\def\imod#1{\allowbreak\mkern10mu({\operator@font mod}\,\,#1)}
\newtheorem{theorem}{Theorem}[section]
\newtheorem*{mainthm}{Main Theorem}
\newtheorem*{mainlemma}{Main Lemma}
\newtheorem{lemma}[theorem]{Lemma}
\newtheorem{claim}[theorem]{Claim}
\newtheorem*{clm}{Claim}
\newtheorem{corollary}[theorem]{Corollary}
\theoremstyle{definition}
\newtheorem{definition}[theorem]{Definition}
\newtheorem{example}[theorem]{Example}
\newtheorem{question}[theorem]{Question}
\newtheorem{context}[theorem]{Context}
\theoremstyle{remark}
\newtheorem{remark}[theorem]{Remark}
\theoremstyle{remark}
\numberwithin{equation}{section}
    \DeclareMathOperator{\dom}{dom}
    \DeclareMathOperator{\rank}{rank}
    \DeclareMathOperator{\Dp}{Dp}
    \DeclareMathOperator{\non}{non}
    \DeclareMathOperator{\add}{add}
    \DeclareMathOperator{\cof}{cof}
    \DeclareMathOperator{\limdir}{limdir}
    \DeclareMathOperator{\cov}{cov}
\newcommand{\restrict}{{\upharpoonright}}
\newcommand{\rest}{{\upharpoonright}}
\newcommand{\frestr}{\!\!\upharpoonright\!\!}
    \newcommand{\thzfc}{\mathrm{ZFC}}
    \newcommand{\la}{\langle}
    \newcommand{\ra}{\rangle}
    \newcommand{\PO}{\mathcal{P}}
    \newcommand{\menos}{\smallsetminus}
    \newcommand{\afrak}{\mathfrak{a}}
    \newcommand{\bfrak}{\mathfrak{b}}
    \newcommand{\cfrak}{\mathfrak{c}}
    \newcommand{\dfrak}{\mathfrak{d}}
    \newcommand{\gfrak}{\mathfrak{g}}
    \newcommand{\pfrak}{\mathfrak{p}}
    \newcommand{\rfrak}{\mathfrak{r}}
    \newcommand{\sfrak}{\mathfrak{s}}
    \newcommand{\ufrak}{\mathfrak{u}}
    \newcommand{\Awf}{\mathcal{A}}
    \newcommand{\Bwf}{\mathcal{B}}
    \newcommand{\Cwf}{\mathcal{C}}
    \newcommand{\Ewf}{\mathcal{E}}
    \newcommand{\Gwf}{\mathcal{G}}
    \newcommand{\Hwf}{\mathcal{H}}
    \newcommand{\Iwf}{\mathcal{I}}
    \newcommand{\Jwf}{\mathcal{J}}
    \newcommand{\Mwf}{\mathcal{M}}
    \newcommand{\Nwf}{\mathcal{N}}
    \newcommand{\Pwf}{\mathcal{P}}
    \newcommand{\Swf}{\mathcal{S}}
    \newcommand{\Bor}{\mathbb{B}}
    \newcommand{\Cor}{\mathbb{C}}
    \newcommand{\Dor}{\mathbb{D}}
    \newcommand{\Loc}{\mathbb{LOC}}
    \newcommand{\Mor}{\mathbb{M}}
    \newcommand{\Por}{\mathbb{P}}
    \newcommand{\Qor}{\mathbb{Q}}
    \newcommand{\Sor}{\mathbb{S}}
    \newcommand{\Qnm}{\dot{\mathbb{Q}}}
    \newcommand{\Snm}{\dot{\mathbb{S}}}
    \newcommand{\Bnm}{\dot{\mathbb{B}}}
    \newcommand{\Locnm}{\dot{\mathbb{LOC}}}
\newcommand{\sii}{{\ \mbox{$\Leftrightarrow$} \ }}
\date{\today}
\begin{document}

\title[Splitting, Bounding and Almost Disjointness can be quite Different]{Splitting, Bounding, and Almost Disjointness can be quite Different}

\author{Vera Fischer}

\address{Institut f\"ur Diskrete Mathematik und Geometrie, Technishe Universit\"at Wien, Wiedner Hauptstrasse 8-10/104, 1040 Wien, Austria}
\curraddr{Kurt G\"odel Research Center, University of Vienna, W\"ahringer Strasse 25, 1090 Vienna, Austria}
\email{vera.fischer@univie.ac.at}

\author[Diego A. Mej\'ia]{Diego Alejandro Mej\'ia}

\address{Institut f\"ur Diskrete Mathematik und Geometrie, Technishe Universit\"at Wien, Wiedner Hauptstrasse 8-10/104, 1040 Wien, Austria}
\curraddr{Department of Mathematics, Shizuoka University, Ohya 836, Shizuoka, 422-8529 Japan}
\email{diego.mejia@shizuoka.ac.jp}

\thanks{The first author thanks the Austrian Science Fund (FWF) for the generous support
through grant no. M1365-N13. The second author thanks the support from FWF through grant no. P23875-N13 and I1272-N25 and the support of the Monbukagakusho (Ministry of Education, Culture, Sports, Science and Technology of Japan) Scholarship.}

\subjclass[2010]{03E17;03E35;03E40}

\keywords{Cardinal characteristics of the continuum; splitting; bounding number; maximal almost-disjoint families; template forcing iterations; isomorphism-of-names}

\begin{abstract} We prove the consistency of $\add(\Nwf)<\cov(\Nwf)<\pfrak=\sfrak=\gfrak<\add(\Mwf)=\cof(\Mwf)<\afrak=\rfrak=\non(\Nwf)=\cfrak$ with $\thzfc$ where each of these cardinal invariants assume arbitrary uncountable regular values.
\end{abstract}

\maketitle


\section{Introduction}\label{SecIntro}

The splitting, the bounding and the almost disjointness numbers, denoted $\mathfrak{s}$, $\mathfrak{b}$ and $\mathfrak{a}$ respectively,
have been of interest for already a long time. The splitting and the bounding numbers, as well as the splitting and the almost disjointness numbers, are independent, while a not difficult ZFC argument shows that $\mathfrak{b}\leq\mathfrak{a}$ (see~\cite{blass}). The consistency of $\mathfrak{s}<\mathfrak{b}=\mathfrak{a}$ holds in the
Hechler model (see~\cite{baudor}). In 1984, introducing the powerful technique of {\emph{creature forcing}}, S. Shelah~\cite{SSCR} obtained a generic extension in which cardinals are not collapsed and $\mathfrak{b}=\aleph_1<\mathfrak{a}=\mathfrak{s}=\aleph_2$. As this is a countable support iteration of proper forcing argument (thus, restricted to force $\cfrak$ at most $\aleph_2$), it remained interest to generalize these results on models of larger continuum, i.e. models of $\mathfrak{c}>\aleph_2$. Almost 15 years later, J. Brendle~\cite{brendle98} showed that
consistently $\mathfrak{b}=\kappa<\mathfrak{a}=\kappa^+$, while in 2008 the first author jointly with J. Stepr{\=a}ns~\cite{VFJS} obtained the consistency of $\mathfrak{b}=\kappa<\mathfrak{s}=\kappa^+$, where $\kappa$ is an arbitrary regular uncountable cardinal. Even though the constructions
can be combined to produce the consistency of $\mathfrak{b}=\kappa<\mathfrak{a}=\mathfrak{s}=\kappa^+$, they can not be further generalized to produce a model in which there is an arbitrarily large spread between the relevant cardinal characteristics.

To show the consistency of $\aleph_1<\mathfrak{d}<\mathfrak{a}$ (without the assumption of a measurable), where $\mathfrak{d}$ is the dominating number,
S. Shelah~\cite{shelah04} introduced a ground-breaking, new technique, known as {\emph{template iterations}}. Since this technique is central to the current paper, we will add a few more lines regarding this construction. In his work, Shelah generalizes the classical fsi (finite support iteration) of Suslin ccc posets to the context of a finite-supported iteration along an arbitrary linear order, where the iteration is constructed from a well-founded structure of subsets of the linear order, known as a \emph{template}. As an application, assuming $\mathrm{CH}$ and $\aleph_1<\mu<\lambda$ regular cardinals with $\lambda^{\aleph_0}=\lambda$, he constructs a template so that the iteration using Hechler forcing (the standard ccc poset adding a dominating real) along this template produces a $\mu$-scale in the extension to get $\bfrak=\dfrak=\mu$ and, on the other hand, by an isomorphism-of-names argument, there are no mad (maximal almost disjoint) families of size between $\mu$ (including it) and $\lambda$ (excluding it), so $\afrak=\cfrak=\lambda$ in the extension (because $\bfrak\leq\afrak$). In this model $\mathfrak{s}=\aleph_1$ and so all of $\mathfrak{s}$, $\mathfrak{b}$ and $\mathfrak{a}$ are distinct in Shelah's template extension. The same consistency result was obtained for $\lambda$ singular with uncountable cofinality and, later, for instances of $\lambda$ of countable cofinality by Brendle \cite{brendle03}.

In \cite{VFJB11}, using a method known as {\emph{matrix iteration}}, the first author jointly with J. Brendle, established the consistency of $\mathfrak{a}=\mathfrak{b}=\kappa<\mathfrak{s}=\lambda$, where $\kappa<\lambda$ are arbitrary regular uncountable cardinals. This result heavily depends on a new method of preserving the maximality of a certain maximal almost disjoint family along such an iteration. In the same paper, it is shown that $\mathfrak{b}=\kappa<\mathfrak{s}=\mathfrak{a}=\lambda$, where $\kappa$ is above a measurable in the ground model, thus generalizing Shelah's creature posets result mentioned earlier. The authors ask if any of the following two constellations $\mathfrak{b}<\mathfrak{a}<\mathfrak{s}$, as well as $\mathfrak{b}<\mathfrak{s}<\mathfrak{a}$ are consistent. Both of those remain very interesting open questions.

As an attempt to get a model of $\aleph_1<\sfrak<\bfrak<\afrak$, the second author~\cite{mejia-temp} introduced the {\emph{iteration of non-definable ccc posets along a template}}. He proved that if $\theta<\kappa<\mu<\lambda$ are uncountable regular cardinals, $\kappa$ is measurable, $\theta^{<\theta}=\theta$ and $\lambda^\kappa=\lambda$, then there is a ccc poset forcing $\sfrak=\pfrak=\gfrak=\theta$, $\bfrak=\dfrak=\mu$ and $\afrak=\cfrak=\lambda$. Also, $\non(\Nwf)=\rfrak=\lambda$ and (by a slight modification of the forcing) $\add(\Nwf)=\cov(\Nwf)=\theta$ hold in the extension.
The forcing construction is a matrix iteration involving parallel template iterations, as in Shelah's original template model, modulo a measurable cardinal.

In this paper we show that consistently $\aleph_1<\mathfrak{s}<\mathfrak{b}<\mathfrak{a}$ without the assumption of a measurable, which solves \cite[Question 8.1]{mejia-temp}. In addition, answering~\cite[Question 8.2]{mejia-temp}, we show that given arbitrary regular uncountable cardinals  $\theta_0<\theta_1<\theta<\mu<\lambda$, there is a ccc generic extension in which  $\add(\Nwf)=\theta_0<\add(\Nwf)=\theta_1<\pfrak=\sfrak=\gfrak=\theta<\add(\Mwf)=\cof(\Mwf)=\mu<\afrak=\rfrak=\non(\Nwf)=\cfrak$.

First, we want to address the consistency of  $\aleph_1<\mathfrak{s}=\theta<\mathfrak{b}=\mu<\mathfrak{a}=\lambda$ (all regular cardinals) without the assumption of a measurable.
Let  $\la L^\lambda,\bar{\Iwf}^\lambda\ra$ denote the template used in Shelah's original consistency proof of $\mathfrak{d}<\mathfrak{a}$. To obtain the
desired constellation, it seems natural to iterate along $\la L^\lambda,\bar{\Iwf}^\lambda\ra$ Hechler forcing for adding a dominating real and Mathias-Prikry posets used to guarantee that $\mathfrak{s}=\theta$. To force $\theta\leq\pfrak \;(\leq\sfrak)$, we use Mathias-Prikry posets (of size $<\theta$) to add a pseudo-intersection to every filter base of size $<\theta$ (by a quite standard counting argument adapted to the context of template iterations). To force $\sfrak\leq\theta$ we aim to preserve a splitting family of size $\theta$ that is generated in some middle step of the iteration (actually, this splitting family is formed by $\theta$-many Cohen reals). The preservation results from \cite[Sect. 5]{mejia-temp} and the fact that Hechler forcing preserves some sort of splitting families (see \cite{baudor}) provide $\aleph_1<\sfrak<\bfrak<\cfrak$. However, with the use of Mathias-Prikry posets, the construction is not uniform enough for an isomorphism of names argument to go through and it is not clear how to provide $\mathfrak{b}<\mathfrak{a}$. Noticing that Shelah's template $\la L^\lambda,\bar{\Iwf}^\lambda\ra$  is not only equipped with a length but with a \emph{width}, we construct a poset by recursion on the width in such a way that small mad families are eliminated at successor steps. To be more precise, for $\delta\leq\lambda$, let $\la L^\delta,\bar{\Iwf}^\delta\ra$ be Shelah's template with width $\delta$ (see Section \ref{SecMain}). We construct an increasing sequence of template iterations (using Hechler forcing and Mathias-Prikry posets) along these templates by recursion on $\delta$. In the successor steps, we expand the iteration along $\la L^\delta,\bar{\Iwf}^\delta\ra$ to an iteration along $\la L^{\delta'},\bar{\Iwf}^{\delta'}\ra$ for some $\delta'\in(\delta,\lambda)$ such that one a.d. (almost disjoint) family of size $\nu\in[\mu,\lambda)$ in the generic extension at $\delta$ is not mad in the generic extension at $\delta'$. By a book-keeping device for these a.d. families, the iteration along $\la L^\lambda,\bar{\Iwf}^\lambda\ra$, being the direct limit of the previous iterations, forces that either $\afrak=\lambda$ or $\afrak<\mu$ (but, as we aim to force $\bfrak=\mu$, the only option would be $\afrak=\lambda$).

In order, to achieve the above recursive construction, we need a better understanding of isomorphims between generalized template iterations, i.e. iterations along a template which involve non-definable iterands (see  Lemma \ref{InnEqv}). It is known that two template iterations of Hechler poset are isomorphic if the template structures are isomorphic (or just innocuously different, as described in Definition \ref{DefInnoc}), which is not the case when non-definable posets are used in the iteration.
In addition we need to work with an extended notion of isomorphism between subsets of the underlying template of generalized template iterations, see Definition~\ref{DefItIsom}.


The previous construction can be modified in a natural way to construct a model of $\aleph_1<\add(\Nwf)<\cov(\Nwf)<\sfrak<\bfrak<\afrak$, but in order to preserve witnesses for $\add(\Nwf)$, $\cov(\Nwf)$ and $\sfrak$ (simultaneously)  we need to further develop some already existing preservation results regarding template iterations. There are two such results, which are of interest for us: Theorems 5.8 and 5.10 from \cite{mejia-temp}. The first of those theorems can not be applied to preserve witnesses of different size along the same iteration, for example, to preserve a witness to $\cov(\Nwf)$ which is smaller than a witness of $\sfrak$. The second theorem can be applied to standard fsi's when they are viewed as template iterations, which is the reason why additional simpler consistency results, including the groupwise density number, $\gfrak$, were obtained in \cite{mejia-temp}. However, we do not know if this second preservation theorem can be applied to obtain the consistency of $\aleph_1<\add(\Nwf)<\cov(\Nwf)<\sfrak<\bfrak<\afrak$ modulo a measurable.  In view of this, one important achievement of this paper is that the second preservation theorem (Theorem \ref{PresTemp2}) works for iterations along Shelah's template, see Lemma 5.8 and Theorem \ref{ThmItPres}.

Relaying on this new preservation theorem, Theorem~\ref{ThmItPres}, we can show that a certain class of template iterations, to which we refer as {\emph{pre-appropriate iterations}} (see Definition~\ref{DefAppr}, parts (1)-(7)), can preserve witnesses for $\add(\Nwf)\leq\theta_0$, $\cov(\Nwf)\leq\theta_1$ and $\sfrak\leq\theta$. In addition, our pre-appropriate iterations force that $\add(\Mwf)=\cof(\Mwf)=\mu$ and $\gfrak=\theta$, the latter by an argument that already appears in \cite{mejia-temp} using Lemma \ref{lemmagfrak} (originally by Blass \cite{Bl}) and Theorem \ref{newrealint}.  Now, by a consequence (Theorem \ref{dfrakbig}) of the first preservation theorem above, we show that in generic extensions obtained via pre-appropriate iterations, $\rfrak=\non(\Nwf)=\cfrak=\lambda$. In addition, we can guarantee that our iterations provide lower bounds for $\add(\Nwf)$, $\cov(\Nwf)$ and $\pfrak \;(\leq\sfrak)$ (see the notion of {\emph{appropriate iteration}} and clauses (8)-(10) of Definition~\ref{DefAppr}). Thus in generic extensions obtained via appropriate iterations, all cardinal characteristics, except the almost disjointness number, have the desired values (see Lemma \ref{ApprLemma}). The methods which provide that $\mathfrak{a}=\lambda$ in our final extension, were already discussed earlier. Thus we can state our main result:

\begin{mainthm}
   Let $\theta_0\leq\theta_1\leq\theta<\mu<\lambda$ be uncountable regular cardinals with $\theta^{<\theta}=\theta$ and $\lambda^{<\lambda}=\lambda$. Then, there is a ccc poset that forces $\add(\Nwf)=\theta_0$, $\cov(\Nwf)=\theta_1$, $\pfrak=\sfrak=\gfrak=\theta$, $\add(\Mwf)=\cof(\Mwf)=\mu$ and $\afrak=\non(\Nwf)=\rfrak=\cfrak=\lambda$.
\end{mainthm}

This paper is structured as follows. Sections \ref{SecPre} and \ref{SecTemp} contain preliminary knowledge of the paper, the latter section presented as a summary of the template iteration theory in \cite[Sect. 3 and 4]{mejia-temp}. Additionally, we discuss in Section \ref{SecTemp} isomorphisms of template iterations. Section \ref{SecSelahTemp} defines Shelah's templates and explains those features, which are useful for our isomorphism-of-names arguments in the context of template iterations with non-definable posets. In Section \ref{SecPresProp} we develop the preservation theory for iterations along Shelah's templates. Section \ref{SecMain} is devoted to the proof of the Main Theorem and Section \ref{SecDisc} contains some open questions.



\section{Preliminaries}\label{SecPre}

\subsection{Classical cardinal invariants}\label{SubSecCard}
This section contains some definitions and basic facts regarding the cardinal characteristics of the continuum which we are to consider. Further information about them can be found, for example, in \cite{judabarto} and \cite{blass}.

For $f,g\in\omega^\omega$, we said that \emph{$f$ is eventually dominated by $g$}, denoted $f\leq^*g$, if for all but finitely many $n$ we have $f(n)\leq g(n)$.
We say that \emph{$f$ is (totally) dominated  by $g$}, denoted $f\leq g$, if for all $n\in\omega$ we have that $f(n)\leq g(n)$.
$D\subseteq\omega^\omega$ is called a \emph{dominating family} if every function in $\omega^\omega$ is dominated by some element of $D$. $\bfrak$, the \emph{(un)bounding number}, is the least size of a subset of $\omega^\omega$ whose elements are not dominated by a single real in $\omega^\omega$. Dually, $\dfrak$, the \emph{dominating number}, is the least size of a dominating family.

For $a,x\in[\omega]^\omega$, we say that \emph{$a$ splits $x$} if both $a\cap x$ and $x\menos a$ are infinite. A subset $S$ of $[\omega]^\omega$ is called a  \emph{splitting family} if any infinite subset of $\omega$ is split by some member of $S$. For $x\in[\omega]^\omega$ and $F\subseteq[\omega]^\omega$, we say that \emph{$x$ reaps $F$} if $x$ splits all elements of $F$. $\sfrak$, the \emph{splitting number}, is defined as the least size of a splitting family. Dually, $\rfrak$, the \emph{reaping number}, is defined as the least size of a subset of $[\omega]^\omega$ that cannot be reaped by a single infinite subset of $\omega$.

A family $A\subseteq[\omega]^\omega$ is said to be \emph{almost disjoint}, abbreviated a.d., if the intersection of any two different members of $A$ is finite. An infinite almost disjoint family is called a \emph{maximal almost disjoint family}, abbreviated mad family, if it is maximal under inclusion among such a.d. families. By $\afrak$ we denote the least size of a mad family and refer to it as the \emph{almost disjointness number}.  Following standard practice, whenever $a,b$ are subset of $\omega$, we denote by $a\subseteq^* b$ the fact that $a\menos b$ is finite. For $C\subseteq[\omega]^{\omega}$ say that $x\in[\omega]^\omega$ is a \emph{pseudo-intersection of $C$} if $x\subseteq^* a$ for any $a\in C$. A family $F\subseteq[\omega]^{\omega}$ is called a \emph{filter base} if it is closed under intersections. The \emph{pseudo-intersection number $\pfrak$} is defined as the least size of a filter base without a pseudo-intersection. The \emph{ultrafilter number $\ufrak$} is defined as the least size of a filter base that generates a non-principal ultrafilter on $\omega$.

A family $\Gwf$ of infinite subsets of $\omega$ is \emph{groupwise-dense} if $\Gwf$ is downward closed under $\subseteq^*$ and, for any interval partition $\langle I_n\rangle_{n<\omega}$ of $\omega$, there exists an $A\in[\omega]^\omega$ such that $\bigcup_{n\in A}I_n\in\Gwf$. The \emph{groupwise-density number} $\gfrak$ is the least size of a family of groupwise-dense sets whose intersection is empty.

For an uncountable Polish space with a continuous\footnote{In the sense that the singletons have measure zero.} Borel probability measure, let $\Mwf$ be the $\sigma$-ideal of meager sets and let $\Nwf$ be the $\sigma$-ideal of null sets. For $\Iwf$ being $\Mwf$ or $\Nwf$, the following cardinal invariants are defined. Note that their values do not depend on the underlying Polish space:
\begin{description}
   \item[$\add(\Iwf)$] \emph{The additivity of $\Iwf$}, which is the least size of a family $F\subseteq\Iwf$ whose union is not in $\Iwf$.
   \item[$\cov(\Iwf)$] \emph{The covering of $\Iwf$}, which is the least size of a family $F\subseteq\Iwf$ whose union covers all the reals.
   \item[$\non(\Iwf)$] \emph{The uniformity of $\Iwf$}, which is the least size of a set of reals not in $\Iwf$.
   \item[$\cof(\Iwf)$] \emph{The cofinality of $\Iwf$}, which is the least size of a cofinal subfamily of $\langle\Iwf,\subseteq\rangle$.
\end{description}
\begin{figure}
\begin{center}
  \includegraphics{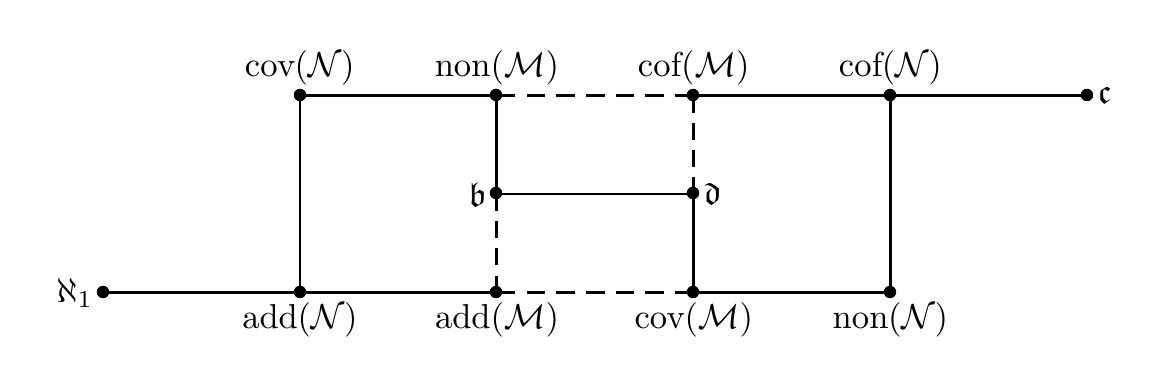}
\caption{Cicho\'n's diagram}
\label{fig:1}
\end{center}
\end{figure}

We will use the following characterizations of $\add(\Nwf)$ and $\cof(\Nwf)$ (see \cite[Thm. 2.3.9]{judabarto}). Recall that a function $\psi:\omega\to[\omega]^{<\omega}$ is called a \emph{slalom}. For $x\in\omega^\omega$ and a slalom $\psi$, we say that \emph{$\psi$ localizes $x$}, denoted $x\in^*\psi$ if for all but finitely many $n$, $x(n)\in\psi(n)$. For a function $h:\omega\to\omega$, denote by $S(\omega,h)$ the set of all slaloms $\psi$ such that $|\psi(n)|\leq h(n)$ for all $n$. If $h(n)$ goes to infinity, then $\add(\Nwf)$ is the least size of a family of reals in $\omega^\omega$ that cannot be localized by a single slalom in $S(\omega,h)$ and, dually, $\cof(\Nwf)$ is the least size of a family of slaloms $S\subseteq S(\omega,h)$ such that any real in $\omega^\omega$ is localized by some slalom in $S$.

The well known Cicho\'n's diagram (Figure \ref{fig:1}) illustrates all provable (in $\mathrm{ZFC}$) inequalities between the relevant cardinal characteristics. The vertical lines from bottom to top and horizontal lines from left to right represent $\leq$. Also, the dotted lines mean $\add(\Mwf)=\min\{\bfrak,\cov(\Mwf)\}$ and $\cof(\Mwf)=\max\{\dfrak,\non(\Mwf)\}$. In addition we have $\pfrak\leq\add(\Mwf)$, $\pfrak\leq\sfrak$, $\pfrak\leq\gfrak$, $\sfrak\leq\dfrak$, $\gfrak\leq\dfrak$, $\bfrak\leq\afrak$, $\bfrak\leq\rfrak$, $\sfrak\leq\non(\Iwf)$, $\cov(\Iwf)\leq\rfrak$ (where $\Iwf$ is $\Mwf$ or $\Nwf$) and $\rfrak\leq\ufrak$. Note that the characteristics $\add(\Nwf)$, $\add(\Mwf)$, $\bfrak$, $\pfrak$ and $\gfrak$ are regular, and that there are no other ZFC provable inequalities between these invariants.

The following results is a very useful tool for consistency results about $\gfrak$.

\begin{lemma}[Blass {\cite[Thm. 2]{Bl}},  see also {\cite[Lemma 1.17]{Br-Suslin}}]\label{lemmagfrak}
   If $\langle W_\alpha\rangle_{\alpha\leq\theta}$ an increasing sequence of transitive models of $\thzfc$ such that
   \begin{enumerate}[(i)]
      \item $[\omega]^\omega\cap(W_{\alpha+1}\menos W_{\alpha})\neq\emptyset$,
      \item $\langle[\omega]^\omega\cap W_\alpha\rangle_{\alpha<\theta}\in W_\theta$ and
      \item $[\omega]^\omega\cap W_\theta=\bigcup_{\alpha<\theta}[\omega]^\omega\cap W_\alpha$.
   \end{enumerate}
   Then, in $W_\theta$, $\gfrak\leq\theta$.
\end{lemma}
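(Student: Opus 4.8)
The plan is to prove $\gfrak \leq \theta$ in $W_\theta$ by exhibiting a family of $\theta$-many groupwise-dense sets whose intersection is empty. Working in $W_\theta$, fix for each $\alpha < \theta$ a real $x_\alpha \in [\omega]^\omega \cap (W_{\alpha+1} \setminus W_\alpha)$, using hypothesis (i); the sequence $\langle x_\alpha \rangle_{\alpha < \theta}$ lives in $W_\theta$ by (ii). For each $\alpha<\theta$, I would consider the set
\[
\Gwf_\alpha = \{ A \in [\omega]^\omega : \text{there is no } y \in [\omega]^\omega \cap W_\alpha \text{ with } A \subseteq^* y \text{ and } y \subseteq^* x_\alpha\},
\]
or, more in the spirit of Blass's original argument, I would instead let $\Gwf_\alpha$ be the collection of $A \in [\omega]^\omega$ such that no real in $W_\alpha$ contains $\bigcup_{n \in A} I_n$ for the canonical interval partition coded by $A$ — but let me describe the cleaner route. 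For each $\alpha$, define $\Gwf_\alpha$ to be the set of $A\in[\omega]^\omega$ such that $A$ does \emph{not} almost-contain any ground-model-at-$\alpha$ pseudointersection of the filter generated by $x_\alpha$ together with the cofinite sets; equivalently, the complement of the $\subseteq^*$-downward closure (relativized suitably) of $[\omega]^\omega\cap W_\alpha$. The point is to arrange that $\Gwf_\alpha$ is groupwise-dense in $W_\theta$, that $\Gwf_\alpha$ contains no real of $W_\alpha$, and that $x_{\alpha}\in\Gwf_\beta$ fails to persist, so that $\bigcap_{\alpha<\theta}\Gwf_\alpha=\emptyset$.

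Concretely, here is the construction I would carry out. For $A\in[\omega]^\omega$ write $A=\{a_0<a_1<\cdots\}$ and let $I^A_n=[a_n,a_{n+1})$, so $\langle I^A_n\rangle$ is an interval partition. For each $\alpha<\theta$ set
\[
\Gwf_\alpha = \left\{ A\in[\omega]^\omega : \text{for every } B\in[\omega]^\omega, \text{ if } \bigcup_{n\in B} I^A_n \in W_\alpha \text{ then } \bigcup_{n\in B} I^A_n \subseteq^* x_\alpha \text{ or } \subseteq^* (\omega\setminus x_\alpha)\right\}^c,
\]
— again this is fiddly; the essential features to verify are: (a) each $\Gwf_\alpha$ is $\subseteq^*$-downward closed; (b) each $\Gwf_\alpha$ meets every interval partition in $W_\theta$, i.e. given any interval partition $\langle J_k\rangle\in W_\theta$ there is $C\in[\omega]^\omega$ with $\bigcup_{k\in C}J_k\in\Gwf_\alpha$ — this uses that $W_\alpha$ does not contain every real, so one can thin out along $\langle J_k\rangle$ to avoid all $W_\alpha$-reals via a diagonal/genericity argument (this is exactly where Blass's interval-partition trick and the real $x_\alpha\notin W_\alpha$ enter); (c) no real of $W_\alpha$ belongs to $\Gwf_\alpha$, which should be immediate from the definition since a $W_\alpha$-real is trivially almost-contained in itself; and (d) given any $A\in[\omega]^\omega\cap W_\theta$, by hypothesis (iii) we have $A\in W_\alpha$ for some $\alpha<\theta$, hence $A\notin\Gwf_\alpha$ by (c), so $\bigcap_{\alpha<\theta}\Gwf_\alpha=\emptyset$.

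Thus the family $\{\Gwf_\alpha : \alpha<\theta\}$ is a collection of $\theta$-many groupwise-dense subsets of $[\omega]^\omega$ with empty intersection, all definable inside $W_\theta$ from the sequences provided by (ii) and (iii), which yields $\gfrak\leq\theta$ in $W_\theta$.

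The main obstacle, and the step deserving the most care, is clause (b): showing that each $\Gwf_\alpha$ is genuinely groupwise-dense, i.e. that for an arbitrary interval partition $\langle J_k\rangle_{k<\omega}\in W_\theta$ one can select $C\in[\omega]^\omega$ with $\bigcup_{k\in C}J_k$ avoiding membership in (the downward closure of) $[\omega]^\omega\cap W_\alpha$. This is where one must genuinely use that $x_\alpha\in W_{\alpha+1}\setminus W_\alpha$ rather than merely that $W_\alpha\neq W_\theta$: the real $x_\alpha$ should be used to "code" the required selection of blocks, typically by letting $C$ be read off from $x_\alpha$ relative to $\langle J_k\rangle$, and then a short argument shows $\bigcup_{k\in C}J_k$ cannot be almost-contained in any $W_\alpha$-real, because otherwise $x_\alpha$ (or enough of its information) would be recoverable inside $W_\alpha$, contradicting $x_\alpha\notin W_\alpha$. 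Since this is precisely the content of Blass's Theorem 2 and the cited Lemma 1.17 of \cite{Br-Suslin}, I would either reproduce that argument or, more economically, reduce directly to it by noting that hypotheses (i)–(iii) are exactly the hypotheses under which that theorem is stated; the only novelty here would be checking that our $W_\alpha$'s (models arising as intermediate template-iteration extensions) satisfy those hypotheses, which (ii) and (iii) guarantee.
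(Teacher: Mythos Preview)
The paper does not give its own proof of this lemma: it is stated with attribution to Blass \cite{Bl} (see also \cite{Br-Suslin}) and used as a black box. So there is nothing in the paper to compare your argument against; your option of simply citing the result is exactly what the authors do.

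That said, as a proof sketch your write-up has a genuine gap: you never actually commit to a definition of $\Gwf_\alpha$. You float three or four candidates, cross most of them out yourself, and the one you display most explicitly (the set with the superscript $c$) is neither clearly well-defined nor clearly $\subseteq^*$-downward closed. The clean version of Blass's argument takes $\Gwf_\alpha=\{A\in[\omega]^\omega:\text{no }B\in[\omega]^\omega\cap W_\alpha\text{ satisfies }B\subseteq^* A\}$; downward closure and empty intersection (via (iii)) are then immediate, and the only real work is, as you correctly identify, the groupwise-density clause. For that one fixes an interval partition $\langle I_n\rangle$, uses $x_\alpha\notin W_\alpha$ to build $C\in[\omega]^\omega$ coding $x_\alpha$ relative to $\langle I_n\rangle$, and argues that if some $B\in W_\alpha$ were almost contained in $\bigcup_{n\in C}I_n$ then $x_\alpha$ could be recovered in $W_\alpha$. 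Your last two paragraphs describe this shape accurately, but if you intend to present a proof rather than a citation you should write it out with a fixed $\Gwf_\alpha$ rather than leaving the definition in flux.
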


\subsection{Forcing theory}\label{SubSecForcing} Excellent references for the theory of forcing are \cite{judabarto}, \cite{jech} and \cite{kunen11}.

Let $\mathbb{P}$  and $\mathbb{Q}$ be partial orders. Then $\mathbb{P}$ is said to be a \emph{subposet}  of $\mathbb{Q}$ if $\Por\subseteq\Qor$ (as partial orders) and incompatibilities are preserved, that is whenever $p\perp_{\Por} q$ (that is, there is no condition in $\Por$ stronger than both $p$ and $q$) then $p\perp_{\Qor}q$. We say that \emph{$\Por$ is a complete suborder, also complete subposet of $\Qor$}, which we denote $\Por\lessdot\Qor$, if $\Por$ is a subposet of $\Qor$ and every maximal antichain of $\mathbb{P}$ is a maximal antichain of $\mathbb{Q}$. If $M$ is a transitive model of $\thzfc$ and $\Por\in M$, then $\Por\lessdot_M\Qor$ denotes the fact that $\Por$ is a subposet of $\Qor$ and every maximal antichain $A$ of $\Por$ which is an element of $M$ is a maximal antichain of $\Qor$.

\begin{definition}[Mathias-Prikry type forcing]\label{DefMatLavUf}
   Let $F$ be a filter subbase. \emph{Mathias-Prikry forcing with $F$} is the poset $\Mor_F$ consisting of all pairs $(s,a)$ such that  $s\in[\omega]^{<\omega}$,  $a\in F$ and  $\sup(s+1)\leq\min(a)$ where $s+1=\{k+1 : k\in s\}$, and ordered by $(t,b)\leq(s,a)$ iff $s\subseteq t$, $b\subseteq a$ and $t\menos s\subseteq a$.
\end{definition}

$\Mor_F$ is $\sigma$-centered. It adds a pseudo-intersection of $F$ which is often referred to as the \emph{Mathias-Prikry real added by $\Mor_{F}$}.

\begin{definition}[Suslin ccc poset]\label{DefSuslinposet}
   A \emph{Suslin ccc poset} $\Sor$ is a ccc poset, whose conditions are reals (in some fixed uncountable Polish space) such that the relations $\leq$ and $\perp$ are $\boldsymbol{\Sigma}_1^1$.
\end{definition}

If $\Sor$ is a Suslin ccc poset then $\Sor$ itself has a $\boldsymbol{\Sigma}_1^1$-definition, because $x\in\Sor$ iff $x\leq x$. Also, if $M\subseteq N$ are transitive models of $\thzfc$ and $\Sor$ is coded in $M$, then $\Sor^M\lessdot_M\Sor^N$.

\begin{definition}[{\cite{brendle05}}]\label{DefSuslinLinked}
   Let $\Sor$ be a Suslin ccc poset.
   \begin{enumerate}[(1)]
      \item $\Sor$ is \emph{Suslin $\sigma$-linked} if there exists a sequence $\{S_n\}_{n<\omega}$ of 2-linked subsets of $\Sor$ such that the statement ``$x\in S_n$" is $\boldsymbol{\Sigma}^1_1$. Note that the statement ``$S_n$ is 2-linked" is $\boldsymbol{\Pi}_1^1$.
      \item $\Sor$ is \emph{Suslin $\sigma$-centered} if there exists a sequence $\{S_n\}_{n<\omega}$ of centered subsets of $\Sor$ such that the statement ``$x\in S_n$" is $\boldsymbol{\Sigma}^1_1$. Note that the statement ``$S_n$ is centered" is $\boldsymbol{\Pi}_2^1$, since the statement ``$p_0,\ldots,p_l$ have a common stronger condition in $\Sor$" is $\boldsymbol{\Sigma}^1_1$.
   \end{enumerate}
\end{definition}

The following are well known examples of Suslin ccc notions, which will be used in our applications. Their order and incompatibility relations are Borel.
\begin{itemize}
   \item \emph{Cohen forcing $\Cor$}.
   \item \emph{Random forcing $\Bor$}.
   \item \emph{Hechler forcing $\Dor$}, the canonical ccc forcing that adds a dominating real.
   \item Let $h:\omega\to\omega$ non-decreasing and converging to infinity. $\Loc^h$, the \emph{localization forcing at $h$}, consists of conditions of the form $(s,F)$ where $s\in\prod_{i<n}[\omega]^{\leq h(i)}$ and $F\in[\omega^\omega]^{\leq h(n)}$ for some $n<\omega$. The order is $(s',F')\leq(s,F)$ iff $s\subseteq s'$, $F\subseteq F'$ and $\{x(i) : x\in F\}\subseteq s'(i)$ for all $i\in|s'|\menos|s|$. $\Loc:=\Loc^{id}$ where $id:\omega\to\omega$ is the identity function.
\end{itemize}
 Moreover $\Cor$ and $\Dor$ are Suslin $\sigma$-centered, while $\Loc^h$ and $\Bor$ are Suslin $\sigma$-linked. For each of these posets the statement ``$p_0,\ldots,p_l$ have a common stronger condition" is Borel. Then for any $\boldsymbol{\Sigma}^1_1$-subset $S$ of such a poset, the statement ``$S$ is centered" is $\boldsymbol{\Pi}_1^1$.

The notion of correctness, which we state below and which is introduced by Brendle in~\cite{brendle05}, is essential for the construction of template
iterations.

\begin{definition}[Correct diagram of posets {\cite[Def. 1.1]{brendle05}}]\label{DefCorr}
   For $i=0,1$, let $\Por_i$ and $\Qor_i$ be posets. If $\Por_i\lessdot\Qor_i$ for $i=0,1$, $\Por_0\lessdot\Por_1$ and $\Qor_0\lessdot\Qor_1$, say that the diagram $\langle\Por_0,\Por_1,\Qor_0,\Qor_1\rangle$ (see Figure \ref{fig:2})  is \emph{correct} if for each $q\in\Qor_0$ and $p\in\Por_1$, if they have a common reduction in $\Por_0$, then they are compatible in $\Qor_1$. An equivalent formulation is that, whenever $p_0\in\Por_0$ is a reduction of $p_1\in\Por_1$, then $p_0$ is a reduction of $p_1$ with respect to $\Qor_0,\Qor_1$.
   \begin{figure}
     \begin{center}
         \includegraphics{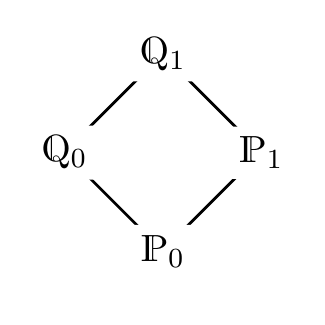}
         \caption{Diagram of posets}
         \label{fig:2}
     \end{center}
   \end{figure}
\end{definition}

\begin{definition}[{\cite{brendle05}}]\label{DefSuslincorr}
 A Suslin ccc poset $\Sor$ is \emph{correctness-preserving} if, given a correct diagram $\langle\Por_0,\Por_1,\Qor_0,\Qor_1\rangle$, the diagram $\langle\Por_0\ast\Snm^{V^{\Por_0}},\Por_1\ast\Snm^{V^{\Por_1}},\Qor_0\ast\Snm^{V^{\Qor_0}},
 \Qor_1\ast\Snm^{V^{\Qor_1}}\rangle$ is also correct.
\end{definition}

Brendle showed that all Suslin ccc posets listed above are correctness-preserving (see~\cite{brendle05,Br-shat}). In addition, he conjectured that any Suslin ccc poset is correctness-preserving, which is still open.


\section{Template iterations}\label{SecTemp}

This section contains definitions of template and template iterations, as well as a discussion of some of their basic properties. The exposition follows
\cite{mejia-temp}. Proofs of all statements can be found in \cite{mejia-temp}, \cite{brendle02,brendle05}.


For a linear order $L:=\langle L,\leq_L\rangle$ and $x\in L$, denote $L_x:=\left\{z\in L : z<x\right\}$.

\begin{definition}[Indexed template]\label{DefIndxTemp}
An \emph{indexed template} (or just a \emph{template}) is a pair $\langle L,\bar{\Iwf}:=\langle\Iwf_x\rangle_{x\in L}\rangle$ where $L$ is a linear order, $\Iwf_x\subseteq\Pwf(L_x)$ for all $x\in L$ and the following properties are satisfied:
\begin{enumerate}[(1)]
     \item $\emptyset\in\Iwf_x$.
     \item $\Iwf_x$ is closed under finite unions and intersections.
     \item If $z<x$ then there is some $A\in\Iwf_x$ such that $z\in A$.
     \item $\Iwf_x\subseteq\Iwf_y$ if $x<y$.
     \item $\Iwf(L):=\bigcup_{x\in L}\Iwf_x\cup\{L\}$ is well-founded by the subset relation.
  \end{enumerate}
   For $A\subseteq L$ and $x\in L$, $\Iwf_x\frestr A:=\left\{A\cap X : X\in\Iwf_x\right\}$ is the \emph{trace of $\Iwf_x$ on $A$}. Let $\bar{\Iwf}\frestr A:=\langle\Iwf_x\frestr A\rangle_{x\in A}$ and\footnote{$\Iwf(A)\subseteq\Iwf(L)\frestr A=\{A\cap X : X\in\Iwf(L)\}$ but equality may not hold.} $\Iwf(A):=\bigcup_{x\in A}\Iwf_x\frestr A\cup\{A\}$.
\end{definition}

If $X\subseteq A\subseteq L$, then $(\Iwf_x\frestr A)\frestr X=\Iwf_x\frestr X$ for any $x\in L$; $(\bar{\Iwf}\frestr A)\frestr X=\bar{\Iwf}\frestr X$ and $(\Iwf(A))(X)=\Iwf(X)$. As $\langle A,\bar{\Iwf}\frestr A\rangle$ is an indexed template for any $A\subseteq L$, we can define $\Dp^{\bar{\Iwf}}:\PO(L)\to\mathbf{ON}$ by $\Dp^{\bar{\Iwf}}(X):=\rank_{\Iwf(X)}(X)$. Although this is not a rank function on $\PO(L)$, we will use induction on $\alpha=\Dp^{\bar{\Iwf}}(X)$ to construct an iteration along $\langle L,\bar{\Iwf}\rangle$. When the template is clear from the context, we just denote $\mathrm{Dp}:=\mathrm{Dp}^{\bar{\Iwf}}$.

\begin{lemma}[{\cite[Lemma 3.3]{mejia-temp}}]\label{UpsilonTemp}
   Fix $A\subseteq L$. $\mathrm{Dp}:=\mathrm{Dp}^{\bar{\Iwf}}$ has the following properties.
   \begin{enumerate}[(a)]
      \item If $Y\in\Iwf(A)$, then $\mathrm{Dp}(Y)\leq\mathrm{rank}_{\Iwf(A)}(Y)$.
      \item If $X\subseteq A$ then $\mathrm{Dp}(X)\leq\mathrm{Dp}(A)$.
      \item Let $x\in A$. If $Y\subsetneq A\cap(L_x\cup\{x\})$ and $Y\cap L_x\in\Iwf_x\frestr A$ then $\mathrm{Dp}(Y)<\mathrm{Dp}(A)$. In particular, $\mathrm{Dp}(X)<\mathrm{Dp}(A)$ for all $X\in\Iwf_x\frestr A$
      \item $\mathrm{Dp}^{\bar{\Iwf}\upharpoonright A}=\mathrm{Dp}\frestr\PO(A)$.
   \end{enumerate}
\end{lemma}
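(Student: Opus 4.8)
The uniform tool behind all four parts is the elementary observation that a strictly monotone map between well‑founded partial orders cannot decrease rank: if $f\colon\langle P,{<_P}\rangle\to\langle Q,{<_Q}\rangle$ satisfies $p'<_P p\Rightarrow f(p')<_Q f(p)$ and both are well‑founded, then $\rank_P(p)\le\rank_Q(f(p))$ for every $p$, as one checks by induction on $\rank_P(p)$. Part~(d) needs nothing more: by the transitivity of restriction recorded before the lemma, $(\Iwf_x\restrict A)\restrict X=\Iwf_x\restrict X$ whenever $X\subseteq A$, so the poset whose rank defines $\Dp^{\bar{\Iwf}\restrict A}(X)$ is literally $\Iwf(X)$, and hence $\Dp^{\bar{\Iwf}\restrict A}(X)=\rank_{\Iwf(X)}(X)=\Dp(X)$.

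For (a) and (b) I would first note that $\Iwf(A)$ is closed under arbitrary nonempty intersections. Closure under finite intersections follows from clauses (2) and (4) of Definition~\ref{DefIndxTemp}, using $(A\cap V_1)\cap(A\cap V_2)=A\cap(V_1\cap V_2)$ and $W\cap A=W$; then, since $\Iwf(A)$ is well‑founded under $\subseteq$ (clause (5) for the template $\langle A,\bar{\Iwf}\restrict A\rangle$), any nonempty $\Fwf\subseteq\Iwf(A)$ has a $\subseteq$‑minimal finite subintersection, which by minimality is contained in every member of $\Fwf$ and therefore equals $\bigcap\Fwf$. As $A\in\Iwf(A)$, every $Z\subseteq A$ then has a $\subseteq$‑least superset $\overline Z\in\Iwf(A)$. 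Fix $X\subseteq A$ and define $\iota\colon\Iwf(X)\to\Iwf(A)$ by $\iota(Z)=\overline Z$; this is monotone, and in fact strictly monotone: if $Z'\subsetneq Z$ in $\Iwf(X)$ then $Z'$ is not the top element $X$ of $\Iwf(X)$, so $Z'=X\cap V'$ with $V'\in\Iwf_{x'}$ for some $x'\in X$; choosing $p\in Z\setminus Z'$ we get $p\in X\setminus V'$, hence $p\notin A\cap V'$ although $Z'\subseteq A\cap V'\in\Iwf(A)$, so $\overline{Z'}\subseteq A\cap V'$ omits $p$ while $p\in Z\subseteq\overline Z$, giving $\overline{Z'}\subsetneq\overline Z$. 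Applying the rank tool to $\iota$ at the top element $X$ yields $\Dp(X)=\rank_{\Iwf(X)}(X)\le\rank_{\Iwf(A)}(\overline X)\le\rank_{\Iwf(A)}(A)=\Dp(A)$, which is (b); taking $X:=Y$, for which $\overline Y=Y$ because $Y\in\Iwf(A)$, gives $\Dp(Y)\le\rank_{\Iwf(A)}(Y)$, which is (a).

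For (c) I would split on whether $x\in Y$. If $x\notin Y$ then $Y=Y\cap L_x\in\Iwf_x\restrict A\subseteq\Iwf(A)$ and $x\in A\setminus Y$ forces $Y\subsetneq A$, so by (a) $\Dp(Y)\le\rank_{\Iwf(A)}(Y)<\rank_{\Iwf(A)}(A)=\Dp(A)$; the ``in particular'' clause is exactly this case applied to $Y:=X$. If $x\in Y$ then $x=\max Y$ (as $Y\subseteq L_x\cup\{x\}$), so $\Iwf(Y)=(\Iwf_x\restrict Y)\cup\{Y\}$ by clause~(4), and a routine trace computation from the hypothesis $W_0:=Y\cap L_x\in\Iwf_x\restrict A$ shows $W_0\in\Iwf_x\restrict Y$; since every member of $\Iwf_x\restrict Y$ is $\subseteq Y\cap L_x=W_0$, it is the maximum of $\Iwf_x\restrict Y$, whence $\Dp(Y)=\rank_{\Iwf(Y)}(W_0)+1$. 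Strict monotonicity of $\iota\colon\Iwf(Y)\to\Iwf(A)$ together with $\iota(W_0)=W_0$ (as $W_0\in\Iwf(A)$) gives $\rank_{\Iwf(Y)}(W_0)\le\rank_{\Iwf(A)}(W_0)$, so $\Dp(Y)\le\rank_{\Iwf(A)}(W_0)+1$. Finally, from $Y\subsetneq A\cap(L_x\cup\{x\})$ and $x\in Y$ there is $q\in(A\cap L_x)\setminus Y$, and clause~(3) of Definition~\ref{DefIndxTemp} supplies $B\in\Iwf_x$ with $q\in B$; writing $W_0=A\cap V_0$ with $V_0\in\Iwf_x$, the set $Z:=W_0\cup(A\cap B)=A\cap(V_0\cup B)\in\Iwf_x\restrict A\subseteq\Iwf(A)$ satisfies $W_0\subsetneq Z$ (it contains $q\notin W_0$) and $Z\subseteq L_x\not\ni x$, so $Z\subsetneq A$. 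Consequently $\Dp(A)=\rank_{\Iwf(A)}(A)\ge\rank_{\Iwf(A)}(Z)+1\ge\rank_{\Iwf(A)}(W_0)+2>\rank_{\Iwf(A)}(W_0)+1\ge\Dp(Y)$.

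The one genuinely delicate point is the strict inequality in (c): (b) immediately gives $\Dp(Y)\le\Dp(A)$, and obtaining the separating ``$+1$'' requires producing a member $Z$ of $\Iwf(A)$ with $W_0\subsetneq Z\subsetneq A$, which is precisely what template clause~(3) is for; all the remaining work is the rank tool plus routine manipulation of traces.
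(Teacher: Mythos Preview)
The paper does not actually prove this lemma; it is stated with a citation to \cite[Lemma 3.3]{mejia-temp} and no argument is given. So there is no proof in the present paper to compare against. That said, your argument is correct and self-contained.

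A couple of remarks on the details. Your key device---the closure operator $Z\mapsto\overline Z$ sending a subset of $A$ to its least superset in $\Iwf(A)$---is a clean way to package (a) and (b) simultaneously; the point that $\Iwf(A)$ is closed under arbitrary nonempty intersections (via finite closure plus well-foundedness) is exactly what one needs, and your justification is right. The strict-monotonicity verification for $\iota$ hinges on the fact that any non-top element $Z'$ of $\Iwf(X)$ has the form $X\cap V'$ with $V'\in\Iwf_{x'}$ for some $x'\in X\subseteq A$, so that $A\cap V'\in\Iwf(A)$ separates $\overline{Z'}$ from $\overline Z$; this is the correct use of the template axioms. In (c), your case split is the natural one, and the crucial step---manufacturing $Z=A\cap(V_0\cup B)\in\Iwf_x\frestr A$ with $W_0\subsetneq Z\subsetneq A$ to obtain the extra ``$+1$''---uses exactly clauses (2) and (3) of Definition~\ref{DefIndxTemp}. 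Your observation that $\Iwf(Y)=(\Iwf_x\frestr Y)\cup\{Y\}$ when $x=\max Y$, and that $W_0=Y\cap L_x$ is the maximum of $\Iwf_x\frestr Y$, is correct and makes $\Dp(Y)=\rank_{\Iwf(Y)}(W_0)+1$ immediate.

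In short: the proof is valid, and your closing comment correctly identifies clause~(3) as the source of the strict inequality in (c).
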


Given an indexed template $\langle L,\bar{\Iwf}\rangle$ and $x\in L$, define $\hat{\Iwf}_x=\{B\subseteq L :(\exists{H\in\Iwf_x})(B\subseteq H)\}$. Thus $\hat{\Iwf}_x$ is the ideal on $\PO(L_x)$ generated by $\Iwf_x$ (which might be trivial). Note that $\hat{\Iwf}_x$ contains all the finite subsets of $L_x$ and that $B\in\hat{\Iwf}_x$ iff $B\in\Iwf_x\frestr(B\cup\{x\})$ for any $B\subseteq L_x$.

\begin{theorem}[Iteration along a template]\label{TempIt}
   Given a template $\langle L,\bar{\Iwf}\rangle$, a partial order $\Por\frestr A$ is defined by recursion on $\alpha=\mathrm{Dp}(A)$ for all $A\subseteq L$ as follows:
   \begin{enumerate}[(1)]
      \item For $x\in L$ and $B\in\hat{\Iwf}_x$, $\Qnm^B_x$ is a $\Por\frestr B$-name of a poset. The following conditions should hold.
            \begin{enumerate}[(i)]
               \item If $E\subseteq B$ and $\Por\frestr E\lessdot\Por\frestr B$, then $\Vdash_{\Por\upharpoonright B}\Qnm_x^E\lessdot_{V^{\Por\upharpoonright E}}\Qnm_x^B$.
               \item If $E\in\hat{\Iwf}_x$ such that $\Por\frestr(B\cap E)$ is a complete subposet of both $\Por\frestr B$ and $\Por\frestr E$ and if $\dot{q}$ is a $\Por\frestr(B\cap E)$-name such that $\Vdash_{\Por\upharpoonright E}\dot{q}\in\Qnm_x^E$                     and $\Vdash_{\Por\upharpoonright B}\dot{q}\in\Qnm_x^B$, then $\Vdash_{\Por\upharpoonright(B\cap E)}\dot{q}\in\Qnm_x^{B\cap E}$.
               \item If $B',D\subseteq B$ and $\langle\Por\frestr(B'\cap D),\Por\frestr B',\Por\frestr D,\Por\frestr B\rangle$ is a correct diagram, then the diagram ${\langle\Por\frestr(B'\cap D)\ast\Qnm_x^{B'\cap D}},\Por\frestr B'\ast\Qnm_x^{B'},\Por\frestr D\ast\Qnm_x^D,\Por\frestr B\ast\Qnm_x^B\rangle$ is correct.
            \end{enumerate}
      \item The partial order $\Por\frestr A$ is defined as follows.
            \begin{enumerate}[(i)]
               \item $\Por\frestr A$ consists of all finite partial functions $p$ with domain contained in $A$ such that $p=\emptyset$ or if $|p|>0$ and $x=\max(\dom p)$, then there exists a $B\in\Iwf_x\frestr A$ such that $p\frestr L_x\in\Por\frestr B$ and $p(x)$ is a $\Por\frestr B$-name for a condition in $\Qnm_x^B$.
               \item The order on $\Por\frestr A$ is given by: $q\leq_A p$ if $\dom p\subseteq\dom q$ and either $p=\emptyset$ or when $p\neq0$ and $x=\max(\dom q)$ then there is a $B\in\Iwf_x\frestr A$ such that $q\frestr L_x\in\Por\frestr B$ and, either $x\notin\dom p$, $p\in\Por\frestr B$ and $q\frestr L_x\leq_B p$, or $x\in\dom p$, $p\frestr L_x\in\Por\frestr B$, $q\frestr L_x\leq_B p\frestr L_x$ and $p(x),q(x)$ are $\Por\frestr B$-names for conditions in $\Qnm_x^B$ such that $q\frestr L_x\Vdash_{\Por\upharpoonright B}q(x)\leq p(x)$.
            \end{enumerate}
   \end{enumerate}
   Within this recursive definition, the following properties are proved:
   \begin{enumerate}[(a)]
      \item If $p\in\Por\frestr A$, $x\in A$ and $\max(\dom p)<x$, then there exists $B\in\Iwf_x\frestr A$ such that $p\in\Por\frestr B$.
      \item For $D\subseteq A$, $\Por\frestr D\subseteq\Por\frestr A$ and for $p,q\in\Por\frestr D$, $q\leq_D p$ iff $q\leq_A p$.
      \item $\Por\frestr A$ is a poset.
      \item $\Por\frestr A$ is obtained from posets of the form $\Por\frestr B$ with $B\subsetneq A$ in the following way:
            \begin{enumerate}[(i)]
               \item If $x=\max(A)$ exists and $A_x:=A\cap L_x\in\hat{\Iwf}_x$, then $\Por\frestr A=\Por\frestr A_x\ast\Qnm_x^{A_x}$.
               \item If $x=\max(A)$ but $A_x\notin\hat{\Iwf}_x$, then $\Por\frestr A$ is the direct limit of the $\Por\frestr B$ where
                     $B\subseteq A$ and $B\cap L_x\in\Iwf_x\frestr A$.
               \item If $A$ does not have a maximum element, then $\Por\frestr A$ is the direct limit of the $\Por\frestr B$ where $B\in\Iwf_x\frestr A$ for some $x\in A$ (in the case $A=\emptyset$, it is clear that $\Por\frestr A=\mathds{1}$).
            \end{enumerate}
            Note that by Lemma \ref{UpsilonTemp}(c) we have $\mathrm{Dp}(A_x)<\mathrm{Dp}(A)$ in (i) and, in (ii) and (iii) we have $\mathrm{Dp}(B)<\mathrm{Dp}(A)$ for each corresponding $B$.
      \item If $D\subseteq A$, then $\Por\frestr D\lessdot\Por\frestr A$.
      \item If $D\subseteq L$ then $\Por\frestr(A\cap D)=\Por\frestr A\cap\Por\frestr D$.
      \item If $D,A'\subseteq A$ then $\langle\Por\frestr(A'\cap D),\Por\frestr A',\Por\frestr D,\Por\frestr A\rangle$ is a correct diagram.
   \end{enumerate}
\end{theorem}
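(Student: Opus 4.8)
The plan is a single transfinite recursion on $\alpha=\mathrm{Dp}(A)$: at stage $\alpha$ one simultaneously defines $\Por\restrict A$ by clause (2) for every $A\subseteq L$ with $\mathrm{Dp}(A)=\alpha$ and verifies (a)--(g) for those $A$. This recursion is well posed because every object appearing on the right of (2) --- the posets $\Por\restrict B$, the iterands $\Qnm_x^B$ for $B\in\Iwf_x\restrict A$, and $\Por\restrict A_x$, $\Qnm_x^{A_x}$ in case (d)(i) --- involves a set $B$ with $\mathrm{Dp}(B)<\alpha$; this is Lemma~\ref{UpsilonTemp}(c) together with the remark following the theorem, and Lemma~\ref{UpsilonTemp}(d) lets one identify the depth function of a sub-template $\langle D,\bar{\Iwf}\restrict D\rangle$ with the ambient one, so the inductive hypotheses are available for all subsets of $A$. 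The base case $A=\emptyset$ --- the only set of depth $0$, since $\emptyset\in\Iwf_x\restrict A$ as soon as $A\neq\emptyset$ --- gives $\Por\restrict\emptyset=\mathds 1$, and (a)--(g) are vacuous.

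In the successor step I would dispatch (a)--(g) in a fixed order. First (a), (b) and (f): each of these, for a set of depth $\alpha$, consumes only the inductive hypotheses (a), (b), (e), (f) at strictly smaller depth, together with hypotheses (1)(i) (for (a),(b): restricting and lifting iterand names along a complete embedding) and (1)(ii) (for (f): recognizing a coordinate value that is both a $\Por\restrict B_1$-name and a $\Por\restrict B_2$-name as a $\Por\restrict(B_1\cap B_2)$-name); in each case one manipulates the finitely many witnessing sets $A\cap Y\in\Iwf_x\restrict A$ of a given condition, using template axioms (2)--(4) to combine them. With (a), (b) in hand the structural dichotomy (d) follows: in case (i) the map $p\mapsto(p\restrict L_x,\,p(x))$ (reading $p(x)$ as $\mathds 1$ when $x\notin\dom p$) is an isomorphism of $\Por\restrict A$ onto $\Por\restrict A_x\ast\Qnm_x^{A_x}$, where inductive (e) and (1)(i) let one replace a witness $C\subseteq A_x$ by $A_x$ itself; in cases (ii), (iii) one checks via (a) that $\Por\restrict A$ is the union of the indicated increasing directed family $\{\Por\restrict B\}$ with compatible orders, hence a genuine direct limit. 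Then (c) is immediate: a two-step iteration of posets, and a direct limit of posets, is a poset.

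The core is (e) and (g), which I would prove together by the same induction. In cases (d)(ii), (d)(iii), where $\Por\restrict A$ is a direct limit, the device is: to decide a reduction-or-compatibility question about finitely many conditions, collect their (finitely many) domains and witnessing sets into one set $B\subseteq A$ that contains them all and is coherent with the current case (so $B\in\Iwf_z\restrict A$ for a suitably large $z$ in case (iii), $B\cap L_x\in\Iwf_x\restrict A$ in case (ii)), hence with $\mathrm{Dp}(B)<\alpha$ by Lemma~\ref{UpsilonTemp}(c); then all the conditions live in $\Por\restrict B$ and the relevant sub-posets $\Por\restrict(A'\cap B)$, $\Por\restrict(D\cap B)$, where inductive (e) and (g) apply. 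For (g) one restricts the given reduction of $p_1$ downward to the pair $(\Por\restrict(A'\cap D\cap B),\Por\restrict(A'\cap B))$ --- using that compatibility is absolute between a poset and a complete sub-poset --- and then transports it along the correct diagram $\langle\Por\restrict(A'\cap D\cap B),\Por\restrict(A'\cap B),\Por\restrict(D\cap B),\Por\restrict B\rangle$ supplied by inductive (g) to obtain a reduction with respect to $(\Por\restrict(D\cap B),\Por\restrict B)$, whence compatibility in $\Por\restrict B\subseteq\Por\restrict A$; for (e) the reduction is produced directly by inductive (e) at level $B$ and then transported the same way. In case (d)(i) one argues on the two-step iteration directly: $\Por\restrict D\cong\Por\restrict D_x\ast\Qnm_x^{D_x}$ with $D_x=D\cap L_x\subseteq A_x$ one depth level down (or $\Por\restrict D=\Por\restrict D_x$ if $x\notin D$), so $\Por\restrict D\lessdot\Por\restrict A$ by inductive (e) for $A_x$, hypothesis (1)(i) and the standard behaviour of two-step iterations, and the diagram in (g) is correct by inductive (g) for $A_x$ together with hypothesis (1)(iii) and the closure of correct diagrams under composition and under two-step iteration. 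This closes the induction.

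The step I expect to be the main obstacle is the bookkeeping hidden in the third paragraph: arranging the enlargement $B$ so that it simultaneously captures all finitely many relevant witnessing sets, remains inside some $\Iwf_z\restrict A$ so that $\mathrm{Dp}(B)<\alpha$, and is compatible with whichever of the three cases of (d) one is in; and, running alongside, the need to fix once and for all a normal form for the names $p(x)$ (say, hereditarily $\Por\restrict B$-nice names with a well-defined support) so that hypothesis (1)(ii) can be invoked in (f) and so that a condition of $\Por\restrict A'$ whose domain and witnesses lie in $B$ is literally a condition of $\Por\restrict(A'\cap B)$. Once that normalization is in place, (a)--(g) all go through by the induction above; this is, in essence, the template-iteration construction of \cite{brendle02,brendle05,mejia-temp}.
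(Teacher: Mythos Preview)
Your outline is correct and matches the approach of the references the paper defers to (\cite{brendle05}, \cite[Thm.~4.1]{mejia-temp}): the paper itself gives no proof beyond those citations, and the construction there proceeds exactly by simultaneous recursion on $\mathrm{Dp}(A)$, establishing (a)--(g) in essentially the order you indicate, with (e) and (g) handled together via the ``collect the witnesses into one $B$ of smaller depth'' device you describe. The bookkeeping concern you flag in your last paragraph is real but is dealt with in those sources just as you anticipate; there is no additional idea you are missing.
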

\begin{proof}
   See \cite[Thm. 2.2]{brendle05} or \cite[Thm 4.1]{mejia-temp}.
\end{proof}

We are particularly interested in ccc template iterations.

\begin{lemma}\label{templateitccc}
   Let $\Por\frestr\langle L,\bar{\Iwf}\rangle$ be a template iteration such that:
   \begin{enumerate}[(i)]
      \item for all $x\in L$, $B\in\hat{\Iwf}_x$ there are $\Por\frestr B$-names $\langle\dot{Q}^B_{x,n}\rangle_{n<\omega}$ which
            witness that $\Qnm^B_x$ is $\sigma$-linked;
      \item if $D\subseteq B$ then $\Vdash_{\Por\upharpoonright B}\dot{Q}^D_{x,n}\subseteq\dot{Q}^B_{x,n}$ for all $n<\omega$.
   \end{enumerate}
   Then $\Por\frestr L$ has the Knaster condition.
\end{lemma}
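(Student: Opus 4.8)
## Proof plan for Lemma \ref{templateitccc}

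The plan is to prove, by induction on $\alpha = \mathrm{Dp}(A)$, the stronger statement that for every $A \subseteq L$ the poset $\Por\frestr A$ carries a witness to the Knaster property of the following uniform kind: there is a map assigning to each $p \in \Por\frestr A$ a finite sequence $\sigma(p) = \langle (x_i, n_i) : i < |p| \rangle$ enumerating $\dom p$ together with, for each $x_i$, an index $n_i < \omega$ such that $p(x_i)$ is (forced to be) an element of $\dot Q^{B_i}_{x_i, n_i}$ for the relevant $B_i \in \Iwf_{x_i}\frestr A$, and such that any two conditions $p, q$ with ``$\sigma(p)$ compatible with $\sigma(q)$'' (same $x$-coordinates that appear in both are assigned the same $n$, and the $x$-coordinates can be shuffled consistently) are compatible in $\Por\frestr A$. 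Since there are only countably many finite sequences from $L \times \omega$ up to this compatibility relation — more precisely, after refining by the finitely many ``patterns'' the $\Delta$-system lemma will produce — a standard $\Delta$-system argument on $\dom p$ then yields Knasterness: given $\aleph_1$-many conditions, thin out to a $\Delta$-system with root $r$, further thin so that all conditions restricted to $r$ have the same sequence of $n$-indices, and then any two of the remaining conditions are compatible.

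First I would handle the base cases and the limit cases of the recursion using Theorem \ref{TempIt}(d). If $A = \emptyset$ this is trivial. If $x = \max(A)$ exists and $A_x \in \hat\Iwf_x$, then $\Por\frestr A = \Por\frestr A_x \ast \dot Q^{A_x}_x$; here I would combine the inductive Knaster-witness for $\Por\frestr A_x$ (valid since $\mathrm{Dp}(A_x) < \mathrm{Dp}(A)$ by Lemma \ref{UpsilonTemp}(c)) with the $\sigma$-linked decomposition $\langle \dot Q^{A_x}_{x,n} \rangle_{n<\omega}$ of hypothesis (i): a condition $(p, \dot r)$ gets the sequence $\sigma(p)$ extended by a pair $(x, n)$ where $n$ is chosen (by a maximality/name-reading argument inside $\Por\frestr A_x$, possibly after passing to an extension of $p$ deciding $n$) so that $p \Vdash \dot r \in \dot Q^{A_x}_{x,n}$. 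Two conditions with compatible extended sequences are then compatible because their first coordinates are compatible in $\Por\frestr A_x$ — say below some $p'$ — and $p'$ forces both $n$-th coordinates into the $2$-linked set $\dot Q^{A_x}_{x,n}$, hence forces them to have a common lower bound. For the two limit/direct-limit cases (d)(ii) and (d)(iii), every condition lies in some $\Por\frestr B$ with $\mathrm{Dp}(B) < \mathrm{Dp}(A)$ by Theorem \ref{TempIt}(a) and the description in (d), so the witness is assembled coordinatewise from the smaller witnesses; the only point to check is coherence, i.e. that the index $n$ attached to a coordinate $x$ does not depend on which $B$ one views the condition through — this is exactly hypothesis (ii), $\Vdash_{\Por\upharpoonright B}\dot Q^D_{x,n} \subseteq \dot Q^B_{x,n}$ for $D \subseteq B$, which guarantees the decompositions of $\dot Q^D_x$ and $\dot Q^B_x$ are compatible, together with Theorem \ref{TempIt}(b),(e) ensuring the forcing relations agree.

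The main obstacle is the compatibility step in the direct-limit case. Given two conditions $p, q \in \Por\frestr A$ with compatible index-sequences, they live in $\Por\frestr B_p$ and $\Por\frestr B_q$ respectively for some $B_p, B_q$ of the form described in (d)(ii)/(iii); one wants a common lower bound. The natural move is to work inside $\Por\frestr(B_p \cup B_q)$ — but such a union need not be of the permitted form and need not have smaller depth, so one cannot directly invoke the induction there. The correct route, following the template-iteration literature, is to use the correctness of the diagram $\langle \Por\frestr(B_p \cap B_q), \Por\frestr B_p, \Por\frestr B_q, \Por\frestr A\rangle$ from Theorem \ref{TempIt}(g): if $p$ and $q$ have a common reduction in $\Por\frestr(B_p \cap B_q)$ then they are compatible in $\Por\frestr A$. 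So the real work is to show that compatibility of the index-sequences forces the existence of such a common reduction, which one does by a secondary induction on $|\dom p| + |\dom q|$, peeling off the largest coordinate, using Theorem \ref{TempIt}(d)(i) to express things as a two-step iteration $\Por\frestr B' \ast \dot Q^{B'}_x$, invoking clause (iii) of part (1) of Theorem \ref{TempIt} (correctness is preserved by the quotient posets $\dot Q^{B'}_x$) and the $2$-linkedness of the pieces $\dot Q^{B'}_{x,n}$. The remaining steps — verifying that only countably many index-patterns arise after a $\Delta$-system reduction, and that the final thinning yields pairwise compatibility — are routine bookkeeping once the coherence (hypothesis (ii)) and the correctness machinery are in place.
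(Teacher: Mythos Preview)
The paper does not supply a proof and simply refers to \cite[Lemma 2.3]{brendle05} and \cite[Lemma 4.5]{mejia-temp}. Your plan follows essentially the argument found there: $\Delta$-system thinning on the finite supports, matching the $\sigma$-linked indices on the root, and proving compatibility by induction (peeling off the maximal coordinate and using $2$-linkedness together with the coherence hypothesis (ii) and the correctness machinery of Theorem \ref{TempIt}).
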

\begin{proof}
   See \cite[Lemma 2.3]{brendle05} and \cite[Lemma 4.5]{mejia-temp}.
\end{proof}

In our applications, we will be using template iterations of the following form:

\begin{definition}\label{DefStandardIt}
   Let $\langle L,\bar{\Iwf}\rangle$ be an indexed template. An iteration $\Por\frestr\langle L,\bar{\Iwf}\rangle$ is \emph{standard} if
   \begin{enumerate}[(i)]
      \item $L=L_S\cup L_C$ is a disjoint union,
      \item for $x\in L_S$, $\mathbb{S}_x$ is a fixed Suslin $\sigma$-linked correctness-preserving forcing notion coded in the ground model,
      \item for $x\in L_S$ and $B\in\hat{\Iwf}_x$, $\Qnm_x^B$ is a $\Por\frestr B$-name for $\mathbb{S}_x^{V^{\Por\upharpoonright B}}$,
      \item for $x\in L_C$, $C_x$ is a fixed set in $\hat{\Iwf}_x$, $\Qnm_x$ is a $\Por\frestr C_x$-name for a $\sigma$-linked poset whose conditions are reals\footnote{These reals belong to some fixed uncountable Polish space $R_x$ coded in the ground model.}, and
      \item for $x\in L_C$ and $B\in\hat{\Iwf}_x$ the name $\Qnm_x^B$ is either $\Qnm_x$ in case $C_x\subseteq B$, or it is a name for the trivial poset otherwise.
   \end{enumerate}
   If $\theta$ is a cardinal, say that the iteration is \emph{$\theta$-standard} if, additionally, $|C_x|<\theta$ for all $x\in L_C$.
\end{definition}


\begin{lemma}\label{CondSupp}
  Let $\theta$ be a cardinal with uncountable cofinality and let $\Por\frestr\langle L,\bar{\Iwf}\rangle$ be a $\theta$-standard template iteration. Then for each $A\subseteq L$,
  \begin{enumerate}[(a)]
     \item $\Por\frestr A$ is Knaster,
     \item if $p\in\Por\frestr A$ then there is $C\subseteq A$ of size $<\theta$ such that $p\in\Por\frestr C$, and
     \item if $\dot{x}$ is a $\Por\frestr A$-name for a real, then there is $C'\subseteq A$ of size $<\theta$ such that $\dot{x}$ is a $\Por\frestr C'$-name.
  \end{enumerate}
\end{lemma}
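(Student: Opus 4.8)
The plan is to prove the three parts essentially by simultaneous induction on $\mathrm{Dp}(A)$, using the structural decomposition of $\Por\frestr A$ given in Theorem~\ref{TempIt}(d) together with the standing hypothesis that $|C_x|<\theta$ for all $x\in L_C$. Part (a) is immediate from Lemma~\ref{templateitccc}: a $\theta$-standard iteration satisfies hypotheses (i) and (ii) of that lemma, since for $x\in L_S$ the $\sigma$-linked witnesses for $\mathbb{S}_x^{V^{\Por\upharpoonright B}}$ are given by the $\boldsymbol\Sigma^1_1$-definable $\sigma$-linked decomposition of the Suslin poset $\mathbb{S}_x$ (evaluated in the respective extensions, hence nested along $D\subseteq B$), and for $x\in L_C$ the poset $\Qnm_x$ is assumed $\sigma$-linked, with the same canonical witnesses used for every $B\supseteq C_x$ and the trivial decomposition otherwise; applying Lemma~\ref{templateitccc} to $\langle A,\bar{\Iwf}\frestr A\rangle$ (which is again an indexed template, and along which $\Por\frestr A$ is a standard iteration) gives that $\Por\frestr A$ is Knaster.

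For parts (b) and (c) I would argue by induction on $\alpha=\mathrm{Dp}(A)$, following the three cases of Theorem~\ref{TempIt}(d). If $A$ has a maximum $x$ and $A_x:=A\cap L_x\in\hat{\Iwf}_x$, then $\Por\frestr A=\Por\frestr A_x\ast\Qnm_x^{A_x}$ and $\mathrm{Dp}(A_x)<\alpha$. A condition $p=(p\frestr L_x, p(x))$ consists of a condition in $\Por\frestr A_x$, which by the inductive hypothesis lives in $\Por\frestr C_0$ for some $C_0\subseteq A_x$ of size $<\theta$, together with a $\Por\frestr A_x$-name $p(x)$; if $x\in L_S$ this name is for a real, so by the inductive hypothesis it is a $\Por\frestr C_1$-name for some $C_1\subseteq A_x$ of size $<\theta$, while if $x\in L_C$ the name $p(x)$ is a condition in a poset of reals depending only on $\Por\frestr C_x$ with $|C_x|<\theta$; in either case $C:=C_0\cup C_1\cup\{x\}$ (or $C_0\cup C_x\cup\{x\}$) has size $<\theta$ and $p\in\Por\frestr C$. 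If $A$ has a maximum $x$ with $A_x\notin\hat{\Iwf}_x$, or $A$ has no maximum, then $\Por\frestr A$ is a direct limit of the $\Por\frestr B$ with $\mathrm{Dp}(B)<\alpha$, so $p$ already belongs to some such $\Por\frestr B$ and the inductive hypothesis applied to $B$ finishes the case; this proves (b).

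For (c), let $\dot{x}$ be a $\Por\frestr A$-name for a real. Since $\Por\frestr A$ is ccc (by part (a)) and $\mathrm{cf}(\theta)>\omega$, $\dot{x}$ is decided by countably many maximal antichains $\langle A_n\rangle_{n<\omega}$ of $\Por\frestr A$, each of which is countable; by part (b) each condition appearing in any $A_n$ lies in $\Por\frestr C$ for some $C\subseteq A$ of size $<\theta$, and taking the union over the (countably many, each size $<\theta$) such $C$'s, using $\mathrm{cf}(\theta)>\omega$, gives a single $C'\subseteq A$ of size $<\theta$ containing all these conditions. It then remains to check that $\dot{x}$ can be rewritten as a $\Por\frestr C'$-name: since $\Por\frestr C'\lessdot\Por\frestr A$ by Theorem~\ref{TempIt}(e), each $A_n$ is still a maximal antichain of $\Por\frestr C'$, so the standard nice-name construction over $\Por\frestr C'$ recovers $\dot{x}$. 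The main obstacle is precisely this last point — being careful that the completeness $\Por\frestr C'\lessdot\Por\frestr A$ lets one transfer the name, and that one really can choose the $C$ witnessing (b) for the antichain elements uniformly so that the union stays of size $<\theta$; both follow from $\mathrm{cf}(\theta)>\omega$ together with Theorem~\ref{TempIt}(e), but they are the steps that require attention rather than the routine induction.
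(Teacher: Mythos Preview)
Your argument is essentially the standard one (the paper itself simply cites \cite[Lemma 2.4]{brendle05} and \cite[Lemma 4.6]{mejia-temp}), and the overall structure---(a) via Lemma~\ref{templateitccc}, then a simultaneous induction on $\mathrm{Dp}(A)$ for (b) using (c) at lower depth, and (c) from (a), (b) and $\mathrm{cf}(\theta)>\omega$---is correct.

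There is one small slip in your treatment of case (i) of (b) when $x\in L_C$. You write that ``$p(x)$ is a condition in a poset of reals depending only on $\Por\frestr C_x$'' and propose $C=C_0\cup C_x\cup\{x\}$. But while the \emph{poset} $\Qnm_x$ is a $\Por\frestr C_x$-name, the \emph{name} $p(x)$ is a $\Por\frestr B$-name for a real in $\Qnm_x$ and may genuinely depend on the generic beyond $C_x$. You must therefore apply the inductive instance of (c) here exactly as in the $L_S$ case, obtaining $C_1\subseteq B$ of size $<\theta$ with $p(x)$ a $\Por\frestr C_1$-name, and then take $C=C_0\cup C_1\cup C_x\cup\{x\}$. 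With $B':=C\cap L_x=C_0\cup C_1\cup C_x$ one checks $B'\in\Iwf_x\frestr C$ (since $B'\subseteq B$ and $B\in\Iwf_x\frestr A$), that $C_x\subseteq B'$ so $\Qnm_x^{B'}=\Qnm_x$, and that $\Por\frestr B'$ forces $p(x)\in\Qnm_x$ because both names live over $\Por\frestr B'\lessdot\Por\frestr B$. With this correction your proof goes through.
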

\begin{proof}
   See \cite[Lemma 2.4]{brendle05} and \cite[Lemma 4.6]{mejia-temp}.
\end{proof}

We will use Shelah's notion of innocuous extension to give a sufficient condition for the forcing equivalence of two distinct standard template iterations.

\begin{definition}[Innocuous extension]\label{DefInnoc}
   Let $\langle L,\bar{\Iwf}\rangle$ be an indexed template and $\theta$ an uncountable cardinal. An indexed template $\langle L,\bar{\Jwf}\rangle$ is a \emph{$\theta$-innocuous extension of $\langle L,\bar{\Iwf}\rangle$} if
           \begin{enumerate}[(i)]
              \item for every $x\in L$, $\Iwf_x\subseteq\Jwf_x$ and
              \item for any $x\in L$ and $X\in\hat{\Jwf}_x$, if $|X|<\theta$ then $X\in\hat{\Iwf}_x$.
           \end{enumerate}
\end{definition}

\begin{definition}\label{DefTempIsom}
   Let $\langle L,\bar{\Iwf}\rangle$ and $\langle L^*,\bar{\Iwf}^*\rangle$ be templates. A function $h:\langle L^*,\bar{\Iwf}^*\rangle\to\langle L,\bar{\Iwf}\rangle$ is a \emph{template-isomorphism} iff it is a bijection that satisfies for all $x,y\in L^*$:
   \begin{enumerate}[(i)]
      \item $x<y$ iff $h(x)<h(y)$ and
      \item $\Iwf_{h(x)}=\{h[A]:A\in\Iwf^*_x\}$.
   \end{enumerate}
\end{definition}

\begin{lemma}\label{InnEqv}
   Let $\theta$ be a cardinal with uncountable cofinality, $\langle L,\bar{\Iwf}\rangle$ and $\langle L^*,\bar{\Iwf}^*\rangle$ templates and $h:\langle L^*,\bar{\Iwf}^*\rangle\to\langle L,\bar{\Iwf}\rangle$ a template-isomorphism.
   Let $\langle L^*,\bar{\Jwf}\rangle$ be a $\theta$-innocuous extension of $\langle L^*,\bar{\Iwf}^*\rangle$. Let $\Por\frestr\langle L,\bar{\Iwf}\rangle$ and $\Por^*\frestr\langle L^*,\bar{\Jwf}\rangle$ be $\theta$-standard template iterations such that:
   \begin{enumerate}[(1)]
      \item $h[L^*_S]=L_S$ and $h[L^*_C]=L_C$;
      \item for $y\in L^*_S$, $\mathbb{S}^*_y=\mathbb{S}_{h(y)}$;
      \item if $y\in L^*_C$ then $h[C^*_y]=C_{h(y)}$ and, whenever there is a sequence $\langle\hat{h}_D : D\subseteq C^*_y\rangle$ of functions such that
      \begin{enumerate}[(3.1)]
      \setcounter{enumii}{-1}
       \item $\hat{h}_D:\Por^*\frestr D\to\Por\frestr h[D]$ is an isomorphism,
       \item $Y\subseteq D$ implies $\hat{h}_Y\subseteq\hat{h}_D$,
       \item for $z\in D\cap L^*_C$ and $E\in\Pwf(D)\cap\hat{\Jwf}_z$, $\Qnm^{h[E]}_{h(z)}$ is the name associated to $\Qnm_z^{*E}$ via $\hat{h}_E$ and,
       \item for $p\in\Por^*\frestr D$, $\dom(\hat{h}_D(p))=h[\dom p]$ and, if $z=\max(\dom p)$, $E\in\Jwf_z\frestr D$, $p\frestr L^*_z\in\Por^*\frestr E$ and $p(z)$ is a $\Por^*\frestr E$-name for a member of $\Qnm_z^{*E}$, then $\hat{h}_D(p)\frestr L_{h(z)}=\hat{h}_E(p\frestr L^*_z)$ and $\hat{h}_D(p)(h(z))$ is the $\Por\frestr h[E]$-name associated to $p(z)$ via $\hat{h}_E$,
      \end{enumerate}
      then $\Qnm_{h(y)}$ is the name associated to $\Qnm^*_{y}$ via $\hat{h}_{C^*_y}$.
   \end{enumerate}
   Then, there exists a unique sequence $\langle\hat{h}_D:D\in[L^*]^{<\theta}\rangle$ satisfying (3.0)-(3.3). Moreover, $\hat{h}:=\bigcup\{\hat{h}_D:D\in[L^*]^{<\theta}\}$ is an isomorphism from $\Por^*\frestr L^*$ onto $\Por\frestr L$ and, for any $Y\subseteq L^*$, $\hat{h}\frestr(\Por^*\frestr Y)=\bigcup\{\hat{h}_D:D\in[Y]^{<\theta}\}$ is an isomorphism onto $\Por\frestr h[Y]$.
\end{lemma}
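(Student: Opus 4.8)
The plan is to construct the sequence $\langle\hat{h}_D : D\in[L^*]^{<\theta}\rangle$ by recursion on $\Dp^{\bar{\Jwf}}(D)$, mirroring the recursive construction of the template iteration $\Por^*\frestr\langle L^*,\bar{\Jwf}\rangle$ in Theorem \ref{TempIt}, and to verify at each stage that $\hat{h}_D$ is an isomorphism onto $\Por\frestr h[D]$. The first thing to observe is that $h$, being a template-isomorphism, restricts to a template-isomorphism between $\langle L^*_x,\bar{\Iwf}^*\restrict L^*_x\rangle$-type structures, and more importantly it carries $\hat{\Jwf}$-structure appropriately: since $\langle L^*,\bar{\Jwf}\rangle$ is a $\theta$-innocuous extension of $\langle L^*,\bar{\Iwf}^*\rangle$, for any $D\in[L^*]^{<\theta}$ and $x\in D$ we have $D\cap L^*_x\in\hat{\Jwf}_x$ iff $D\cap L^*_x\in\hat{\Iwf}_x$, so that the trace templates $\langle D,\bar{\Jwf}\restrict D\rangle$ and $\langle h[D],\bar{\Iwf}\restrict h[D]\rangle$ carry exactly the same combinatorial data via $h$. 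Hence $h$ induces an isomorphism of the well-founded structures $\Jwf(D)$ and $\Iwf(h[D])$, and in particular $\Dp^{\bar{\Jwf}}(D')=\Dp^{\bar{\Iwf}\restrict L}(h[D'])$ for all $D'\subseteq D$ of size $<\theta$; this legitimizes the recursion on depth and guarantees that the case splits in Theorem \ref{TempIt}(d) match up under $h$.

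The recursion then follows the three cases of Theorem \ref{TempIt}(d). In the \emph{direct-limit cases} (no maximum of $D$, or maximum $x$ with $D\cap L^*_x\notin\hat{\Jwf}_x$), $\Por^*\frestr D$ is the direct limit of the $\Por^*\frestr E$ for the relevant smaller $E\subseteq D$, each of which has strictly smaller $\bar{\Jwf}$-depth; one sets $\hat{h}_D=\bigcup\hat{h}_E$, which is well-defined and coherent by the uniqueness part of the inductive hypothesis (clause (3.1), compatibility), and it is an isomorphism onto the corresponding direct limit $\Por\frestr h[D]$ because $h$ matches the indexing sets. In the \emph{two-step case}, where $x=\max(D)$ and $B:=D\cap L^*_x\in\hat{\Jwf}_x$, we have $\Por^*\frestr D=\Por^*\frestr B\ast\Qnm_x^{*B}$ and $\Por\frestr h[D]=\Por\frestr h[B]\ast\Qnm_{h(x)}^{h[B]}$; here $\hat{h}_B$ is already given by the inductive hypothesis (as $\Dp^{\bar{\Jwf}}(B)<\Dp^{\bar{\Jwf}}(D)$), so it suffices to check that $\hat{h}_B$ carries the name $\Qnm_x^{*B}$ to the name $\Qnm_{h(x)}^{h[B]}$. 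For $x\in L^*_S$ this is immediate from hypothesis (2): both names are for the same fixed Suslin ccc poset $\mathbb{S}_x^{V^{\Por\restrict B}}$ computed in forcing extensions that $\hat{h}_B$ identifies, using that a Suslin ccc poset has an absolute $\boldsymbol{\Sigma}_1^1$-definition. For $x\in L^*_C$, if $C^*_x\not\subseteq B$ both sides are (names for) the trivial poset and there is nothing to check; if $C^*_x\subseteq B$ then $h[C^*_x]=C_{h(x)}\subseteq h[B]$, and by the inductive hypothesis the restrictions $\langle\hat{h}_{D'}:D'\subseteq C^*_x\rangle$ already satisfy clauses (3.0)-(3.3), so hypothesis (3) applies verbatim and tells us that $\Qnm_{h(x)}$ is the name associated to $\Qnm^*_x$ via $\hat{h}_{C^*_x}$; since $\Qnm_x^{*B}=\Qnm_x^*$ and $\Qnm_{h(x)}^{h[B]}=\Qnm_{h(x)}$ in this case, and $\hat{h}_{C^*_x}\subseteq\hat{h}_B$, we are done. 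Defining $\hat{h}_D$ on the two-step extension in the obvious way (apply $\hat{h}_B$ on the $\Por^*\restrict B$-coordinate and the induced name-translation on the last coordinate) gives an isomorphism, and clause (3.3) records precisely this formula. Uniqueness at each stage is forced by clauses (3.1) and (3.3): the value of $\hat{h}_D(p)$ is completely determined by the values on proper subsets together with the last-coordinate translation rule.

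Finally, taking $\hat{h}:=\bigcup\{\hat{h}_D : D\in[L^*]^{<\theta}\}$, one checks it is a well-defined order-isomorphism from $\Por^*\frestr L^*$ onto $\Por\frestr L$: every $p\in\Por^*\frestr L^*$ lies in some $\Por^*\frestr D$ with $|D|<\theta$ by Lemma \ref{CondSupp}(b) (both iterations are $\theta$-standard and $\cf(\theta)>\omega$), and coherence of the $\hat{h}_D$'s makes $\hat{h}(p):=\hat{h}_D(p)$ unambiguous; order-preservation in both directions, and surjectivity onto $\Por\frestr L=\Por\frestr h[L^*]$, reduce to the corresponding statements for the $\hat{h}_D$ via the same $<\theta$-support argument, using Theorem \ref{TempIt}(b) that the orderings on $\Por^*\frestr D$ and $\Por^*\frestr L^*$ agree on $\Por^*\frestr D$. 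The restriction statement for $Y\subseteq L^*$ follows because $[Y]^{<\theta}$ is cofinal in the relevant directed system and $h[Y]$ is treated symmetrically. \textbf{The main obstacle} is the bookkeeping in the successor (two-step) case when $x\in L^*_C$: one must be careful that the hypothesis (3) is genuinely applicable, i.e. that the partial sequence $\langle\hat{h}_{D'} : D'\subseteq C^*_x\rangle$ produced so far really does satisfy clauses (3.0)-(3.3) \emph{and} that its components are the restrictions of the $\hat{h}_D$'s being built — this requires threading the inductive hypothesis so that "the isomorphisms on subsets of $C^*_x$" are literally the objects referenced in the statement, and checking that the correctness-of-diagrams conditions in Theorem \ref{TempIt}(1)(iii) and the $\sigma$-linkedness witnesses of Lemma \ref{templateitccc} are transported by $h$ so that the name-translation is well-defined at all. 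Everything else is a routine induction on $\Dp^{\bar{\Jwf}}$.
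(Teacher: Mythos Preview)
Your overall strategy---recursion on $\Dp^{\bar{\Jwf}}(D)$ for $D\in[L^*]^{<\theta}$, with the case split on $L^*_S$ versus $L^*_C$ at the top coordinate---is exactly the paper's approach. There is, however, one genuine gap. You assert that innocuousness makes the trace templates $\langle D,\bar{\Jwf}\frestr D\rangle$ and $\langle h[D],\bar{\Iwf}\frestr h[D]\rangle$ carry ``exactly the same combinatorial data via $h$'', that $h$ induces an isomorphism $\Jwf(D)\cong\Iwf(h[D])$, and that the depths agree. This does not follow: $\theta$-innocuousness only says that $\hat{\Jwf}_x$ and $\hat{\Iwf}^*_x$ agree on subsets of size $<\theta$, which does \emph{not} force $\Jwf_x\frestr D=\Iwf^*_x\frestr D$. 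In particular, for $E\in\Jwf_z\frestr D$ one may well have $h[E]\notin\Iwf_{h(z)}\frestr h[D]$, so the direct-limit indexing sets on the two sides need not match, and your claim that ``$h$ matches the indexing sets'' is unjustified.

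The paper's proof confronts this explicitly. It defines $\hat{h}_D(p)$ condition-by-condition rather than following the three-way split of Theorem~\ref{TempIt}(d): given $p$ with $z=\max(\dom p)$, pick $E\in\Jwf_z\frestr D$ witnessing $p\in\Por^*\frestr D$ and set $\hat{h}_D(p)=\hat{h}_E(p\frestr L^*_z)\cup\{(h(z),\dot{r})\}$, checking independence from the choice of $E$ via (3.1). It then notes that $h[E]\in\hat{\Iwf}_{h(z)}$ (this much \emph{does} follow from innocuousness, cf.\ Remark~\ref{RemInnEqv}) but possibly $h[E]\notin\Iwf_{h(z)}\frestr h[D]$; nevertheless, since $h[E]\subseteq h[D]\cap L_{h(z)}$ and $h[E]\in\hat{\Iwf}_{h(z)}$, one can find $B\in\Iwf_{h(z)}\frestr h[D]$ with $h[E]\subseteq B$, which suffices to certify $\hat{h}_D(p)\in\Por\frestr h[D]$. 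Your organization can be patched the same way: in the direct-limit cases you do not need the two indexing systems to coincide, only that each $\Por\frestr h[E]$ sits inside $\Por\frestr h[D]$ (Theorem~\ref{TempIt}(b),(e)) and, for surjectivity, that each $F\in\Iwf_{h(x)}\frestr h[D]$ pulls back to $h^{-1}[F]\in\Iwf^*_x\frestr D\subseteq\Jwf_x\frestr D$. Once this is said, the rest of your outline goes through.
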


\begin{remark}\label{RemInnEqv}
   The previous lemma is a more detailed version of \cite[Lemma 1.7]{brendle02} and \cite[Lemma 4.8]{mejia-temp} that we present for constructive purposes. Note that, whenever $z\in L^*$ and $E\in[L^*_z]^{<\theta}$, then $E\in\hat{\Jwf}_z$ iff $h[E]\in\hat{\Iwf}_{h(z)}$, this because $\langle L^*,\bar{\Jwf}\rangle$ is a $\theta$-innocuous extension of $\langle L^*,\bar{\Iwf}^*\rangle$ and by properties (i) and (ii). For this reason, (3.2) makes sense as $\Qnm^{h[E]}_{h(z)}$ is defined iff $\Qnm_z^{*E}$ is. Moreover, the lemma directly implies that the sequence in (3) exists and is unique for each $C^*_y$.

   However, properties (3.0)-(3.3) are restricted to subsets $D$ of size $<\theta$ because there may be an $E\in\hat{\Jwf}_z$ of size bigger than or equal to $\theta$ such that $h[E]\notin\hat{\Iwf}_{h(z)}$, so $\Qnm^{h[E]}_{h(z)}$ is undefined. When $\hat{\Jwf}_z=\hat{\Iwf}^*_z$ we don't have that problem.
\end{remark}

\begin{corollary}\label{IsomTemp}
   With the same hypotheses as in Lemma \ref{InnEqv}, assume further that $\hat{\Jwf}_z=\hat{\Iwf}^*_z$ for all $z\in L^*$. Then there is a unique sequence $\langle\hat{h}_Y:Y\subseteq L^*\rangle$ satisfying (3.0)-(3.3). Moreover, $\hat{h}_Y=\hat{h}_{L^*}\frestr(\Por^*\frestr Y)$ for any $Y\subseteq L^*$.
\end{corollary}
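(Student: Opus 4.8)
The plan is to deduce this corollary directly from Lemma~\ref{InnEqv} by observing that, under the additional hypothesis $\hat{\Jwf}_z=\hat{\Iwf}^*_z$ for all $z\in L^*$, the obstruction described in Remark~\ref{RemInnEqv} disappears, so the size restriction ``$D\in[L^*]^{<\theta}$'' in properties (3.0)--(3.3) can be dropped. First I would invoke Lemma~\ref{InnEqv} itself (noting that $\langle L^*,\bar{\Iwf}^*\rangle$ is trivially a $\theta$-innocuous extension of itself, so the hypotheses are inherited) to obtain the family $\langle\hat{h}_D:D\in[L^*]^{<\theta}\rangle$ and the isomorphism $\hat{h}=\bigcup\{\hat{h}_D:D\in[L^*]^{<\theta}\}:\Por^*\frestr L^*\to\Por\frestr L$, together with the fact that for every $Y\subseteq L^*$ the restriction $\hat{h}\frestr(\Por^*\frestr Y)$ is an isomorphism onto $\Por\frestr h[Y]$.

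Next I would define, for an \emph{arbitrary} $Y\subseteq L^*$, $\hat{h}_Y:=\hat{h}\frestr(\Por^*\frestr Y)=\hat{h}_{L^*}\frestr(\Por^*\frestr Y)$ (here $\hat{h}_{L^*}=\hat{h}$ since $L^*=\bigcup[L^*]^{<\theta}$ when $\theta$ has uncountable cofinality and, more to the point, every condition in $\Por^*\frestr L^*$ has support of size $<\theta$ by Lemma~\ref{CondSupp}(b)). I then need to check that this extended family satisfies (3.0)--(3.3). Property (3.0) — that $\hat{h}_Y$ is an isomorphism onto $\Por\frestr h[Y]$ — is exactly the ``moreover'' clause of Lemma~\ref{InnEqv}. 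Property (3.1), monotonicity $Z\subseteq Y\Rightarrow \hat{h}_Z\subseteq\hat{h}_Y$, is immediate since both are restrictions of the single map $\hat{h}$. For (3.2) and (3.3), which concern a point $z\in Y$ and a set $E\in\Pwf(Y)\cap\hat{\Iwf}^*_z$ (now possibly of size $\geq\theta$), I would argue as follows: since every condition of $\Por^*\frestr E$ has support of size $<\theta$ (Lemma~\ref{CondSupp}(b)), $\Por^*\frestr E$ is the directed union of the $\Por^*\frestr D$ for $D\in[E]^{<\theta}$, and $\hat{h}\frestr(\Por^*\frestr E)=\bigcup\{\hat{h}_D:D\in[E]^{<\theta}\}$; likewise any $\Por^*\frestr E$-name for an element of a $\sigma$-linked poset whose conditions are reals (in particular the names occurring in (3.2), (3.3)) is, by Lemma~\ref{CondSupp}(c), already a $\Por^*\frestr D$-name for some $D\in[E]^{<\theta}$. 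Hence the local conditions (3.2)--(3.3) for the pair $(z,E)$ follow from the corresponding conditions for $(z,D)$, which hold by the already-constructed small-support family. Since $\hat{\Jwf}_z=\hat{\Iwf}^*_z$, the name $\Qnm^{h[E]}_{h(z)}$ is genuinely defined (no undefined-name issue), so these statements make sense as written.

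Finally, for uniqueness: suppose $\langle\hat{g}_Y:Y\subseteq L^*\rangle$ is another family satisfying (3.0)--(3.3). Restricting to sets of size $<\theta$ gives a family satisfying the small-support version, which by the uniqueness clause of Lemma~\ref{InnEqv} must coincide with $\langle\hat{h}_D:D\in[L^*]^{<\theta}\rangle$; then for arbitrary $Y$, $\hat{g}_Y$ is determined by its restrictions to $[Y]^{<\theta}$ via (3.1) and the fact (Lemma~\ref{CondSupp}(b)) that $\Por^*\frestr Y=\bigcup\{\Por^*\frestr D:D\in[Y]^{<\theta}\}$, so $\hat{g}_Y=\bigcup\{\hat{h}_D:D\in[Y]^{<\theta}\}=\hat{h}_Y$. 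I expect the only slightly delicate point to be the verification of (3.3) for large $E$ — one must make sure that the recursive clauses defining $\hat{h}_D(p)$ (domain, restriction to $L^*_z$, the associated name at the top coordinate) are genuinely local, i.e. depend only on a $<\theta$-sized piece of $E$; but this is guaranteed by the finiteness of $\dom p$ together with Lemma~\ref{CondSupp}(b),(c), so no real obstacle arises.
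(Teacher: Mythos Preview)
Your approach is correct. The paper does not give an explicit proof of the corollary, but Remark~\ref{RemInnEqv} makes clear that the intended argument is simply to rerun the recursion in the proof of Lemma~\ref{InnEqv} on $\mathrm{Dp}^{\bar{\Jwf}}(D)$ for \emph{all} $D\subseteq L^*$, not just those of size $<\theta$: the only reason for the size restriction was to guarantee $h[E]\in\hat{\Iwf}_{h(z)}$ whenever $E\in\hat{\Jwf}_z$, and under the hypothesis $\hat{\Jwf}_z=\hat{\Iwf}^*_z$ this holds automatically via the template isomorphism $h$.

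Your route is slightly different: instead of redoing the recursion, you invoke Lemma~\ref{InnEqv} once to get the small-support family and the global isomorphism $\hat{h}$, then \emph{define} $\hat{h}_Y:=\hat{h}\frestr(\Por^*\frestr Y)$ and verify (3.0)--(3.3) by reducing every instance to a $<\theta$-sized one via Lemma~\ref{CondSupp}. This works, and your analysis of the only delicate point (that the clauses in (3.2)--(3.3) are local in the sense that any name involved already lives over some $D\in[E]^{<\theta}$) is correct; note in particular that (3.2) concerns only $z\in L^*_C$, where $\Qnm_z^{*E}$ is either trivial or equal to the fixed $\Por^*\frestr C^*_z$-name $\Qnm^*_z$ with $|C^*_z|<\theta$, so the reduction is immediate. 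The paper's direct recursion is shorter, but your bootstrap has the virtue of making the ``moreover'' clause $\hat{h}_Y=\hat{h}_{L^*}\frestr(\Por^*\frestr Y)$ hold by definition rather than requiring a separate check.
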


\begin{proof}[Proof of Lemma \ref{InnEqv}]
   We construct $\hat{h}_D$ by induction on $\mathrm{Dp}^{\bar{\Jwf}}(D)$ for $D\in[L^*]^{<\theta}$. Let $p\in\Por^*\frestr D$. If $\dom p=\emptyset$ then $\hat{h}_D(p)$ is the empty sequence, so assume that $\dom p$ is non-empty with maximum $z$. By Theorem \ref{TempIt}(2) there is $E\in\Jwf_z\frestr D$ such that $p\frestr L^*_z\in\Por^*\frestr E$ and $p(z)$ is a $\Por^*\frestr E$-name for a condition in $\Qnm^{*E}_z$. By induction hypothesis, we know $\hat{h}_E$. We split into cases to show that $\Qnm^{h[E]}_{h(z)}$ is the $\Por\frestr h[E]$-name associated to $\Qnm^{*E}_{z}$ via $\hat{h}_E$.
   \begin{itemize}
      \item \emph{$z\in L^*_S$}. By (1) $h(z)\in L_S$ and, by (2), $\Qnm^{h[E]}_{h(z)}$ is a name for $\Sor_{h(z)}^{V^{\Por\rest h[E]}}=\Sor_z^{*V^{\Por^*\rest E}}$.
      \item \emph{$z\in L^*_C$ and $C^*_z\nsubseteq E$}. Then, $C_{h(z)}\nsubseteq h[E]$ and both $\Qnm^{*E}_z$ and $\Qnm^{h[E]}_{h(z)}$ are names for the trivial poset.
      \item \emph{$z\in L^*_C$ and $C^*_z\subseteq E$}. Then, $C_{h(z)}\subseteq h[E]$ and, by induction hypothesis, $\Qnm_{h(z)}$ is the name associated to $\Qnm^*_z$ via $\hat{h}_{C^*_z}$, so $\Qnm^{h[E]}_{h(z)}=\Qnm_{h(z)}$ is the name associated to $\Qnm^{*E}_z=\Qnm^*_z$ via $\hat{h}_{E}$ (because $\hat{h}_{C^*_z}\subseteq\hat{h}_E$).
   \end{itemize}
   Let $\dot{r}$ be the $\Por\frestr h[E]$-name associated to $p(z)$ via $\hat{h}_E$, which is indeed a name for a condition in $\Qnm^{h[E]}_{h(z)}$. Put $\hat{h}_D(p)=\hat{h}_E(p\frestr L^*_z)\cup\{(h(z),\dot{r})\}$, which is a condition in $\Por\frestr h[D]$ ($h[E]\in\hat{\Iwf}_{h(z)}$ by Remark \ref{RemInnEqv} but, in spite that it may not be in $\Iwf_{h(z)}\frestr h[D]$, we can find a $B\in\Iwf_{h(z)}\frestr h[D]$ containing $h[E]$ so $\hat{h}_E(p\frestr L^*_z)\in\Por\frestr B$ and $\dot{r}$ is a $\Por\frestr B$-name of a condition in $\Qnm^B_{h(z)}$). Note that $\hat{h}_D(p)$ does not depend on the chosen $E$ because, if we use some other $E'\in\Jwf_z\frestr D$, then $E''=E\cup E'\in\Jwf_z\frestr D$ and $\hat{h}_{E''}$ extends both $\hat{h}_E$ and $\hat{h}_{E'}$ by induction hypothesis, so $\dot{r}$ is the same name via any of those three functions and $\hat{h}_E(p\frestr L^*_z)=\hat{h}_{E''}(p\frestr L^*_z)=\hat{h}_{E'}(p\frestr L^*_z)$. (3.0)-(3.3) are easily verified for $\hat{h}_D$.

   To see uniqueness, let $\langle\hat{h}'_D:D\in[L_z]^{<\theta}\rangle$ be another sequence satisfying (3.0)-(3.3). By (3.3), $\hat{h}'_D=\hat{h}_D$ is easily verified by induction on $\mathrm{Dp}^{\bar{\Jwf}}(D)$ for $D\in[L^*]^{<\theta}$.

   Now let $Y\subseteq L^*$ be arbitrary. Lemma \ref{CondSupp} implies that $\Por^*\frestr Y=\bigcup\{\Por^*\frestr D:D\in[Y]^{<\theta}\}$ and likewise for $\Por\frestr h[Y]$, so $\bigcup\{\hat{h}_D:D\in[Y]^{<\theta}\}$ defines an isomorphism from $\Por^*\frestr Y$ onto $\Por\frestr h[Y]$.
\end{proof}

\begin{lemma}\label{InnConstr}
   Let $\theta$ be a cardinal with uncountable cofinality, $\langle L,\bar{\Iwf}\rangle$ and $\langle L^*,\bar{\Iwf}^*\rangle$ templates and $h:\langle L^*,\bar{\Iwf}^*\rangle\to\langle L,\bar{\Iwf}\rangle$ a template-isomorphism. Let $\Por\frestr\langle L,\bar{\Iwf}\rangle$ be a $\theta$-standard iteration. If $\langle L^*,\bar{\Jwf}\rangle$ is a $\theta$-innocuous extension of $\langle L^*,\bar{\Iwf}^*\rangle$, then there is a $\theta$-standard iteration $\Por^*\langle L^*,\bar{\Jwf}\rangle$ that satisfies (1)-(3) of Lemma \ref{InnEqv}.
\end{lemma}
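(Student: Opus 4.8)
The plan is to run, as a single recursion on $\Dp^{\bar{\Jwf}}$, the recursive definition of a template iteration from Theorem~\ref{TempIt} together with the construction of the isomorphisms in the proof of Lemma~\ref{InnEqv}, using the latter maps to define the iterands of $\Por^*$ at the coordinates in $L^*_C$ as we go. First I would fix the data forced on us by clauses (1)--(3) of Lemma~\ref{InnEqv}: put $L^*_S:=h^{-1}[L_S]$, $L^*_C:=h^{-1}[L_C]$; for $y\in L^*_S$ let $\mathbb{S}^*_y:=\mathbb{S}_{h(y)}$ (again a ground-model coded, Suslin $\sigma$-linked, correctness-preserving poset); and for $y\in L^*_C$ let $C^*_y:=h^{-1}[C_{h(y)}]$ and let $R^*_y$ be the Polish space $R_{h(y)}$. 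Since $\Por\frestr\langle L,\bar{\Iwf}\rangle$ is $\theta$-standard, $|C^*_y|=|C_{h(y)}|<\theta$, and since $h$ is a template-isomorphism $C^*_y\in\hat{\Iwf}^*_y\subseteq\hat{\Jwf}_y$ (the last inclusion by Definition~\ref{DefInnoc}(i)); hence $C^*_y\in\hat{\Jwf}_y$ and $|C^*_y|<\theta$, as a $\theta$-standard iteration requires.

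Now recurse on $\alpha$. Assuming all iterands $\Qnm^{*E}_z$ and all maps $\hat{h}_E$ with $\Dp^{\bar{\Jwf}}(E)<\alpha$ are defined, Theorem~\ref{TempIt} determines $\Por^*\frestr A$ for every $A$ with $\Dp^{\bar{\Jwf}}(A)\leq\alpha$ (the iterand entering the two-step case of Theorem~\ref{TempIt}(d) has depth $<\alpha$ by Lemma~\ref{UpsilonTemp}(b),(c)). For each $D\in[L^*]^{<\theta}$ with $\Dp^{\bar{\Jwf}}(D)=\alpha$ I then build $\hat{h}_D\colon\Por^*\frestr D\to\Por\frestr h[D]$ verbatim as in the proof of Lemma~\ref{InnEqv}. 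This appeals only to $\Qnm^{*E}_z$ and $\hat{h}_E$ with $\Dp^{\bar{\Jwf}}(E)<\alpha$ (again by Lemma~\ref{UpsilonTemp}(c)); at a maximal coordinate $z\in L^*_C$ of a condition, with associated set $E\supseteq C^*_z$, it uses the name $\Qnm^*_z$ — available since $\Dp^{\bar{\Jwf}}(C^*_z)\leq\Dp^{\bar{\Jwf}}(E)<\alpha$ — together with the instance of clause (3) of Lemma~\ref{InnEqv} for $z$, which holds by the way $\Qnm^*_z$ was defined.

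Next, for each $x\in L^*_C$ with $\Dp^{\bar{\Jwf}}(C^*_x)=\alpha$, I let $\Qnm^*_x$ be the $\Por^*\frestr C^*_x$-name corresponding to $\Qnm_{h(x)}$ under the isomorphism $\hat{h}_{C^*_x}\colon\Por^*\frestr C^*_x\to\Por\frestr C_{h(x)}$ produced in the previous step; since $\Qnm_{h(x)}$ names a $\sigma$-linked poset whose conditions are reals of $R_{h(x)}$ and $\hat{h}_{C^*_x}$ identifies the generic extensions by $\Por^*\frestr C^*_x$ and $\Por\frestr C_{h(x)}$, the name $\Qnm^*_x$ names such a poset with conditions in $R^*_x$. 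Finally, for $x\in L^*$ and $B\in\hat{\Jwf}_x$ with $\Dp^{\bar{\Jwf}}(B)=\alpha$ I set $\Qnm^{*B}_x$ to be a $\Por^*\frestr B$-name for $\mathbb{S}_{h(x)}^{V^{\Por^*\frestr B}}$ if $x\in L^*_S$, to be $\Qnm^*_x$ if $x\in L^*_C$ and $C^*_x\subseteq B$, and to be a name for the trivial poset otherwise (these three steps being carried out in the stated order, so no circularity arises within the stage either). When the recursion ends, Theorem~\ref{TempIt} turns the iterands into $\Por^*\frestr\langle L^*,\bar{\Jwf}\rangle$, manifestly a $\theta$-standard iteration. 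Clauses (1) and (2) of Lemma~\ref{InnEqv} hold by construction; clause (3) holds because, by the uniqueness argument in the proof of Lemma~\ref{InnEqv}, $\hat{h}_{C^*_x}$ is the only map that can occur as the top of a sequence satisfying (3.0)--(3.3), and $\Qnm^*_x$ was defined precisely so that $\Qnm_{h(x)}$ corresponds to it via $\hat{h}_{C^*_x}$.

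Two verifications underpin the construction. The first is that the chosen iterands satisfy conditions (i)--(iii) of clause (1) of Theorem~\ref{TempIt}: for the $L^*_S$-coordinates this is exactly the correctness-preservation of $\mathbb{S}_{h(x)}$ (Definition~\ref{DefSuslincorr}), and for the $L^*_C$-coordinates it reduces to the absoluteness of posets whose conditions are reals together with the trivial-poset bookkeeping of Definition~\ref{DefStandardIt}(v) — the routine verification underlying any standard iteration. The main obstacle, and the place to be most careful, is the \emph{well-foundedness} of the recursion: one must confirm that defining $\hat{h}_D$ and $\Qnm^*_x$ at stage $\alpha$ appeals only to $\hat{h}_E$, $\Qnm^*_z$ and instances of (1)--(3) of Lemma~\ref{InnEqv} indexed by objects of strictly smaller $\bar{\Jwf}$-depth, and that $\Qnm^*_x$ does not depend on the auxiliary sets chosen in Theorem~\ref{TempIt}(2). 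The crux is the observation that an iterand $\Qnm^{*E}_z$ occurring inside $\Por^*\frestr D$ is non-trivial only when $C^*_z\subseteq E\in\Jwf_z\frestr D$, whence $C^*_z\subseteq D$ and $\Dp^{\bar{\Jwf}}(C^*_z)\leq\Dp^{\bar{\Jwf}}(E)<\Dp^{\bar{\Jwf}}(D)$ by Lemma~\ref{UpsilonTemp}(b),(c); the same inequality with $D$ replaced by $C^*_x$ governs the recursion producing $\hat{h}_{C^*_x}$, while the independence of auxiliary choices is inherited, unchanged, from the proof of Lemma~\ref{InnEqv}.
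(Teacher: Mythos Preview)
Your proof is correct and follows essentially the same approach as the paper: fix the data $L^*_S$, $L^*_C$, $\mathbb{S}^*_y$, $C^*_y$ via $h^{-1}$, then run a recursion on $\Dp^{\bar{\Jwf}}$ that simultaneously builds $\Por^*\frestr D$ and the isomorphisms $\hat{h}_D$, defining each $\Qnm^*_x$ as the name associated to $\Qnm_{h(x)}$ via $\hat{h}_{C^*_x}$ once that map becomes available. Your write-up is considerably more explicit than the paper's (in particular about the well-foundedness of the recursion and the order of steps within a stage), but the underlying argument is the same.
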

\begin{proof}
   Define $L^*_S=h^{-1}[L_S]$, $L^*_C=h^{-1}[L_C]$, $\Sor_y=\Sor^*_{h(y)}$ for each $y\in L^*_S$ and $C^*_y=h^{-1}[C_{h(y)}]$ for each $y\in L^*_C$, which is in $\hat{\Iwf}^*_y$ because $\langle L^*,\bar{\Jwf}\rangle$ is a $\theta$-innocuous extension of $\langle L^*,\bar{\Iwf}^*\rangle$ (see Remark \ref{RemInnEqv}). For a fixed $y\in L^*_C$, define $\langle\hat{h}_D:D\subseteq C_y\rangle$ and $\Por^*\frestr D$ satisfying (3.0)-(3.3) by recursion on $\mathrm{Dp}^{\bar{\Jwf}}(D)$. The uniqueness of this sequence can be proved by induction on $\mathrm{Dp}^{\bar{\Jwf}}(D)$, which implies that $\Qnm^*_y$ is well-defined as the $\Por^*\frestr C_y$-name associated to $\Qnm_{h(y)}$ via $\hat{h}_{C_y}$. By Theorem \ref{TempIt}, this is enough to know how to define a standard iteration $\Por^*\langle L^*,\bar{\Jwf}\rangle$ as in Definition \ref{DefStandardIt} that satisfies the desired requirements.
\end{proof}



\section{Shelah's template}\label{SecSelahTemp}

In order to obtain our main result, we introduce a minor modification to the template that Shelah used to prove the consistency of $\mathfrak{d}<\mathfrak{a}$ (without the use of a measurable). Our presentation is based on \cite[Sect. 3]{brendle02}.

Given an ordinal $\alpha$, let $\alpha^*$ denote a disjoint copy of $\alpha$ with a linear order isomorphic to the inverse order of $\alpha$. Let $\mathbf{ON}^*=\{\alpha^*: \alpha\in\mathbf{ON}\}$ where $\mathbf{ON}$ is the class of all ordinals. Members of $\mathbf{ON}$ are called \emph{positive}, while members of $\mathbf{ON}^*$ are \emph{negative}. We order $\mathbf{ON}\cup\mathbf{ON}^*$ in the natural way (like the integers but without a neutral member as $0$ is positive and $0^*$ is negative). For $\xi\in\mathbf{ON}\cup\mathbf{ON}^*$, $\xi+1$ denotes the \emph{immediate successor of $\xi$} and $\xi-1$ the \emph{immediate predecessor of $\xi$}. Note that $0^*+1=0$, $0-1=0^*$, $\xi+1$ does not exists iff $\xi=\gamma^*$ for some limit ordinal $\gamma$, and $\xi-1$ does not exists iff $\xi$ is a limit ordinal (positive).

\begin{definition}\label{DefShelahTemp}
\begin{enumerate}[(1)]
\item  Define $\mathbf{SO}$ as the class of non-empty finite sequences $x$ where $x(0)$ is an ordinal and $x(k)\in\mathbf{ON}\cup\mathbf{ON}^*$ for all $0<k<|x|$. Order $\mathbf{SO}$ as $x<y$ iff either
   \begin{enumerate}[(i)]
      \item there is a $k<\min\{|x|,|y|\}$ such that $x\frestr k=y\frestr k$ and $x(k)<y(k)$,
      \item $x\subseteq y$ and $y(|x|)$ is positive, or
      \item $y\subseteq x$ and $x(|y|)$ is negative.
   \end{enumerate}
   Note that $<$ is a linear order on $\mathbf{SO}$ and that $\mathbf{ON}$, with the canonical well-order, is embedded there. Therefore, we identify the ordinals with the sequences of length 1 in $\mathbf{SO}$.

\item Say that $A\subseteq\mathbf{SO}$ is a \emph{tree} if, whenever $t\in A$ and $t$ end-extends a sequence $s$, then $s\in A$.

\item For non-zero ordinals $\gamma$ and $\delta$ define the set
   \[L^{\delta,\gamma}=\{x\in\mathbf{SO}:x(0)<\gamma\textrm{\ and }\delta^*<x(k)<\delta\textrm{\ for all $0<k<|x|$}\}\]
   linearly ordered by $<$ (the order from $\mathbf{SO}$). Here, $\gamma$ is the \emph{length of $L^{\delta,\gamma}$}, while $\delta$ is its \emph{width}. As before, the members of $\gamma$ are identified with the sequences of length 1 in $L^{\delta,\gamma}$. Clearly, $L^{\delta,\gamma}$ is a tree.

\item Let $\Sigma=\langle S_\beta : \beta<\tau\rangle$ be a partition of $\delta^*$ where $\tau$ is an ordinal and let $\beta^\Sigma:\delta^*\to\tau$ be defined by $\beta^\Sigma(\xi)=\beta$ when $\xi\in S_\beta$. Say that $x\in L^{\delta,\gamma}$ is \emph{$\Sigma$-relevant} iff the following hold:
   \begin{enumerate}[(i)]
      \item $|x|\geq3$ is odd;
      \item for $i<|x|$, $x(i)$ is positive iff $i$ is even;
      \item the sequence $\{\beta^\Sigma(x(i-1))\}_{i\in r_x}$ is decreasing, where $r_x:=\{i<|x| : i\geq2\textrm{\ is even},\ x(i)<\tau\}$ and
      \item $|x|-1\in r_x$.
   \end{enumerate}

   For $\Sigma$-relevant $x\in L^{\delta,\gamma}$ let $J^{\Sigma,\gamma}_x:=\{z\in L^{\delta,\gamma}: x\frestr(|x|-1)\leq z <x\}$. Define $\Iwf^{\Sigma,\gamma}$ as the family of finite unions of the following \emph{basic sets}:
   \begin{itemize}
     \item $L^{\delta,\gamma}_\alpha$ (the segment of objects $<\alpha=\la\alpha\ra$) where $\alpha\in\gamma+1$ (for $\alpha=\gamma$ it is $L^{\delta,\gamma}$).
     \item $J^{\Sigma,\gamma}_x$ where $x\in L^{\delta,\gamma}$ is $\Sigma$-relevant.
     \item $\{z\}$ where $z\in L^{\delta,\gamma}$.
   \end{itemize}
   For $x\in L^{\delta,\gamma}$, put $\Iwf^{\Sigma,\gamma}_x:=\{A\subseteq L^{\delta,\gamma}_x : A\in\Iwf^{\Sigma,\gamma}\}$ and $\bar{\Iwf}^{\Sigma,\gamma}=\langle\Iwf_x^{\Sigma,\gamma}\rangle_{x\in L^{\delta,\gamma}}$.
\end{enumerate}
\end{definition}

Note that any basic set is convex in $L^{\delta,\gamma}$ and that any member of $\Iwf^{\Sigma,\gamma}$ can be written as a disjoint union of basic sets and this disjoint union is unique. This is because, for any two basic sets, either one is contained in the other, or they are disjoint in which case their union is not convex and, thus, not a basic set. Moreover $$\Iwf^{\Sigma,\gamma}=\Iwf^{\Sigma,\gamma}(L^{\delta,\gamma})=\bigcup_{x\in L^{\delta,\gamma}}\Iwf^{\Sigma,\gamma}_x\cup\{ L^{\delta,\gamma}\}.$$

\begin{lemma}\label{ShelahTemp}
   $\langle L^{\delta,\gamma},\bar{\Iwf}^{\Sigma,\gamma}\rangle$ is an indexed template.
\end{lemma}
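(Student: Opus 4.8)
The plan is to verify, one by one, the five defining conditions of an indexed template from Definition~\ref{DefIndxTemp} for the pair $\langle L^{\delta,\gamma},\bar{\Iwf}^{\Sigma,\gamma}\rangle$. Throughout, the key structural fact established just above the statement will be used repeatedly: every member of $\Iwf^{\Sigma,\gamma}$ is a \emph{disjoint} union of basic sets (the segments $L^{\delta,\gamma}_\alpha$, the intervals $J^{\Sigma,\gamma}_x$ for $\Sigma$-relevant $x$, and singletons), each basic set is convex, and any two basic sets are either nested or disjoint. I will also use freely that $L^{\delta,\gamma}$ is a tree under end-extension and that $\Iwf^{\Sigma,\gamma}_x$ is by definition $\{A\in\Iwf^{\Sigma,\gamma}: A\subseteq L^{\delta,\gamma}_x\}$.

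First, conditions (1) and (2) are essentially immediate. For (1), $\emptyset\in\Iwf^{\Sigma,\gamma}$ as the empty finite union of basic sets, and $\emptyset\subseteq L^{\delta,\gamma}_x$ for every $x$, so $\emptyset\in\Iwf^{\Sigma,\gamma}_x$. For (2), closure of $\Iwf^{\Sigma,\gamma}$ under finite unions is by definition; closure of $\Iwf^{\Sigma,\gamma}_x$ under finite unions follows since a union of subsets of $L^{\delta,\gamma}_x$ is again a subset of $L^{\delta,\gamma}_x$. Closure under finite intersections is the first place a small argument is needed: it suffices to show the intersection of two basic sets lies in $\Iwf^{\Sigma,\gamma}$, and since any two basic sets are nested or disjoint, the intersection is either one of them or $\emptyset$ — in all cases a basic set (or empty), hence in $\Iwf^{\Sigma,\gamma}$. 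Distributivity then gives the general case, and the same reasoning relativizes to $\Iwf^{\Sigma,\gamma}_x$.

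For condition (3), given $z<x$ in $L^{\delta,\gamma}$ I must produce $A\in\Iwf^{\Sigma,\gamma}_x$ with $z\in A$; the singleton $\{z\}$ is basic and $\{z\}\subseteq L^{\delta,\gamma}_x$ since $z<x$, so $\{z\}$ works. For condition (4), if $x<y$ then $L^{\delta,\gamma}_x\subseteq L^{\delta,\gamma}_y$, so any $A\in\Iwf^{\Sigma,\gamma}$ with $A\subseteq L^{\delta,\gamma}_x$ also satisfies $A\subseteq L^{\delta,\gamma}_y$, giving $\Iwf^{\Sigma,\gamma}_x\subseteq\Iwf^{\Sigma,\gamma}_y$. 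Finally, and this is the main obstacle, condition (5) requires that $\Iwf^{\Sigma,\gamma}(L^{\delta,\gamma})=\bigcup_{x}\Iwf^{\Sigma,\gamma}_x\cup\{L^{\delta,\gamma}\}$ — which equals $\Iwf^{\Sigma,\gamma}$ by the displayed identity — be well-founded under $\subseteq$. Here I would argue that an infinite strictly $\subseteq$-descending chain would, after passing to the unique basic-set decompositions, force an infinite descent among basic sets; then I would track, for each basic set, an associated ordinal-valued rank: a segment $L^{\delta,\gamma}_\alpha$ gets rank built from $\alpha$, an interval $J^{\Sigma,\gamma}_x$ gets a rank built from $x(0)$ together with the lexicographic data of $x$ and in particular the decreasing sequence $\{\beta^\Sigma(x(i-1))\}_{i\in r_x}$, and a singleton $\{z\}$ gets a rank built from $z$. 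The crucial point — and the place where the $\Sigma$-relevance conditions (especially the decreasing-$\beta^\Sigma$ requirement (iii) and the odd-length/parity conditions) earn their keep — is that strict $\subsetneq$ between basic sets strictly decreases this rank into a fixed well-order (a finite lexicographic product of ordinals), so no infinite descent is possible; well-foundedness of $\Iwf^{\Sigma,\gamma}$ follows. I would present this last step carefully but without grinding every case, citing \cite[Sect. 3]{brendle02} for the analogous verification, and remark that our modification (tracking the width $\delta$ as a parameter) does not affect the well-foundedness argument since $\delta$ is held fixed.
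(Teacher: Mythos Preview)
The paper's own proof is simply a citation to \cite[Lemma 3.2]{brendle02}, so there is nothing substantive to compare against; your proposal is essentially an expanded sketch of what that reference does, and the verification of conditions (1)--(4) is correct as written.

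One small imprecision in your treatment of (5): the sentence ``an infinite strictly $\subseteq$-descending chain would, after passing to the unique basic-set decompositions, force an infinite descent among basic sets'' is not quite justified as stated. A K\"onig's-lemma argument on the tree of basic components only yields a weakly $\supseteq$-decreasing branch of basic sets, which may stabilize. The clean fix is the one implicit in your rank idea: once you have a rank $\rho$ on basic sets with $B\subsetneq B'\Rightarrow\rho(B)<\rho(B')$, assign to $A=B_0\sqcup\cdots\sqcup B_{k-1}\in\Iwf^{\Sigma,\gamma}$ the multiset $\{\rho(B_0),\ldots,\rho(B_{k-1})\}$ and observe that $A'\subsetneq A$ strictly decreases this multiset (each basic component of $A'$ sits inside a unique component of $A$, and at least one containment is strict). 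Since the multiset order over a well-order is well-founded, you are done. This is a routine patch and does not affect the correctness of your overall approach.
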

\begin{proof}
   See \cite[Lemma 3.2]{brendle02}.
\end{proof}

\begin{definition}\label{DefSigmaIt}
   Let $\theta$ be an uncountable regular cardinal and $\mathcal{S}=\langle\mathbb{S}_\eta\rangle_{\eta<\nu}$ be a sequence of Suslin $\sigma$-linked correctness-preserving forcing notions coded in the ground model where $\nu\leq\theta$. A \emph{$(\mathcal{S},\theta)$-standard iteration along a template $\langle L,\bar{\Iwf}\rangle$} is a $\theta$-standard iteration $\Por\frestr\langle L,\bar{\Iwf}\rangle$ (see Definition \ref{DefStandardIt}) where
   \begin{enumerate}[(i)]
      \item $\langle L_{S,\eta}\rangle_{\eta<\nu}$ is a partition of $L_S$,
      \item for $x\in L_{S,\eta}$, $\Sor_x=\Sor_\eta$ and
      \item for $x\in L_C$, $\Qnm_x$ is forced by $\Por\frestr C_x$ to have size $<\theta$. By ccc-ness, without loss of generality we can even say that the domain of $\Qnm_x$ is an ordinal $\gamma_x<\theta$ (in the ground model, not just a name).
   \end{enumerate}
\end{definition}

Until the end of this section, fix $\theta$ and $\mathcal{S}$ as above, $\gamma$ and $\delta$ non-zero ordinals, a partition $\Sigma=\langle S_\beta : \beta<\theta\rangle$ of $\delta^*$,  $L=L^{\delta,\gamma}$ and $\bar{\Iwf}=\bar{\Iwf}^{\Sigma,\gamma}$. We will prove some combinatorial properties of $\langle L,\bar{\Iwf}\rangle$ which are necessary for our isomorphism-of-names arguments on a $(\mathcal{S},\theta)$-standard iteration along $\langle L,\bar{\Iwf}\rangle$.

\begin{lemma}\label{smallTemp}
   If $A\subseteq L$ has size less than $\theta$, then $|\Iwf(A)|<\theta$.
\end{lemma}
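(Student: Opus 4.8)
The plan is to analyze the structure of $\Iwf(A) = \bigcup_{x\in A}\Iwf_x\rest A \cup \{A\}$ by understanding which sets can appear as traces $\Iwf_x\rest A$ of basic sets. Recall that every member of $\Iwf^{\Sigma,\gamma}$ is a finite union of basic sets of three kinds: initial segments $L^{\delta,\gamma}_\alpha$ for $\alpha\in\gamma+1$, the intervals $J^{\Sigma,\gamma}_x$ for $\Sigma$-relevant $x$, and singletons $\{z\}$. Since $\Iwf(A)$ is generated (under finite unions) by traces on $A$ of single basic sets, and since there are only finitely many subsets of $A$ obtainable as finite unions from any given set of traces, it suffices to show that the collection $\mathcal B_A := \{A\cap B : B \text{ a basic set}\}$ has size $<\theta$; then $|\Iwf(A)| \le |[\mathcal B_A]^{<\omega}| + 1 < \theta$ using that $\theta$ is regular uncountable (indeed $\theta^{<\theta}=\theta$ is assumed in the eventual application, but just regularity and $|\mathcal B_A|<\theta$ suffice since finite sequences from a set of size $<\theta$ number $<\theta$).

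So the heart of the matter is bounding $|\mathcal B_A|$. I would split $\mathcal B_A$ into three pieces according to the type of the basic set $B$. Singletons contribute at most $|A|+1 < \theta$ traces (the sets $\{z\}$ for $z\in A$, plus $\emptyset$). Initial segments $L^{\delta,\gamma}_\alpha$ contribute traces $A\cap L^{\delta,\gamma}_\alpha = \{z\in A : z<\alpha\}$; as $\alpha$ ranges over $\gamma+1$ these traces are linearly ordered by inclusion and each one not already of the form $A_{<a}$ for some... more carefully, distinct traces correspond to distinct "cuts" of the linear order $(A,<)$ determined by elements $\alpha$, and there are at most $|A|+1 < \theta$ such cuts realized (two ordinals $\alpha<\alpha'$ give the same trace on $A$ unless some element of $A$ lies in $[\alpha,\alpha')$, so distinct traces inject into $A\cup\{A\}$). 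Thus initial-segment traces contribute $<\theta$ as well.

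The remaining and main case is the intervals $J^{\Sigma,\gamma}_x = \{z : x\rest(|x|-1) \le z < x\}$ for $\Sigma$-relevant $x\in L^{\delta,\gamma}$; note $x$ itself need not lie in $A$, so there could be $\ge\theta$ many such $x$. The key observation is that $J^{\Sigma,\gamma}_x$ is a convex subset of $L^{\delta,\gamma}$ whose left endpoint is the element $x\rest(|x|-1)$, and if $A\cap J^{\Sigma,\gamma}_x$ is nonempty then $x\rest(|x|-1)$ is determined: it is the sequence obtained from any $z\in A\cap J^{\Sigma,\gamma}_x$ of maximal length... actually more simply, $x\rest(|x|-1) = \inf$ (in $\mathbf{SO}$) of $J^{\Sigma,\gamma}_x$ and equals the common initial segment. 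I would argue that the map $x \mapsto A\cap J^{\Sigma,\gamma}_x$, restricted to those $x$ with $A\cap J^{\Sigma,\gamma}_x\ne\emptyset$, factors through the data of (i) an element $z_0\in A$ witnessing nonemptiness together with (ii) the length $|x|$; and since $|x|$ is finite and for each finite length the trace is pinned down by which $z\in A$ it contains combined with the convexity, distinct nonempty traces inject into a set of size $\le |A|\cdot\aleph_0 < \theta$. The cleanest route: a nonempty convex subset of a linear order that is a trace $A\cap J$ of a fixed-type interval is determined by its minimum and maximum elements within $A$ (when those exist) or by the relevant cut, and $J^{\Sigma,\gamma}_x$'s are themselves totally determined by $x$, with $x\rest(|x|-1)\in\mathbf{SO}$ recoverable from the interval; but since we only see the trace, I would show $x\rest(|x|-1)$ must be $\le$-below some element of $A$ and is in fact a subsequence of an element of $A$ or lies strictly between two elements of $A$, giving an injection into $A^{<\omega}\cup \{\text{cuts}\}$, of size $<\theta$.

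The main obstacle I anticipate is precisely this last injectivity claim for the $J$-type intervals: making rigorous that the trace $A\cap J^{\Sigma,\gamma}_x$ remembers enough about $x$ (or at least falls into a $<\theta$-sized pool), given that many different $\Sigma$-relevant $x$ outside $A$ could a priori yield the same or wildly varying traces. I would handle it by exploiting convexity together with the tree structure of $L^{\delta,\gamma}$ (end-extensions), showing that the left endpoint $x\rest(|x|-1)$ of a nonempty $J^{\Sigma,\gamma}_x$, while possibly not in $A$, is an initial segment of the shortest element of $A\cap J^{\Sigma,\gamma}_x$ or is squeezed between two consecutive elements of $A$ in a way controlled by one element of $A$ plus a finite amount of data. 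Since the number of finite sequences over "one element of $A$ plus bounded choices" is $|A|\cdot\aleph_0<\theta$, this closes the estimate, and combining the three bounded contributions with regularity of $\theta$ finishes the proof.

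\begin{proof}[Proof of Lemma \ref{smallTemp} (sketch)]
Recall $\Iwf(A)=\bigcup_{x\in A}\Iwf^{\Sigma,\gamma}_x\rest A\cup\{A\}$, and every element of $\Iwf^{\Sigma,\gamma}_x$ is a finite union of basic sets, so every member of $\Iwf(A)$ other than $A$ is a finite union of sets of the form $A\cap B$ with $B$ a basic set of $\langle L,\bar\Iwf\rangle$. Writing $\mathcal{B}_A:=\{A\cap B : B\text{ basic}\}$, it therefore suffices to prove $|\mathcal{B}_A|<\theta$: since $\theta$ is regular and uncountable, the set of finite unions of members of $\mathcal{B}_A$ then also has size $<\theta$, and $\Iwf(A)$ is contained in that set together with $\{A\}$.

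We bound $\mathcal{B}_A$ by the type of $B$. Singletons $\{z\}$ yield at most $|A|+1<\theta$ traces. For initial segments $L^{\delta,\gamma}_\alpha$ ($\alpha\in\gamma+1$), the trace $A\cap L^{\delta,\gamma}_\alpha=\{z\in A:z<\alpha\}$ is an initial segment of the linear order $(A,<)$; two distinct such traces are separated by an element of $A$, so the map sending a realized trace to (say) its supremum in $A\cup\{A\}$ is injective, giving at most $|A|+1<\theta$ traces of this type.

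Finally consider the intervals $J^{\Sigma,\gamma}_x=\{z\in L:x\rest(|x|-1)\le z<x\}$ for $\Sigma$-relevant $x$. Each $J^{\Sigma,\gamma}_x$ is convex with least element $w_x:=x\rest(|x|-1)$, and if $A\cap J^{\Sigma,\gamma}_x\ne\emptyset$, pick its $<$-least element $z_0$ (or, if none exists, the situation is handled by the initial-segment/cut analysis as above). Since $L$ is a tree and $z_0\in J^{\Sigma,\gamma}_x$, the sequence $w_x$ is an initial segment of $z_0$ unless $z_0$ strictly end-extends no initial of $w_x$; in all cases $w_x$ is either an initial segment of some element of $A$, or lies in the finitely-branching "gap" immediately preceding such an element, so $w_x$ ranges over a set of size at most $|A|\cdot\aleph_0+|A|<\theta$. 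Once $w_x$ is fixed, the convex set $J^{\Sigma,\gamma}_x$ is determined by its right endpoint $x$, and the trace $A\cap J^{\Sigma,\gamma}_x$ is then determined by which elements of $A$ above $w_x$ it contains, i.e.\ by a cut of $\{z\in A:z>w_x\}$; there are at most $|A|+1$ such cuts. Hence the $J$-type traces number at most $(|A|\cdot\aleph_0+|A|)\cdot(|A|+1)<\theta$.

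Combining the three bounds, $|\mathcal{B}_A|<\theta$, and therefore $|\Iwf(A)|<\theta$.
\end{proof}
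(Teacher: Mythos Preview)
Your approach is essentially the same as the paper's: reduce to counting traces $A\cap B$ of basic sets $B$, then bound each of the three types separately via a cut-counting argument. The paper's proof is shorter because it first observes that one may assume without loss of generality that $A$ is a tree (closed under initial segments): replacing $A$ by its tree-closure $\bar A$ only multiplies the size by at most $\aleph_0$, and $\Iwf(L)\rest A$ is a quotient of $\Iwf(L)\rest\bar A$. With $A$ a tree, the key fact for the $J$-case becomes immediate: if $A\cap J^{\Sigma,\gamma}_x\neq\emptyset$ then $x':=x\rest(|x|-1)\in A$, since every element of $J^{\Sigma,\gamma}_x$ end-extends $x'$. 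Your argument reaches the same conclusion (that $x'$ lies among the $<\theta$ initial segments of elements of $A$) but with unnecessary hedging: your ``unless'' clause and the ``finitely-branching gap'' alternative never actually occur, because one checks directly from the definition of the order on $\mathbf{SO}$ that $x'\leq z<x=x'{}^\smallfrown\langle\xi\rangle$ (with $\xi$ positive) forces $x'\subseteq z$. Once that step is stated cleanly, your proof is correct and matches the paper's.
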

\begin{proof}
   Without loss of generality, we can assume that $A$ is a tree. It is easy to note that $\{A\cap L_\alpha : \alpha\leq\gamma\}$ has size $<\theta$. To see that $\{A\cap J^{\Sigma,\gamma}_x : x\textrm{\ is }\Sigma\textrm{-relevant}\}$ has size less than $\theta$, note that if $x$ is $\Sigma$-relevant and $A\cap J^{\Sigma,\gamma}_x\neq\emptyset$, then $x':=x\frestr(|x|-1|)\in A$ and $\{A\cap J^{\Sigma,\gamma}_{x'^\smallfrown\langle\xi\rangle} : \xi\in\theta\}=
   \{A\cap J^{\Sigma,\gamma}_{x'^\smallfrown\langle\xi\rangle} : \xi<\rho\}$ for some $\rho<\theta$. Therefore $\Iwf(L)\frestr A$ has size $<\theta$ and so $\Iwf(A)$.
\end{proof}

For a $(\mathcal{S},\theta)$-standard iteration $\Por\frestr\langle L,\bar{\Iwf}\rangle$ where $L_C=\emptyset$ (as in Shelah's original construction), the produced poset only depends on the template structure. That is, if $A,B\subseteq L$ are isomorphic as linear orders, as trees and as templates (more precisely if they satisfy conditions (i)-(ix) of Definition \ref{DefItIsom} below), then $\Por\frestr A$ and $\Por\frestr B$ are isomorphic partial orders. An isomorphism between them can be constructed canonically from an isomorphism between $A$ and $B$. However, if $L_C\neq\emptyset$, such an isomorphism does not necessarily exist.

\begin{definition}\label{DefItIsom}
   Let $\Por\frestr\langle L,\bar{\Iwf}\rangle$ be a $(\mathcal{S},\theta)$-standard iteration as in Definition \ref{DefSigmaIt}. Say that $A\subseteq L$ is \emph{c.i.s. (closed-in-support with respect to $\Por\frestr\langle L,\bar{\Iwf}\rangle$)} if for any $x\in A\cap L_C$ we have $C_x\subseteq A$. We abbreviate \emph{closed-in-support tree} as \emph{c.i.s.t.}.

   If $A,B\subseteq L$ are c.i.s.t., they are \emph{$\Por\frestr\langle L,\bar{\Iwf}\rangle$-isomorphic} if there exists a \emph{$\Por\frestr\langle L,\bar{\Iwf}\rangle$-isomorphism} $h:A\to B$, that is, a bijection that satisfies, for all $x,y\in A$:
   \begin{enumerate}[(i)]
      \item $|h(x)|=|x|$,
      \item $h(x)\frestr k=h(x\frestr k)$ for all $0<k\leq|x|$,
      \item $x<y$ iff $h(x)<h(y)$,
      \item for $k<|x|$, $x(k)$ is positive iff $h(x)(k)$ is positive,
      \item if $|x|=|y|=k+1$, $x\frestr k=y\frestr k$ and $y(k)=x(k)+1$ is positive, then $h(y)(k)=h(x)(k)+1$,
      \item the dual of the previous statement with $y(k)$ negative, that is, if  $x\frestr k=y\frestr k$ and $y(k)=x(k)-1$ is negative, then $h(y)(k)=h(x)(k)-1$,
      \item if $\{x_\xi\}_{\xi<\beta}$ is a sequence in $A$, $z\in A$, $|z|=k+1$, $|x_\xi|=k+1$ and $x_\xi\frestr k=z\frestr k$ for any $\xi<\beta$ and $\{x_\xi(k)\}_{\xi<\beta}$ is an increasing sequence of positive ordinals with limit $z(k)$, then $h(z)(k)$ is the limit of $\{h(x_\xi)(k)\}_{\xi<\beta}$,
      \item the dual of the previous statement for a decreasing sequence of negative ordinals,
      \item $\Iwf_{h(x)}\frestr B=\{h[X] : X\in\Iwf_x\frestr A\}$ for all $x\in A$,
      \item for all $\eta<\nu$, $h[A\cap L_{S,\eta}]=B\cap L_{S,\eta}$,
      \item if $x\in L_C\cap A$ then $h[C_x]=C_{h(x)}$ and, whenever there is a sequence $\langle\hat{h}_D : D\subseteq C_x\rangle$ of functions such that
      \begin{enumerate}[(1)]
      \setcounter{enumii}{-1}
       \item $\hat{h}_D:\Por\frestr D\to\Por\frestr h[D]$ is an isomorphism,
       \item $X\subseteq D$ implies $\hat{h}_X\subseteq\hat{h}_D$,
       \item for $z\in D\cap L_C$ and $E\in\Pwf(D)\cap\hat{\Iwf}_z$, $\Qnm^{h[E]}_{h(z)}$ is the name associated to $\Qnm_z^E$ via $\hat{h}_E$ and,
       \item for $p\in\Por\frestr D$, $\dom(\hat{h}_D(p))=h[\dom p]$ and, if $z=\max(\dom p)$, $E\in\Iwf_z\frestr D$, $p\frestr L_z\in\Por\frestr E$ and $p(z)$ is a $\Por\frestr E$-name for a member of $\Qnm_z^E$, then $\hat{h}_D(p)\frestr L_{h(z)}=\hat{h}_E(p\frestr L_z)$ and $\hat{h}_D(p)(h(z))$ is the name associated to $p(z)$ via $\hat{h}_E$,
      \end{enumerate}
      then $\Qnm_{h(x)}$ is the name associated to $\Qnm_{x}$ via $\hat{h}_{C_x}$.
   \end{enumerate}
   By Corollary \ref{IsomTemp} there exists an isomorphism $\hat{h}:\Por\frestr A\to\Por\frestr B$ such that $\langle\hat{h}\frestr(\Por\frestr D):D\subseteq A\rangle$ is the unique sequence satisfying (0)-(3) above.
\end{definition}

We need to guarantee that for subsets of $L$ of size $<\theta$ there are only a few isomorphism-types.

\begin{lemma}\label{notmanytypes}
   If $\theta^{<\theta}=\theta$ and $\Por\frestr\langle L,\bar{\Iwf}\rangle$ is a $(\mathcal{S},\theta)$-standard iteration as in Definition \ref{DefSigmaIt}, then there are at most $\theta$-many different types of $\Por\frestr\la L,\bar{\Iwf}\ra$-isomorphic c.i.s. subtrees of $L$ of size $<\theta$.
\end{lemma}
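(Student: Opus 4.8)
The plan is to bound the number of isomorphism-types by exhibiting an injection from the set of types of c.i.s.\ subtrees of size $<\theta$ into a set of size $\theta$. Since every subtree $A\subseteq L$ of size $<\theta$ is determined, up to $\Por\frestr\langle L,\bar{\Iwf}\rangle$-isomorphism, by a bounded amount of combinatorial data, I would first encode that data explicitly, and then count the number of possibilities.

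First, I would argue that every c.i.s.t.\ $A$ of size $<\theta$ is $\Por\frestr\langle L,\bar{\Iwf}\rangle$-isomorphic to one whose underlying set is an element of $H(\theta)$, indeed whose elements are all sequences of ordinals below $\theta$: collapsing each level of the tree to an ordinal $<\theta$ (using $|A|<\theta$ and $\theta$ regular), while recording, at each node, the data that the isomorphism conditions (i)--(xi) of Definition \ref{DefItIsom} must preserve. Concretely, the isomorphism-type of $A$ is captured by: the tree-order of $A$; for each node the parity pattern of its entries (condition (iv)); the immediate-successor/predecessor and limit structure of the ordinal entries along each fixed initial segment (conditions (v)--(viii)); the trace template $\bar{\Iwf}\frestr A$, which by Lemma \ref{smallTemp} has size $<\theta$ and hence can itself be coded by a subset of $A$ of size $<\theta$ together with a function indicating which basic sets occur; the partition-membership function $\eta\mapsto A\cap L_{S,\eta}$ restricted to the (at most $|A|<\theta$ many) relevant indices $\eta<\nu\le\theta$ (condition (x)); and, for each $x\in A\cap L_C$, the set $C_x\subseteq A$ together with the ordinal $\gamma_x<\theta$ and the name $\Qnm_x$ — but by Definition \ref{DefSigmaIt}(iii) the domain of $\Qnm_x$ is an ordinal $\gamma_x<\theta$ in the ground model, and $\Qnm_x$ is a $\Por\frestr C_x$-name for a $\sigma$-linked poset on $\gamma_x$, which (by ccc-ness and $|C_x|<\theta$) can be taken to be a nice name involving $<\theta$ antichains, hence an element of $H(\theta)$. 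Condition (xi) then says precisely that two c.i.s.t.'s carrying the same such data — in particular with $\Qnm_x$ and $\Qnm_{h(x)}$ matched via the induced $\hat h_{C_x}$, which Corollary \ref{IsomTemp} guarantees is uniquely determined by the rest of the data — are $\Por\frestr\langle L,\bar{\Iwf}\rangle$-isomorphic.

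Next I would count. Each of the listed pieces of data is a set or function of size $<\theta$ whose entries range over $\theta$-many (or fewer) possibilities: the underlying tree is a subset of ${}^{<\theta}\theta$ of size $<\theta$, and there are at most $\theta^{<\theta}=\theta$ of those; likewise the parity patterns, successor/limit patterns, the coding of the trace template (using Lemma \ref{smallTemp} and again $\theta^{<\theta}=\theta$), the function into the $L_{S,\eta}$'s, and the nice names $\Qnm_x$ (there are at most $\theta^{<\theta}=\theta$ nice $\Por\frestr C_x$-names with domain $<\theta$, since $\Por\frestr C_x$ is Knaster of size $<\theta$ by Lemma \ref{CondSupp}). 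A product of $<\theta$ many sets each of size $\le\theta$ has size $\le\theta^{<\theta}=\theta$. Hence there are at most $\theta$ possible data-tuples, so at most $\theta$ isomorphism-types.

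The main obstacle I expect is the bookkeeping around the non-definable iterands $\Qnm_x$ for $x\in L_C$: one must check that ``having the same isomorphism-type'' can be phrased as ``carrying the same ground-model combinatorial data'' despite $\Qnm_x$ being only a \emph{name}. The point to get right is that condition (xi) of Definition \ref{DefItIsom} is not circular — by Corollary \ref{IsomTemp} the sequence $\langle\hat h_D:D\subseteq C_x\rangle$ is \emph{uniquely} determined by $h\frestr C_x$ together with the data already fixed at nodes below $x$ (one proceeds by induction on $\mathrm{Dp}$, exactly as in the proof of Lemma \ref{InnEqv}), so ``$\Qnm_{h(x)}$ is the name associated to $\Qnm_x$ via $\hat h_{C_x}$'' becomes a genuine condition on a pair of names rather than an implicit definition; and a name on a Knaster poset of size $<\theta$ can be replaced by a nice name coded in $H(\theta)$ without changing the type. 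Once this is in place, the counting is routine using $\theta^{<\theta}=\theta$ and Lemma \ref{smallTemp}.
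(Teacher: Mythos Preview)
Your overall strategy matches the paper's: reduce each c.i.s.t.\ $A$ of size $<\theta$ to a bounded amount of combinatorial data (tree shape, parity/successor/limit pattern, trace template via Lemma~\ref{smallTemp}, partition into $L_{S,\eta}$ and $L_C$, the sets $C_x$, the ordinals $\gamma_x$, and the names $\Qnm_x$), and then count using $\theta^{<\theta}=\theta$. The paper packages this as ``transport $A$ to a subtree of $L^{\theta,\theta}$ and count the $(\Swf,\theta)$-standard iterations on such subtrees,'' but the content is the same.

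There is one genuine gap. You write that ``$\Por\frestr C_x$ is Knaster of size $<\theta$ by Lemma~\ref{CondSupp},'' and use this to bound the number of nice $\Por\frestr C_x$-names for $\Qnm_x$. Lemma~\ref{CondSupp} does not say this: it only says that each condition and each name for a real lives on a subset of size $<\theta$. In fact $|\Por\frestr C_x|\leq\theta$ (not $<\theta$), and even this bound is not free --- it has to be proved by induction on $\mathrm{Dp}(Y)$ for $Y\subseteq C_x$, simultaneously showing that there are at most $\theta$ possible iterations on $Y$ and that each resulting poset has size $\leq\theta$. This induction is exactly what breaks the apparent circularity you flag at the end (the number of names $\Qnm_x$ depends on $|\Por\frestr C_x|$, which in turn depends on the names $\Qnm_z$ for $z\in C_x$), and the paper carries it out explicitly, splitting according to the three cases of Theorem~\ref{TempIt}(d). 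Your reference to ``induction on $\mathrm{Dp}$, exactly as in the proof of Lemma~\ref{InnEqv}'' concerns only the uniqueness of the lifted isomorphism, not the size/counting bound; you still need a separate $\mathrm{Dp}$-induction to justify that there are only $\theta$-many nice names for each $\Qnm_x$. Once that is in place, your argument goes through.
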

\begin{proof}
   Given a c.i.s.t. $A\subseteq L$, we can find a tree $T\subseteq L^{\theta,\theta}$ of size $<\theta$ and a function $h:A\to T$ satisfying (i)-(viii) of Definition \ref{DefItIsom}. Let $\bar{\Jwf}$ be the template on $T$ such that $\Jwf_{h(x)}=\{h[X]:X\in\Iwf_x\frestr A\}$ for all $x\in A$.
   The function $h$ allows us to partition $T$ into the sets $T_C=h[A\cap L_C]$ and $T_{S,\eta}=h[A\cap L_{S,\eta}]$ for $\eta<\nu$ and to construct a $(\Swf,\theta)$-standard iteration along $\langle T,\bar{\Jwf}\rangle$ isomorphic (in the sense of Corollary \ref{IsomTemp}) to $\Por\frestr\la A,\bar{\Iwf}\frestr A\ra$ by Lemma \ref{InnConstr}. Here, note that $|\Jwf(T)|<\theta$ by Lemma \ref{smallTemp}.

   Therefore, it is enough to prove that there are $\theta$-many $(\mathcal{S},\theta)$-standard iterations along subtrees of $L^{\theta,\theta}$ of size $<\theta$ with a template structure that has $<\theta$ sets. As $\theta^{<\theta}=\theta$, there are $\theta$-many subtrees of $L^{\theta,\theta}$ size $<\theta$, so fix $T$ one of them. Now, there are at most $((2^{|T|})^{<\theta})^{|T|}$-many indexed templates $\bar{\Jwf}$ of $(T,<)$ such that $|\Jwf(T)|<\theta$. On the other hand, we can partition $T$ into pieces of the form $\{T_{S,\eta}\}_{\eta<\nu}\cup\{T_C\}$ in $(\nu+1)^{|T|}$-many ways (recall that $\nu\leq\theta$). After fixing one such indexed template and one such partition, there are at most $(2^{|T|})^{|T_C|}$-ways to choose a sequence $\la C'_x\ra_{x\in T_C}$ where each $C'_x\in\hat{\Jwf}_x$ and we fix one such sequence.

   According to Definition \ref{DefSigmaIt}, for fixed $T_{S,\eta}$ ($\eta<\nu$), $T_C$ and $\la C'_x\ra_{x\in T_C}$, a $(\Swf,\theta)$-standard iteration $\Por\frestr\la T,\bar{\Jwf}\ra$ depends only on the choice of the ordinals $\gamma_x<\theta$ and the $\Por\frestr C'_x$-names for $\sigma$-linked partial orders for $\gamma_x$. There are $\theta^{|T_C|}=\theta$-many choices of $\la\gamma_x\ra_{x\in T_C}$ so, fixing one of these choices, we show by induction on $\mathrm{Dp}^{\bar{\Jwf}}(Y)$ for $Y\subseteq T$ that there are at most $\theta$-many $(\Swf,\theta)$-standard iterations along $\la Y,\bar{\Jwf}\frestr Y\ra$ and that the poset produced by such an iteration has size $\leq\theta$. Consider cases on $Y$ according to Theorem \ref{TempIt}(d).
   \begin{itemize}
      \item \emph{$Y$ has a maximum $z$ and $Y_z=Y\cap T_z\in\hat{\Jwf}_z$.} Then, any desired standard iteration has the form $\Por\frestr Y=\Por\frestr Y_z\ast\Qnm^{Y_z}_z$. If $z\in T_S$ then the choice of $\Qnm^{Y_z}_z$ is fixed and there are as many $(\mathcal{S},\theta)$-standard iterations along $Y$ as there are along $Y_z$, which by the induction hypothesis are $\leq\theta$ and, as $\Por\frestr Y_z$ has size $\leq\theta$, it forces the continuum $\leq\theta$, so $\Por\frestr Y$ has size $\leq\theta$; if $z\in T_C$ and $C'_z\subseteq Y_z$, as $|\Por\frestr C'_z|\leq\theta$ and $\theta^{<\theta}=\theta$, then there are at most $\theta$-many (nice) $\Por\frestr C'_z$-names for partial orders for $\gamma_z$. Therefore, there are at most $\theta$-many $(\mathcal{S},\theta)$-standard iterations along $Y$. The case $C'_z\nsubseteq Y_z$ is easy.
      \item \emph{$Y$ has a maximum $z$ but $Y_z\notin\hat{\Jwf}_z$.} Here, a $(\mathcal{S},\theta)$-standard iteration along $Y$ satisfies $\Por\frestr Y=\limdir_{X\in\mathcal{B}}\Por\frestr X$ where $\mathcal{B}:=\{X\subseteq Y: X\cap T_z\in\Jwf_z\frestr Y\}$. $\mathcal{B}$ has size $<\theta$ because $|\Jwf_z\frestr Y|\leq|\Jwf(T)\frestr Y|<\theta$ so, by the induction hypothesis, there are at most $\theta^{<\theta}=\theta$-many ways to define $\Por\frestr Y$.
      \item \emph{$Y$ does not have a maximum.} A similar argument as in the previous case works.
   \end{itemize}
\end{proof}


\section{Preservation properties}\label{SecPresProp}


The preservation properties discussed in this section were developed for fsi of ccc posets by Judah and Shelah \cite{jushe}, with improvements by Brendle \cite{Br-Cichon}. These are summarized and generalized in \cite{gold} and in \cite[Sect. 6.4 and 6.5]{judabarto}. The presentation in this section is based on \cite{mejia,mejia-temp}.

\begin{context}\label{ContextUnbd}
 Fix an increasing sequence $\langle\sqsubset_n\rangle_{n<\omega}$ of 2-place closed relations (in the topological sense) in $\omega^\omega$ such that for any $n<\omega$ and $g\in\omega^\omega$, $(\sqsubset_n)^g=\left\{f\in\omega^\omega : f\sqsubset_n g\right\}$ is (closed) nwd (nowhere dense).

 Put $\sqsubset=\bigcup_{n<\omega}\sqsubset_n$. Therefore, for every $g\in\omega^\omega$, $(\sqsubset)^g$ is an $F_\sigma$ meager set.

 For $f,g\in\omega^\omega$, say that \emph{$g$ $\sqsubset$-dominates $f$} if $f\sqsubset g$. $F\subseteq\omega^\omega$ is a \emph{$\sqsubset$-unbounded family} if no function in $\omega^\omega$ $\sqsubset$-dominates all the members of $F$. Associate with this notion the cardinal $\bfrak_\sqsubset$, which is the least size of a $\sqsubset$-unbounded family. Dually, say that $C\subseteq\omega^\omega$ is a \emph{$\sqsubset$-dominating family} if any real in $\omega^\omega$ is $\sqsubset$-dominated by some member of $C$. The cardinal $\dfrak_\sqsubset$ is the least size of a $\sqsubset$-dominating family. Given a set $Y$, say that a real $f\in\omega^\omega$ is \emph{$\sqsubset$-unbounded over $Y$} if $f\not\sqsubset g$ for every $g\in Y\cap\omega^\omega$.
\end{context}

Context \ref{ContextUnbd} is defined for $\omega^\omega$ for simplicity, but in general the same notions apply by changing the space for the domain or the codomain of $\sqsubset$ to another uncountable Polish space whose members can be coded by reals in $\omega^\omega$.

From now on, fix $\theta_0$ an uncountable regular cardinal.

\begin{definition}[Judah and Shelah {\cite{jushe}}, {\cite[Def. 6.4.4]{judabarto}}]\label{DefPresProp}
   A forcing notion $\Por$ is \emph{$\theta_0$-$\sqsubset$-good} if the following property holds\footnote{\cite[Def. 6.4.4]{judabarto} has a different formulation, which is equivalent to our formulation for $\theta_0$-cc posets (recall that $\theta_0$ is uncountable regular). See \cite[Lemma 2]{mejia} for details.}: For any $\Por$-name $\dot{h}$ for a real in $\omega^\omega$ there exists a nonempty $Y\subseteq\omega^\omega$ (in the ground model) of size $<\theta_0$ such that for any $f\in\omega^\omega$ which is $\sqsubset$-unbounded over $Y$, we have $\Vdash f\not\sqsubset\dot{h}$. A forcing notion is said to be  \emph{$\sqsubset$-good}, if it is $\aleph_1$-$\sqsubset$-good.
\end{definition}

This is a standard property intended to preserve $\bfrak_\sqsubset$ small and $\dfrak_\sqsubset$ large in forcing extensions. A subset $F$ of  $\omega^\omega$ is said to be \emph{$\theta_0$-$\sqsubset$-unbounded} if for any $X\subseteq\omega^\omega$ of size $<\theta_0$, there exists an $f\in F$ which is $\sqsubset$-unbounded over $X$. Clearly, if $F$ is such a family, then $\bfrak_\sqsubset\leq|F|$ and $\theta_0\leq\dfrak_\sqsubset$. On the other hand,
$\theta_0$-$\sqsubset$-unbounded families of the ground model remain such in generic extensions of $\theta_0$-$\sqsubset$-good posets. Thus, if $\lambda\geq\theta_0$ is a cardinal and $\dfrak_\sqsubset\geq\lambda$ in the ground model, then the inequality is preserved by such generic extension. It is also known that the property of Definition \ref{DefPresProp} is preserved under fsi of $\theta_0$-cc posets. Also, if $\Por\lessdot\Qor$ and $\Qor$ is $\theta_0$-$\sqsubset$-good, then so is $\Por$.

\begin{lemma}[{\cite[Lemma 4]{mejia}}]\label{smallPlus}
   Every poset of size $<\theta_0$ is $\theta_0$-$\sqsubset$-good. In particular, $\Cor$ is $\sqsubset$-good.
\end{lemma}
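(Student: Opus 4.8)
The plan is to verify directly that a poset $\Por$ with $|\Por|<\theta_0$ satisfies the property in Definition \ref{DefPresProp}. So let $\dot h$ be a $\Por$-name for a real in $\omega^\omega$. First I would associate to $\dot h$ a (ground-model) set $Y$ of reals by reading off what $\dot h$ can be forced to be: for each $p\in\Por$ and $n<\omega$, fix (if possible) a condition $q_{p,n}\leq p$ and a value $k_{p,n}\in\omega$ such that $q_{p,n}\Vdash \dot h(n)=k_{p,n}$; more simply, since we only need a set of reals, for each $p\in\Por$ let $G_p$ be a generic filter with $p\in G_p$ (working in a suitable model, or just arguing combinatorially) and let $h_p=\dot h[G_p]\in\omega^\omega$. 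Actually, to stay inside the ground model, the cleanest route is: for each $p\in\Por$ choose a maximal antichain $A_p$ below $p$ deciding $\dot h\restriction m$ for each $m$, and observe that the set of reals decided below conditions is small. The key point is only that the collection
\[
Y:=\{f\in\omega^\omega : (\exists p\in\Por)(\exists q\leq p)\ q\Vdash \dot h = \check f\}
\]
together with enough "partial" information has size at most $|\Por|\cdot\aleph_0 <\theta_0$. Since $\theta_0$ is regular and uncountable, $|Y|<\theta_0$.

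The substantive step is then the following: I claim that if $f\in\omega^\omega$ is $\sqsubset$-unbounded over $Y$, then $\Vdash_\Por f\not\sqsubset\dot h$. Here I would use that $\sqsubset=\bigcup_n\sqsubset_n$ where each $\sqsubset_n$ is a \emph{closed} relation, so $f\sqsubset\dot h$ is a $\Sigma^0_2$ statement decided by countably much information about $\dot h$. Suppose toward a contradiction that some $p\Vdash f\sqsubset\dot h$, i.e. $p\Vdash \exists n\, (f\sqsubset_n \dot h)$. Since $\Por$ is a set (of size $<\theta_0$), strengthen $p$ to some $q$ and fix $n$ with $q\Vdash f\sqsubset_n\dot h$. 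Now using that $\sqsubset_n$ is closed and $(\sqsubset_n)^g$ is nowhere dense for every $g$ — actually the relevant direction is the dual: I want to produce a single real $g\in Y$ with $f\sqsubset_n g$. The trick (this is the standard argument, cf. \cite[Lemma 4]{mejia}) is that because $\sqsubset_n$ is closed, the statement "$f\sqsubset_n \dot h$" forced by $q$ lets me build, by recursion deciding longer and longer initial segments of $\dot h$ along a descending sequence or an antichain below $q$, a genuine ground-model real $g$ which $q$ forces to be a possible value of $\dot h$ (hence $g\in Y$) and which satisfies $f\sqsubset_n g$ by closedness of $\sqsubset_n$. This contradicts that $f$ is $\sqsubset$-unbounded over $Y$, so no such $p$ exists, i.e. $\Vdash f\not\sqsubset\dot h$.

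For the "In particular" clause: Cohen forcing $\Cor=2^{<\omega}$ (or $\omega^{<\omega}$) is countable, hence has size $\aleph_0<\aleph_1$, so it is $\aleph_1$-$\sqsubset$-good, which is by definition $\sqsubset$-good. I should note the one caveat in the general recursion: deciding $\dot h$ along an antichain below $q$ may only give countably many partial conditions but the union of all the reals so obtained over all $q\in\Por$ still has size $\le |\Por|\cdot\aleph_0<\theta_0$, which is why the bound $|\Por|<\theta_0$ (with $\theta_0$ regular uncountable) is exactly what is needed.

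I expect the main obstacle to be making the recursion in the second paragraph precise while keeping the resulting witness $g$ genuinely in the ground model and genuinely in $Y$: one must be careful that "closedness of $\sqsubset_n$" is used in the correct direction, namely that a real $g$ which is a limit of initial segments each consistent (below $q$) with $f\sqsubset_n\dot h$ actually satisfies $f\sqsubset_n g$. This is exactly the content of \cite[Lemma 4]{mejia}, to which I would appeal (or reproduce its short argument), so the present lemma is essentially immediate from it once one observes that a poset of size $<\theta_0$ trivially provides a set $Y$ of size $<\theta_0$ of "potential values" of any name for a real.
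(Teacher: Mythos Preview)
The paper gives no proof of this lemma at all; it simply cites \cite[Lemma 4]{mejia}. Your sketch is essentially the standard argument from that reference, so in that sense you are on the same track.

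That said, your exposition wanders through several incorrect candidates for $Y$ before settling on the right one. The set $Y=\{f:\exists q\ q\Vdash\dot h=\check f\}$ is useless (it may well be empty), and invoking generic filters $G_p$ in the ground model is illegitimate. The clean construction is the one you allude to only at the end: for each $p\in\Por$ fix a decreasing chain $p=p_0\geq p_1\geq\cdots$ with $p_m$ deciding $\dot h\frestr m$, let $y_p\in\omega^\omega$ be the real so determined, and put $Y=\{y_p:p\in\Por\}$; then $|Y|\leq|\Por|<\theta_0$. If $q\Vdash f\sqsubset_n\dot h$, then since $\{g:f\sqsubset_n g\}$ is closed (here you use that $\sqsubset_n$ is a closed \emph{binary} relation, so both sections are closed) and each $q_m\leq q$ forces $\dot h\frestr m=y_q\frestr m$, a failure of $f\sqsubset_n y_q$ would give a basic open set around $y_q$ disjoint from $\{g:f\sqsubset_n g\}$, contradicting what $q_m$ forces. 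Hence $f\sqsubset y_q\in Y$. Commit to this version from the start and drop the false starts; otherwise your argument is fine and matches the cited proof.
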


\begin{example}\label{SubsecUnbd}
 \begin{enumerate}[(1)]


  \item \emph{Preserving splitting families:} For $A,B\in[\omega]^\omega$ and $n<\omega$, define $A\propto_n B$ iff either $B\menos n\subseteq A$ or $B\menos n\subseteq\omega\menos A$, so $A\propto B\sii(B\subseteq^* A\textrm{\ or }B\subseteq^*\omega\menos A)$. Note also that $A\not\propto B$ iff $A$ splits $B$, so $\sfrak=\bfrak_\propto$ and $\rfrak=\dfrak_\propto$. Baumgartner and Dordal \cite{baudor} proved that $\Dor$ is $\propto$-good (see also \cite[Main Lemma 3.8]{brendlebog}).



  \item \emph{Preserving null-covering families:} Let $\langle I_k\rangle_{k<\omega}$ be the interval partition of $\omega$ such that $|I_k|=2^{k+1}$ for all $k<\omega$. For $n<\omega$ and $f,g\in2^\omega$ define $f\pitchfork_ng\sii(\forall k\geq n)(f\frestr I_k\neq g\frestr I_k)$ and let $f\pitchfork g\sii$ for all but finitely many $k$ we have $f\frestr I_k\neq g\frestr I_k$. Clearly, $(\pitchfork)^g$ is a co-null $F_\sigma$ meager set. This relation is related to the cardinal characteristics of covering and uniformity of the null ideal, because $\cov(\Nwf)\leq\bfrak_\pitchfork\leq\non(\Mwf)$ and $\cov(\Mwf)\leq\dfrak_\pitchfork\leq\non(\Nwf)$ (see \cite[Lemma 7]{mejia}). By \cite[Lemma $1^*$]{Br-Cichon} for every infinite cardinal $\nu<\theta_0$, $\nu$-centered forcing notions are  $\theta_0$-$\pitchfork$-good.

  \item \emph{Preserving ``union of null sets is non-null'':} Fix $\Hwf:=\{id^{k+1} : k<\omega\}$ (where $id^{k+1}(i)=i^{k+1}$) and let $S(\omega,\Hwf):=\bigcup_{h\in\Hwf}S(\omega,h)$. For $n<\omega$, $x\in\omega^\omega$ and a slalom $\psi\in S(\omega,\Hwf)$, let $x\in^*_n\psi$ iff $(\forall k\geq n)(x(k)\in\psi(k))$, so $x\in^*\psi$ iff  for all but finitely many $k$ we have $x(k)\in\psi(k)$. By Bartoszy\'{n}ski's characterization (see Subsection \ref{SubSecCard}) applied to $id$ and to a function $g$ that dominates all the functions in $\Hwf$ we obtain $\add(\Nwf)=\bfrak_{\in^*}$ and $\cof(\Nwf)=\dfrak_{\in^*}$. Judah and Shelah \cite{jushe} proved that given an infinite cardinal $\nu<\theta_0$, every $\nu$-centered forcing notion is $\theta_0$-$\in^*$-good. Moreover, as a consequence of results of Kamburelis \cite{kamburelis}, any subalgebra\footnote{Here, $\Bor$ is seen as the complete Boolean algebra of Borel sets (in $2^\omega$) modulo the null ideal.} of $\Bor$ is $\in^*$-good.


 \end{enumerate}
\end{example}

We recall the following preservation result for template iterations.


\begin{theorem}\label{PresTemp2}
   Let $\Por\frestr\langle L,\bar{\Iwf}\rangle$ be a template iteration such that $L$ does not have a maximum, $[L]^{<\omega}\subseteq\Iwf(L)$ and $\Por\frestr L$ is $\theta_0$-cc. Assume, for any $A\in\Iwf(L)\menos\{\emptyset\}$:
   \begin{enumerate}[(i)]
      \item if $A$ has a maximum $x$ and $A_x:=A\cap L_x\in\hat{\Iwf}_x$, then $A_x\in\Iwf_x$;
      \item if $A$ has a maximum $x$, $A_x:=A\cap L_x\notin\hat{\Iwf}_x$ and $\dot{h}$ is a $\Por\frestr A$-name for a real, then there exists an increasing sequence $\langle B_n\rangle_{n<\omega}$ in $\Bwf_A:=\{B\subseteq A: B\cap L_x\in\Iwf_x\frestr A\}$ such that $\dot{h}$ is a $\Por\frestr C$-name for a real,
            where $C:=\bigcup_{n<\omega}B_n$, and $\Por\frestr C=\limdir_{n<\omega}\Por\frestr B_n$;
      \item if $A$ does not have a maximum and $\dot{h}$ is a $\Por\frestr A$-name for a real, then
            there exists an increasing sequence $\langle B_n\rangle_{n<\omega}$ in $\Bwf_A:=\{B\in\Iwf_x\frestr A: x\in A\}$ like in (ii);
      \item for all $x\in L$ and $B\in\Iwf_x$, $\Vdash_{\Por\upharpoonright B}\Qnm^B_x$ is $\theta_0$-$\sqsubset$-good.
   \end{enumerate}
   Then, $\Por\frestr L$ is $\theta_0$-$\sqsubset$-good.
\end{theorem}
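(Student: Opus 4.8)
The plan is to prove that $\Por\frestr L$ is $\theta_0$-$\sqsubset$-good by induction on $\mathrm{Dp}(A)$, showing simultaneously that $\Por\frestr A$ is $\theta_0$-$\sqsubset$-good for every $A\in\Iwf(L)\menos\{\emptyset\}$ (and, trivially, for $A=\emptyset$), and then concluding the statement for $L$ itself by a direct-limit/support argument. The organizing idea is that hypotheses (i)--(iii) force every name for a real in $\Por\frestr A$ to be captured by a ``short'' piece of the iteration of strictly smaller depth, on which we can apply the induction hypothesis, while hypothesis (iv) together with the preservation theorems for finite support iterations handles the one genuinely new piece at each step.

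First I would set up the three cases of Theorem~\ref{TempIt}(d) for $A\in\Iwf(L)$ with $\mathrm{Dp}(A)=\alpha$, assuming goodness has been established for all $B\subseteq L$ with $\mathrm{Dp}(B)<\alpha$. In the successor-type case, $A$ has a maximum $x$ and $A_x:=A\cap L_x\in\hat{\Iwf}_x$; by hypothesis (i) we get $A_x\in\Iwf_x$, so $\mathrm{Dp}(A_x)<\mathrm{Dp}(A)$ by Lemma~\ref{UpsilonTemp}(c), and by Theorem~\ref{TempIt}(d)(i) we have $\Por\frestr A=\Por\frestr A_x\ast\Qnm_x^{A_x}$. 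The induction hypothesis gives that $\Por\frestr A_x$ is $\theta_0$-$\sqsubset$-good, hypothesis (iv) gives that $\Qnm_x^{A_x}$ is forced to be $\theta_0$-$\sqsubset$-good, and since goodness is preserved by two-step iterations of $\theta_0$-cc posets (this is the standard iteration fact recalled right after Definition~\ref{DefPresProp}, applied with a two-step iteration viewed as a length-$2$ fsi), $\Por\frestr A$ is $\theta_0$-$\sqsubset$-good. In the remaining two cases, $A$ has a maximum $x$ with $A_x\notin\hat{\Iwf}_x$, or $A$ has no maximum: given a $\Por\frestr A$-name $\dot h$ for a real, hypothesis (ii) or (iii) supplies an increasing sequence $\langle B_n\rangle_{n<\omega}$ in $\Bwf_A$ with $\dot h$ a $\Por\frestr C$-name, $C=\bigcup_n B_n$, and $\Por\frestr C=\limdir_{n<\omega}\Por\frestr B_n$; here each $B_n$ has $\mathrm{Dp}(B_n)<\mathrm{Dp}(A)$ by Lemma~\ref{UpsilonTemp}(c) (for the no-maximum case, $B_n\in\Iwf_x\frestr A$; for the maximum case, $B_n\cap L_x\in\Iwf_x\frestr A$ with $B_n\subsetneq A$). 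Thus $\Por\frestr C$ is a finite support iteration (direct limit at $\omega$) of the $\theta_0$-$\sqsubset$-good posets $\Por\frestr B_n$ — more precisely a length-$\omega$ fsi with $\Por\frestr B_n\lessdot\Por\frestr B_{n+1}$ by Theorem~\ref{TempIt}(e) — so $\Por\frestr C$ is $\theta_0$-$\sqsubset$-good by the fsi preservation theorem. Since $\dot h$ is a $\Por\frestr C$-name and $\Por\frestr C\lessdot\Por\frestr A$ (Theorem~\ref{TempIt}(e)), the witnessing set $Y\subseteq\omega^\omega$ of size $<\theta_0$ obtained for $\dot h$ over $\Por\frestr C$ works over $\Por\frestr A$ as well (forcing $f\not\sqsubset\dot h$ for all $f$ $\sqsubset$-unbounded over $Y$). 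As $\dot h$ was arbitrary, $\Por\frestr A$ is $\theta_0$-$\sqsubset$-good, completing the induction.

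Finally, for $\Por\frestr L$ itself, since $L$ has no maximum and $[L]^{<\omega}\subseteq\Iwf(L)$, by Theorem~\ref{TempIt}(d)(iii) $\Por\frestr L$ is the direct limit of the $\Por\frestr B$ with $B\in\Iwf_x\frestr L$ for some $x\in L$, equivalently $\Por\frestr L=\bigcup\{\Por\frestr A : A\in\Iwf(L)\}$ (using that $\Por\frestr A\lessdot\Por\frestr L$ for all $A$ by Theorem~\ref{TempIt}(e), since every finite subset of $L$, hence every condition, lies in some $A\in\Iwf(L)$). Given a $\Por\frestr L$-name $\dot h$ for a real, I would argue as in case (iii) above: there is an increasing sequence $\langle B_n\rangle_{n<\omega}$ of members of $\Iwf(L)$ with $\dot h$ a $\Por\frestr C$-name, $C=\bigcup_n B_n$, and $\Por\frestr C=\limdir_n\Por\frestr B_n$; each $\Por\frestr B_n$ is $\theta_0$-$\sqsubset$-good by the induction just completed, so $\Por\frestr C$ is $\theta_0$-$\sqsubset$-good and the witness $Y$ for $\dot h$ transfers up to $\Por\frestr L$.

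The main obstacle I anticipate is not any single case but getting the bookkeeping of hypotheses (ii) and (iii) exactly right — in particular verifying that the sequence $\langle B_n\rangle$ they provide really does have the two needed features at once: that each $\Por\frestr B_n$ genuinely has smaller $\mathrm{Dp}$ (so the induction applies) and that $\Por\frestr C$ is literally a \emph{finite support} direct limit of the $\Por\frestr B_n$ (so that the classical fsi preservation theorem for $\theta_0$-$\sqsubset$-goodness — Definition~\ref{DefPresProp} and the remark following it — can be invoked verbatim, rather than some more delicate limit). A secondary technical point is that the fsi preservation theorem must be applied to an iteration whose iterands are the ``quotient'' posets $\Por\frestr B_{n+1}/\Por\frestr B_n$; one should note that a direct limit of a chain of $\theta_0$-$\sqsubset$-good $\theta_0$-cc posets is again $\theta_0$-$\sqsubset$-good, which follows from the standard fsi result since a real name in the limit is captured at some finite stage. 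Everything else is a routine transfer of witnesses along complete suborders.
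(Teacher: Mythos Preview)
Your proposal is correct and follows essentially the same strategy as the paper: prove by induction over $A\in\Iwf(L)$ that $\Por\frestr A$ is $\theta_0$-$\sqsubset$-good, splitting into the three cases of Theorem~\ref{TempIt}(d) and handling the limit cases via the countable direct-limit argument from the standard fsi preservation theorem. The only difference is that the paper inducts on $\rank_{\Iwf(L)}(A)$ rather than $\mathrm{Dp}(A)$; this keeps the induction visibly confined to $\Iwf(L)$, so you should note (which you do not) that the sets $B_n$ supplied by (ii) and (iii) actually lie in $\Iwf(L)$ --- they do, since $\Iwf(L)$ is closed under finite unions and intersections and $[L]^{<\omega}\subseteq\Iwf(L)$ --- before invoking the induction hypothesis on them.
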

\begin{proof}
   The proof is the same as \cite[Thm. 5.10]{mejia-temp}, but in this case, prove by induction on $\rank_{\Iwf(L)}(A)$ for $A\in\Iwf(L)$ that $\Por\frestr A$ is $\theta_0$-$\sqsubset$-good.
\end{proof}

Until the end of this section, fix $\gamma,\delta,\tau$ non-zero ordinals, $\delta$ and $\tau$ with uncountable cofinality, $L=L^{\delta,\gamma}$, $\Sigma=\langle S_\beta : \beta<\tau\rangle$ a partition of $\delta^*$, $\Iwf=\Iwf^{\Sigma,\gamma}$, $\bar{\Iwf}=\bar{\Iwf}^{\Sigma,\gamma}$ and $\Iwf_x=\Iwf_x^{\Sigma,\gamma}$. For $x\in L$ $\Sigma$-relevant, denote $J_x=J^{\Sigma,\gamma}_x$. Recall that any member of $\Iwf$ is written as a unique finite disjoint union of basic sets (see Definition \ref{DefShelahTemp}). For $a\in L$, denote by $[a]^-$ the set of sequences $x\in L$ such that $x$ end-extends $a\rest(|a|-1)$, $|x|\geq|a|$ and $x(|a|-1)<a(|a|-1)$. Denote by $[a]^+$ the set of sequences in $L$ that end-extend $a\rest(|a|-1)$ but are not in $[a]^-$ (that is, $x\in[a]^+$ iff either $x=a\rest(|a|-1)$, or $|x|\geq|a|$, $x$ end-extends $a\rest(|a|-1)$ and $x(|a|-1)\geq a(|a|-1)$).

\begin{theorem}\label{ThmItPres}
   Let $\Por\langle L,\bar{\Iwf}\rangle$ be a template iteration, and suppose $\Por\frestr L$ has the ccc. Assume that for all $x\in L$ and $B\in\Iwf_x$, $\Vdash_{\Por\upharpoonright B}\Qnm^B_x$ is $\theta_0$-$\sqsubset$-good.
   Then, $\Por\frestr L$ is $\theta_0$-$\sqsubset$-good.
\end{theorem}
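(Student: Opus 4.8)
The goal is to show that Theorem~\ref{PresTemp2} applies to \emph{any} ccc template iteration $\Por\frestr\langle L,\bar{\Iwf}\rangle$ along Shelah's template $\langle L^{\delta,\gamma},\bar{\Iwf}^{\Sigma,\gamma}\rangle$, under the sole assumption that each iterand $\Qnm^B_x$ (for $x\in L$, $B\in\Iwf_x$) is $\theta_0$-$\sqsubset$-good. So the plan is to verify hypotheses (i)--(iv) of Theorem~\ref{PresTemp2} for this particular template. Hypothesis (iv) is exactly our assumption, and ccc gives $\theta_0$-cc (since $\theta_0$ is uncountable regular and any ccc poset is $\theta_0$-cc). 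The set $L=L^{\delta,\gamma}$ has no maximum because $\gamma$ is a limit of the form $\dots$ — actually one must first observe that $L$ indeed has no maximum element, which holds since for any $x\in L$ the sequence $x^\smallfrown\langle 0\rangle$ (appending a positive ordinal) lies above $x$; and $[L]^{<\omega}\subseteq\Iwf(L)$ holds because every singleton is a basic set and $\Iwf$ is closed under finite unions. The substance is in (i), (ii), (iii).

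\emph{Hypothesis (i).} Suppose $A\in\Iwf(L)\menos\{\emptyset\}$ has maximum $x$ and $A_x=A\cap L_x\in\hat{\Iwf}_x$; we must show $A_x\in\Iwf_x$. Write $A$ as a finite disjoint union of basic sets; the one containing $x$ must be $\{x\}$ itself (a segment $L^{\delta,\gamma}_\alpha$ is downward-closed and has no max; a $J_y$-set containing its top point $y$ has $y=\max$, but $J_y$ also contains points $<y$, so if $x=y$ then $A_x=J_y\menos\{y\}$ together with finitely many basic sets below — one checks $J_y\menos\{y\}$ together with the rest is again in $\Iwf$; and if $x$ is the top of $J_y$ with $x\neq y$ then $x\neq\max(J_y)=y$, contradiction). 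In all cases the remaining basic sets sit inside $L_x$, so $A_x$ is a finite union of basic sets contained in $L_x$, hence $A_x\in\Iwf_x$. This is essentially a structural bookkeeping argument about the three types of basic sets and convexity.

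\emph{Hypotheses (ii) and (iii): the cofinal $\langle B_n\rangle$.} This is the main point. Given $A\in\Iwf(L)$ (either with maximum $x$ and $A_x\notin\hat{\Iwf}_x$, or with no maximum) and a $\Por\frestr A$-name $\dot h$ for a real, Lemma~\ref{CondSupp}(c) — applied with $\theta=\aleph_1$ is too weak, so instead we use it with $\theta$ some uncountable regular cardinal and extract a \emph{countable} subset — gives a countable $C'\subseteq A$ with $\dot h$ a $\Por\frestr C'$-name. The task is to thicken $C'$ to an increasing $\omega$-chain $\langle B_n\rangle$ in $\Bwf_A$ whose union $C$ supports $\dot h$ and satisfies $\Por\frestr C=\limdir_n\Por\frestr B_n$. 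For this one uses that $\Bwf_A$ (namely $\{B\subseteq A : B\cap L_x\in\Iwf_x\frestr A\}$ in case (ii), or $\{B\in\Iwf_x\frestr A : x\in A\}$ in case (iii)) is closed under finite unions and that $\Iwf(A)\frestr$(countable subsets) is ``small'': every countable subset of $A$ is contained in some member of $\Bwf_A$ that is itself countable — this uses the explicit description of $\Iwf^{\Sigma,\gamma}$ (segments $L^{\delta,\gamma}_\alpha$ with $\alpha$ of countable cofinality, or finite unions involving the $J_x$'s) together with the fact that we may absorb each successive countably-many names, building $B_n$ by recursion so that $B_{n+1}\supseteq B_n$, $B_{n+1}\in\Bwf_A$, and $B_{n+1}$ is countable and big enough to capture the $n$-th slice of the support; the direct-limit property $\Por\frestr C=\limdir_n\Por\frestr B_n$ then follows from Theorem~\ref{TempIt}(d)(ii)--(iii) since each condition in $\Por\frestr C$ has finite support and $C=\bigcup_n B_n$. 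The hard part will be establishing, in case (ii) (maximum $x$, $A_x\notin\hat{\Iwf}_x$), that a countable subset of $A$ extends to a countable member $B$ of $\Bwf_A$: this requires $A_x\cap L_z$ being forced into $\Iwf_z\frestr A$ for the relevant $z$, which is where the specific combinatorics of $L^{\delta,\gamma}$ — in particular the behaviour of the sets $J^{\Sigma,\gamma}_x$ and the role of $\delta,\tau$ having uncountable cofinality — is used.

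\emph{Conclusion.} Once (i)--(iv) are verified, Theorem~\ref{PresTemp2} yields that $\Por\frestr L$ is $\theta_0$-$\sqsubset$-good, which is the assertion of Theorem~\ref{ThmItPres}. I expect the direct-limit/cofinality argument in case (ii) to be the only genuinely delicate step; everything else is an unpacking of the definitions in Section~\ref{SecSelahTemp} and an appeal to results already established.
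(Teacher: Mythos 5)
Your overall strategy --- verify hypotheses (i)--(iv) of Theorem~\ref{PresTemp2} for Shelah's template --- is exactly the paper's approach, and your handling of hypothesis (i) and the preliminary remarks ($L$ has no maximum, $[L]^{<\omega}\subseteq\Iwf(L)$, ccc gives $\theta_0$-cc) is sound. But two things go wrong. First, condition (ii) of Theorem~\ref{PresTemp2} (maximum $x$ with $A_x\notin\hat{\Iwf}_x$), which you flag as the ``genuinely delicate'' case, is in fact \emph{vacuous} for Shelah's template: among basic sets only singletons have a maximum, so if $A\in\Iwf(L)$ has maximum $x$ then $A_x=A\menos\{x\}$ is still a finite union of basic sets, whence $A_x\in\Iwf_x\subseteq\hat{\Iwf}_x$ (this is Lemma~\ref{TempRealSupp}(a)). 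The hypothesis of condition (ii) is therefore never satisfied; the paper disposes of it in a parenthetical, and no cofinality argument is needed there.

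Second, and more seriously, your direct-limit step for condition (iii) is insufficient as stated. You build an increasing $\omega$-chain $\langle B_n\rangle$ in $\Bwf_A$ with union $C$ supporting $\dot h$ and claim $\Por\frestr C=\limdir_n\Por\frestr B_n$ ``follows since each condition has finite support and $C=\bigcup_n B_n$.'' It does not: finite support only controls $\dom(p)$. Membership of $p$ in $\Por\frestr C$ also requires, for $z=\max(\dom p)$, some $D\in\Iwf_z\frestr C$ with $p\frestr L_z\in\Por\frestr D$ and $p(z)$ a $\Por\frestr D$-name; to conclude $p\in\Por\frestr B_{n_0}$ you need that same $D$ to lie in $\Iwf_z\frestr B_{n_0}$. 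Writing $D=C\cap H$ with $H\in\Iwf_z$, what is actually required is the \emph{stabilization} property: for every $x\in A$ and every $H\in\Iwf_x\frestr A$ there exists $n_0$ with $H\cap C=H\cap B_{n_0}$ (this is property~(*) in the paper's proof of Lemma~\ref{TempRealSupp}). Merely having $C=\bigcup_n B_n$ gives that the traces $H\cap B_n$ increase to $H\cap C$, but there is no reason for this to stabilize at a finite stage. The paper obtains~(*) by constructing the $B_n$ from a \emph{disjointified} family of basic sets (Lemma~\ref{disjextLemma}), chosen so that no two of them lie inside a common basic proper subset of $A$; this is precisely where the uncountable cofinality of $\delta$ and $\tau$ enters. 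Your sketch correctly senses that the combinatorics of $\Iwf^{\Sigma,\gamma}$ must be invoked, but it puts the difficulty in the vacuous case and omits the stabilization mechanism that actually makes the direct limit work.
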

\begin{proof}
   By Lemma \ref{TempRealSupp} (see below) the conditions of Theorem \ref{PresTemp2} are satisfied (note that condition (ii) there is irrelevant).
\end{proof}

\begin{lemma}\label{disjextLemma}
   Let $a\in L$ with $|a|\geq 2$ and $a(|a|-1)=0$, $\Bwf$ a countable collection of basic sets contained in $[a]^-$ such that no initial segment of $L$ is in $\Bwf$.\footnote{This assumption is relevant only when $a=\la0,0\ra$ because $[a]^-=L_0$. Otherwise, $[a]^-$ does not contain basic sets which are initial segments.} Then, there is a countable collection $\Ewf$ of pairwise disjoint basic sets contained in $[a]^-$ such that
   \begin{enumerate}[(a)]
      \item any member of $\Bwf$ is contained in a (unique) member of $\Ewf$,
      \item any member of $\Ewf$ contains some member of $\Bwf$,
      \item $\Ewf$ does not contain initial segments of $L$ and
      \item no pair of members of $\Ewf$ are contained in any basic set included in $[a]^-$ that is not an initial segment of $L$.
   \end{enumerate}
   Furthermore, the same statement holds when $[a]^-$ is replaced by $[a]^+$.
\end{lemma}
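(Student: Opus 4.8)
The plan is to organise the non‑initial‑segment basic sets contained in $[a]^-$ into a forest under reverse inclusion and let $\Ewf$ pick out one nearly maximal basic set from each ``component'' that meets $\Bwf$. Throughout put $a':=a\restrict(|a|-1)$, and recall that any two basic sets are nested or disjoint, so the family $\Bwf^*$ of all basic sets $B$ with $\emptyset\neq B\subseteq[a]^-$ that are \emph{not} initial segments of $L$ is laminar, with $\Bwf\subseteq\Bwf^*$ by hypothesis. By the observation in the footnote to the statement, the only initial segment of $L$ that can lie inside $[a]^-$ is $L_0$, and only when $a=\la0,0\ra$ (when $[a]^-=L_0$); hence every member of $\Bwf^*$ is a singleton or some $J_x$ with $x$ $\Sigma$‑relevant. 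If $J_x\subseteq[a]^-$, then every $z\in J_x$ end‑extends $x':=x\restrict(|x|-1)$, and since $x'\in J_x\subseteq[a]^-$ we get $x'\supseteq a'$ with $x'(|a'|)<0$; thus $|x'|\geq|a'|+1$ and $z(|a'|)=x'(|a'|)<0$ for all $z\in J_x$, so $J$'s obtained from $x$ by enlarging only its last entry still sit inside $[a]^-$. Moreover, replacing $x(|x|-1)$ by any positive ordinal $<\tau$ leaves the sequence $\Sigma$‑relevant: one checks directly from Definition \ref{DefShelahTemp}(4) that the index set $r$ and the associated decreasing sequence of $\beta^\Sigma$‑values are unchanged. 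For $B\in\Bwf^*$ set $\Cwf_B:=\{C\in\Bwf^*:B\subseteq C\}$, which is a chain under inclusion by laminarity.

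The key step is the dichotomy: for each $B\in\Bwf^*$, \emph{either} $\Cwf_B=\{B\}$ and $B$ is a singleton, \emph{or} $\Cwf_B$ has a cofinal subfamily $\{J_{z'^\smallfrown\la\zeta\ra}:\zeta<\tau\text{ positive}\}$ for a fixed sequence $z'$, so that $\Cwf_B$ has no maximum and cofinality $\mathrm{cf}(\tau)>\omega$. Indeed, if $\Cwf_B$ contains any $J_x=J_{x'^\smallfrown\la\xi\ra}$, then by the previous paragraph $J_{x'^\smallfrown\la\zeta\ra}\in\Cwf_B$ for every positive $\zeta\in[\xi,\tau)$, so $\Cwf_B$ is infinite. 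Along the chain $\Cwf_B$ the left‑endpoint lengths are non‑increasing (the left endpoint of a larger $J_y$ is an initial segment of that of a smaller one), bounded below by $|a'|+1$, and constant within a fixed length (if $J_{y_1}\subseteq J_{y_2}$ with $|y_1'|=|y_2'|$ then $y_1'\in J_{y_2}$ forces $y_1'=y_2'$); hence they stabilise to some $z'$. Every member of $\Cwf_B$ is then contained in some $J_{z'^\smallfrown\la\zeta\ra}$, and by the previous paragraph all positive $\zeta<\tau$ occur, which gives the cofinal family.

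Now declare $B_1\approx B_2$ iff $B_1\cup B_2\subseteq C$ for some $C\in\Bwf^*$, equivalently $\Cwf_{B_1}\cap\Cwf_{B_2}\neq\emptyset$. By laminarity this is an equivalence relation on $\Bwf^*$ (transitivity: two witnesses $C_1,C_2$ both contain a common member, hence are nested, and the larger one witnesses the third pair), distinct classes consist of pairwise disjoint sets, and each class $\mathcal K$ is directed under inclusion with an upper bound of a pair again in $\mathcal K$. For each $\approx$‑class $\mathcal K$ meeting $\Bwf$, enumerate the (countably many) members of $\mathcal K\cap\Bwf$: if $\mathcal K=\{\{z\}\}$ put $E_{\mathcal K}:=\{z\}$; otherwise, by the dichotomy $\mathcal K$ has a cofinal family $\{J_{z'^\smallfrown\la\zeta\ra}:\zeta<\tau\text{ positive}\}$ and each member of $\mathcal K\cap\Bwf$ lies in some $J_{z'^\smallfrown\la\zeta_i\ra}$; since $\mathrm{cf}(\tau)>\omega$ we have $\zeta^*:=\sup_i\zeta_i<\tau$, and since $\mathrm{cf}(\delta)>\omega$ as well, $\zeta^*+1<\min\{\tau,\delta\}$, so we may put $E_{\mathcal K}:=J_{z'^\smallfrown\la\zeta^*+1\ra}$. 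In both cases $E_{\mathcal K}\in\mathcal K\subseteq\Bwf^*$ and $E_{\mathcal K}$ contains every member of $\mathcal K\cap\Bwf$. Let $\Ewf$ be the set of all these $E_{\mathcal K}$.

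It remains to verify (a)--(d). Condition (b) holds by construction. For (a): $B\in\Bwf$ lies in $E_{[B]_\approx}$, and if $B\subseteq E_{\mathcal K}$ then $B\approx E_{\mathcal K}$ (witnessed by $E_{\mathcal K}$), so $E_{\mathcal K}\in[B]_\approx$ and $\mathcal K=[B]_\approx$; this also gives pairwise disjointness, as two distinct members of $\Ewf$ that were nested would be $\approx$‑equivalent. Condition (c) holds because each $E_{\mathcal K}\in\Bwf^*$, hence is not an initial segment. Condition (d): if $E_{\mathcal K_1}\cup E_{\mathcal K_2}\subseteq C$ for some $C\in\Bwf^*$ then $E_{\mathcal K_1}\approx E_{\mathcal K_2}$, forcing $\mathcal K_1=\mathcal K_2$. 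The statement for $[a]^+$ is proved by the identical argument, with the simplifications that $[a]^+$ contains no initial segment of $L$ at all (its minimum is $a'$) and that the stabilised sequence $z'$ may equal $a'$; none of the steps above are affected. The main obstacle is the dichotomy of the second paragraph — one must check carefully that, once the left endpoint has stabilised, \emph{every} positive $\zeta<\tau$ yields a $\Sigma$‑relevant extension, so that the top of $\Cwf_B$ is genuinely a $\mathrm{cf}(\tau)$‑cofinal chain rather than a countable one (whose union would fail to be a basic set, breaking the construction); this is precisely where the hypotheses $\mathrm{cf}(\tau)>\omega$ and $\mathrm{cf}(\delta)>\omega$, together with the exact form of $\Sigma$‑relevance in Definition \ref{DefShelahTemp}(4), are used.
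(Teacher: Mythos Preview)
Your proof is correct and follows essentially the same strategy as the paper's: both identify, for each basic set $B$, the shortest ``root'' $z'$ such that $B$ is contained in some $J_{z'^\smallfrown\langle\zeta\rangle}\subseteq[a]^-$, group the members of $\Bwf$ by this root, and then take a supremum over the countably many relevant last entries using $\mathrm{cf}(\delta),\mathrm{cf}(\tau)>\omega$. The paper computes the root via an explicit minimal-$m$ formula (denoted $z_B$) and writes down $\eta_y$ directly, whereas you characterize the root as the stabilized left endpoint of the chain $\Cwf_B$ and package the grouping as the equivalence relation $\approx$; the resulting family $\Ewf$ is the same (your singleton classes correspond exactly to the paper's case $z_B=\emptyset$).
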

\begin{proof}
   For $B\in\Bwf$ let $x_B$ be the unique member of $[a]^-$ such that either $B=J_{x_B}$ or $B=\{x_B\}$ where, in the first case, $x_B$ is $\Sigma$-relevant. Define $z_B$ according to those two cases: in the first case, $z_B=x_B\frestr m$ where $m\geq|a|$ is minimal such that $x_B\frestr(m+1)$ is $\Sigma$-relevant; in the second case, let $z_B=x_B\frestr m$ where $m\geq|a|$ is minimal (if exists) such that either $m<|x_B|$ and ${x_B}^\smallfrown\langle\max\{0,x_B(m)\}\rangle$ is $\Sigma$-relevant, or $m=|x_B|$ and ${x_B}^\smallfrown\{0\}$ is $\Sigma$-relevant, otherwise, if there is no such $m$, put $z_B=\emptyset$.

   Let $H=\{z_B : B\in\Bwf\}\menos\{\emptyset\}$ which is a subset of $[a]^-$. For each $y\in H$, let $y'=y^\smallfrown\{\eta_y\}$ where $$\eta_y=\sup(\{0\}\cup\{x_B(|z_B|)+1: B\in H,\ z_B=y,\ |z_B|<|x_B|\textrm{\ and }x_B(|z_B|)\geq0\}).$$ As $\Bwf$ is countable and $\delta,\tau$ have uncountable cofinalities, then $\eta_y<\min\{\delta,\tau\}$ so $y'\in L$ (even in $[a]^-$ with length larger than $|a|$) and it is $\Sigma$-relevant. $\Ewf=\{J_{y'} : y\in H\}\cup\{B\in\Bwf : z_B=\emptyset\}$ is as desired.

   The same argument works for $[a]^+$.
\end{proof}

\newcommand{\cf}{\mathrm{cf}}

\begin{lemma}\label{TempRealSupp}
   For $A\in\Iwf\menos\{\emptyset\}$:
   \begin{enumerate}[(a)]
      \item If $x=\max(A)$ then $A\cap L_x\in\Iwf_x$.
      \item Let $\Por\langle L,\bar{\Iwf}\rangle$ be a template iteration, and suppose $\Por\frestr L$ has the ccc. If $A$ does \underline{not} have a maximum and $\dot{h}$ is a $\Por\frestr A$ name for a real, then there exists an increasing sequence $\langle B_n\rangle_{n<\omega}$ in $\Awf:=\{B\in\Iwf_x\frestr A : x\in A\}$ such that
            \begin{enumerate}[(i)]
              \item $\dot{h}$ is a $\Por\frestr C$-name, where $C:=\bigcup_{n<\omega}B_n$, and
              \item $\Por\frestr C$ is the direct limit of $\la\Por\frestr B_n\ra_{n<\omega}$.
            \end{enumerate}
   \end{enumerate}
\end{lemma}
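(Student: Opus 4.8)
The plan is to prove parts (a) and (b) together by induction on $\Dp(A)$, for $A\in\Iwf\setminus\{\emptyset\}$.

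Part (a) is structural and needs no induction. Write $A$ as its unique decomposition into pairwise disjoint basic sets $A=B_1\sqcup\dots\sqcup B_k$ (Definition~\ref{DefShelahTemp} and the remark following it). Of the three kinds of basic set only singletons have a maximum: from any element of an initial segment $L_\beta$, or of a non-singleton set $J_y$, one obtains a strictly larger element of the same set by appending $0$. Hence if $x=\max(A)$ then $x$ is the maximum of the block $B_i$ containing it, forcing $B_i=\{x\}$; consequently $A\cap L_x=A\setminus\{x\}=\bigcup_{j\ne i}B_j$ is again a finite union of basic sets, so $A\cap L_x\in\Iwf$, and being contained in $L_x$ it lies in $\Iwf_x$.

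For part (b), let $A$ have no maximum and $\dot h$ be a $\Por\frestr A$-name for a real. Since $\Por\frestr L$ is ccc, so is $\Por\frestr A$ (Theorem~\ref{TempIt}(e)), hence $\dot h$ is decided by countably many countable maximal antichains. As $A$ has no maximum, Theorem~\ref{TempIt}(d)(iii) presents $\Por\frestr A$ as the direct limit of the $\Por\frestr B$ with $B\in\Iwf_y\frestr A$, $y\in A$; running through the $\Dp$-well-founded construction of the conditions in those antichains and of the $\Por\frestr B$-names nested inside them, and using ccc at each stage, one extracts a countable family $\Bwf$ of basic sets with $\bigcup\Bwf\subseteq A$ such that $\dot h$ is (equivalent to) a $\Por\frestr(\bigcup\Bwf)$-name. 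The goal becomes to enclose $\bigcup\Bwf$ in a set $C=\bigcup_n B_n$ with $B_n\in\Awf$ increasing and $\Por\frestr C=\limdir_n\Por\frestr B_n$. By the basic-set decomposition, the right-most block $B^\ast$ of $A$ is an initial segment $L_\alpha$ (so $A=L_\alpha$), or a set $J_x$ (so $A=A_0\sqcup J_x$ with $A_0$ a finite union of basic sets below $\min J_x=x\frestr(|x|-1)\in A$, whence $A_0\in\Iwf_{x\frestr(|x|-1)}\frestr A\subseteq\Awf$); since $A_0\cup B'\in\Awf$ whenever $B'\in\Awf$, it is enough to approximate $B^\ast$ and adjoin the fixed set $A_0$ to each $B_n$.

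Now one distinguishes according to the cofinality of the ``top'' of $B^\ast$, which by the tree structure of $L$ is either $\omega$ or uncountable. When it is $\omega$, one reaches a cofinal $\omega$-sequence of basic sets (initial segments $L_{z_n}$, or sets $J_{z_n}$ for $\Sigma$-relevant $z_n$) covering the cofinal part of $\Bwf$; and when some part of $\Bwf$ accumulates, inside $B^\ast$, against a region of the form $[a]^-$ or $[a]^+$ with $a(|a|-1)=0$ sitting near the top of $B^\ast$, one applies the disjointification Lemma~\ref{disjextLemma} to that part. When the cofinality is uncountable, $\Bwf$ is automatically bounded away from the top, so $\bigcup\Bwf$ lies in a proper sub-block of $A$; this sub-block has strictly smaller $\Dp$ by Lemma~\ref{UpsilonTemp}(c) (e.g.\ $L_\beta$ with $\beta<\alpha$, or $A_0\cup J_{x'}$ with $x'<x$), and the induction hypothesis applies — except that the ``obvious'' proper sub-block need not itself lie in $\Iwf$, in which case one again covers $\Bwf$ by applying Lemma~\ref{disjextLemma} to the relevant $[a]^\pm$-region. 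Either way one ends up with a fixed finite union of basic sets $A_0\subseteq A$ and a countable family $\Ewf=\{E_m:m<\omega\}$ of pairwise disjoint basic sets inside $B^\ast$, none an initial segment, each containing a member of $\Bwf$, with the crucial property that no basic set other than an initial segment contains two distinct $E_m$. Put $C:=A_0\cup\bigcup_{m<\omega}E_m$ and $B_n:=A_0\cup\bigcup_{m\le n}E_m$: each $B_n$ is a finite union of basic sets contained in $A$ and bounded below an element of $A$, hence $B_n\in\Awf$; the sequence is increasing with union $C$; and $\dot h$ is a $\Por\frestr C$-name because $\bigcup\Bwf\subseteq C$ and complete suborders preserve maximal antichains (Theorem~\ref{TempIt}(e)).

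The remaining point, which I expect to be the main obstacle, is that $\Por\frestr C=\limdir_n\Por\frestr B_n$, i.e.\ every $p\in\Por\frestr C$ lies in some $\Por\frestr B_n$. One shows, by induction on $\Dp$, that the set over which $p$ — together with all the $\Por\frestr B$-names nested inside it — is genuinely built is contained in $A_0$ together with finitely many $E_m$: the independence property of $\Ewf$ forces every witnessing set $C\cap X$ of Theorem~\ref{TempIt}(2) to meet only the $E_m$ already met by the relevant finite domain (besides $A_0$), so that a finite-domain condition cannot spread over infinitely many $E_m$. The delicate sub-case is an initial segment $L_\beta\subseteq L_z$ cutting across the region near the top of $B^\ast$, which could a priori absorb infinitely many $E_m$; it is ruled out by the normalisation above, which keeps $C$ from reaching below $B^\ast$ except through the fixed finite set $A_0$, so that $C\cap L_\beta$ is once more a finite union of basic sets together with at most one partially met $E_m$. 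Granting this, $p\in\Por\frestr B_n$ for all large $n$, and the induction closes.
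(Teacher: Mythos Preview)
Your treatment of part~(a) is correct and matches the paper. For part~(b) your overall strategy---case analysis on the top basic block of $A$ and application of Lemma~\ref{disjextLemma}---is the same as the paper's, but the induction on $\Dp(A)$ is superfluous and your direct-limit argument is both overcomplicated and imprecisely formulated.

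The paper does not induct on $\Dp(A)$. It handles the six cases directly (three for $E_M=L_\xi$ with $\xi=0$, successor, limit; three parallel cases for $E_M=J_x$), constructing $\langle B_n\rangle$ explicitly in each. In the limit-of-uncountable-cofinality subcases there is no appeal to a smaller $\Dp$: one fixes a single proper basic set below the top (a constant $L_\alpha$ or $J_{x'}$) and still applies Lemma~\ref{disjextLemma} to the residual $[a]^-$ region. Your ``use the induction hypothesis, except when the obvious sub-block is not in $\Iwf$'' is therefore an unnecessary detour.

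More importantly, your direct-limit argument---tracing by induction on $\Dp$ through all names nested inside $p$, claiming the witnessing sets meet only the $E_m$ touched by the finite domain of $p$---is not the right mechanism. The paper instead isolates a purely combinatorial property
\[
(\ast)\quad \text{for every }x\in A\text{ and basic }H\in\Iwf_x\frestr A\text{ there is }n_0\text{ with }H\cap C=H\cap B_{n_0},
\]
which follows from clause~(d) of Lemma~\ref{disjextLemma}: any such $H$ that is not an initial segment can meet at most one member of $\Ewf$, and the initial-segment case is controlled by the fixed part ($L_\eta$, $L_{\alpha_n}$, $J_{x^n}$, etc.) built into $B_n$. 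Given $(\ast)$, the direct limit is a one-line argument at the top level: if $p\in\Por\frestr C$ with $x=\max(\dom p)$, the witnessing $D\in\Iwf_x\frestr C$ equals $C\cap H$ for some $H\in\Iwf_x$; applying $(\ast)$ to $A\cap H$ gives $D=B_{n_0}\cap H\in\Iwf_x\frestr B_{n_0}$, so the \emph{same} $D$ witnesses $p\in\Por\frestr B_{n_0}$. No recursion into nested names is needed, and the witnessing $D$ can certainly be far larger than $\dom p$---your ``only the $E_m$ already met by the finite domain'' is the wrong picture.
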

\begin{proof}
   Note that the only basic sets of $\Iwf$ that have a maximum are the singletons. Therefore, if $A\in\Iwf$ and $x=\max(A)$, it is clear that $A\menos\{x\}$ is still a union of basic sets of
   $\Iwf$, so (a) holds.

   We prove (b). If $\dot{h}$ is a $\Por\frestr B$-name for some $B\in\Awf$, then $B_n:=B$ works, so we assume that this is not the case. As $A\in\Iwf\menos\{\emptyset\}$, $A=\bigcup_{k\leq M}E_k$ for some $M<\omega$ and $\{E_k\}_{k\leq M}$ a sequence of basic sets of $\Iwf$ such that $E_k<E_{k+1}$ (that is, every member of $E_k$ is less than every member of $E_{k+1}$) for $k<M$. $E_M$ cannot be a singleton because $A$ does not have a maximum.

   Given $\dot{h}$ a $\Por\frestr A$-name for a real in $\omega^\omega$, by ccc-ness there is a set of conditions $\{p_n : n<\omega\}$ in $\Por\frestr A$ determining the name $\dot{h}$ (i.e. the union of the maximal antichains that decide $\dot{h}(i)$ for each $i<\omega$). Then, for each $n<\omega$, there exists a $C_n\in\Awf$ such that $p_n\in\Por\frestr C_n$, without loss of generality, $\bigcup_{k<M}E_k\subseteq C_n$. By cases on $E_M$ we construct a sequence $\la B_n\ra_{n<\omega}$ of sets in $\Awf$ such that
   \begin{itemize}
     \item[(*)] for any $x\in A$ and $H\in\Iwf_x\rest A$ there is an $n_0<\omega$ such that $H\cap\bigcup_{n<\omega}B_n=H\cap B_{n_0}$.
   \end{itemize}
   Note that it is enough to prove (*) when $H\subseteq A\cap L_x$ is a basic set.
   \begin{enumerate}[(1)]
      \item $E_M=L_\xi$ for some $\xi\leq\gamma$, which implies $M=0$. Consider the following cases.
         \begin{itemize}
            \item $\xi=0$. For $n<\omega$, let $\Cwf_n$ be the family of pairwise disjoint basic sets of the (unique) decomposition of $C_n$, which are clearly contained in $[\langle 0,0\rangle]^-$. Put $\Cwf=\bigcup_{n<\omega}\Cwf_n$ and find $\Ewf$ as in Lemma \ref{disjextLemma} applied to $\Cwf$. $\Ewf$ is infinite (if not, $\dot{h}$ is a $\Por\frestr B$-name for some $B\in\Awf$), so enumerate $\Ewf=\{H_k : k<\omega\}$ and put $B_n=\bigcup_{k\leq n}H_k$ for $n<\omega$.

                (*) holds because, if $x\in L_0$ and $H\subseteq L_x$ is basic, then $H\subseteq[\la0,0\ra]^-$ is not an initial segment and $H$ intersects at most one $H_k$ by Lemma \ref{disjextLemma} (recall that, if two basic sets have non-empty intersection, then one of them is contained in the other).
            \item $\xi=\eta+1$. We may assume that $L_\eta\subseteq C_n$ for all $n<\omega$. Then, the disjoint decomposition of $C_n$ into basic sets are $L_\eta$ and subsets of either $[\langle\eta,0\rangle]^+$ or $[\langle\eta+1,0\rangle]^-$. Let $\Cwf_n^0$ be the family of these basic sets contained in $[\langle\eta,0\rangle]^+$ and, similarly, let $\Cwf_n^1$ be the family corresponding to $[\langle\eta+1,0\rangle]^-$. Put $\Cwf^i=\bigcup_{n<\omega}\Cwf^i_n$ and let $\Ewf^i$ be as in Lemma \ref{disjextLemma} applied to $\Cwf^i$ for $i\in\{0,1\}$. Put $\Ewf=\Ewf^0\cup\Ewf^1$, which is infinite. Enumerate $\Ewf=\{H_k : k<\omega\}$ and put $B_n=L_\eta\cup\bigcup_{k\leq n}H_k$ for $n<\omega$.

                Now let $x\in L_{\eta+1}$ and $H\subseteq L_x$ be basic. If $H$ intersect $L_\eta$ then it must be contained in it so $n_0=0$ works for (*); if $H\cap L_\eta\cap\emptyset$ then either $H\subseteq[\langle\eta,0\rangle]^+$ or $H\subseteq[\langle\eta+1,0\rangle]^-$, but in any case $H$ intersects at most one $H_k$. Thus, (*) holds.
            \item $\xi$ is a limit ordinal. We may assume that, for $n<\omega$, the disjoint decomposition of $C_n$ into basic sets are $L_{\alpha_n}\in\Awf$, for some $\alpha_n<\xi$, and basic subsets of $[\langle\xi,0\rangle]^-$. Let $\Cwf_n$ be the family of the latter basic sets. Without loss of generality, if $\mathrm{cf}(\xi)=\omega$ then $\{\alpha_n\}_{n<\omega}$ is an increasing sequence of ordinals converging to $\xi$, otherwise, the sequence is constant $\alpha$. Put $\Cwf=\bigcup_{n<\omega}\Cwf_n$ and find $\Ewf$ by Lemma \ref{disjextLemma} applied to $\Cwf$. $\Ewf=\{H_k : k<\nu\}$ for some $\nu\leq\omega$ ($\nu=\omega$ when $\cf(\xi)>\omega$), so put $B_n=L_{\alpha_n}\cup\bigcup_{k<\min\{n+1,\nu\}}H_k$ for $n<\omega$.

                Let $x\in L_\xi$ and $H\subseteq L_x$ be basic. If $H$ intersects $[\xi]^-=\{x\in L:x(0)<\xi\}$ then $H$ is contained in it. If $\cf(\xi)=\omega$ then $H$ is contained in some $L_{\alpha_i}$ so $n_0$ can be found as in (*), else, $n_0=0$ works when $\cf(\xi)>\omega$; if $H\cap[\xi]^-=\emptyset$ then $H\subseteq[\langle\xi,0\rangle]^-$ so $H$ intersects at most one $H_k$ and $n_0$ as in (*) can be found.
         \end{itemize}
      \item $E_M=J_x$ for some $\Sigma$-relevant $x$. Let $m=|x|$. In each of the following cases (*) can be proven as before. We just show (*) for the last case.
          \begin{itemize}
            \item $x(m-1)=0$. For $n<\omega$,  let $\{E_k: k<M\}\cup\Cwf^0_n\cup\Cwf^1_n$ be the decomposition of $C_n$ into disjoint basic sets, where $\Cwf^0_n\subseteq[x]^-$ and $\Cwf^1_n\subseteq[x^\smallfrown\la0\ra]^-$. Put $\Cwf^i=\bigcup_{n<\omega}\Cwf^i_n$ and find $\Ewf^i$ as in Lemma \ref{disjextLemma} applied to $\Cwf^i$ for each $i\in\{0,1\}$. $\Ewf=\Ewf^0\cup\Ewf^1$ is infinite (if not, $\dot{h}$ is a $\Por\frestr B$-name for some $B\in\Awf$), so enumerate $\Ewf=\{H_k : k<\omega\}$ and put $B_n=\bigcup_{k<M}E_k\cup\bigcup_{k\leq n}H_k$ for $n<\omega$.
            \item $x(m-1)=\eta+1$. Let $x^0=x\frestr(m-1)^\smallfrown\{\eta\}$ and $x^1=x$. We may assume that $J_{x^0}\subseteq C_n$ for all $n<\omega$. Then, the disjoint decomposition of $C_n$ into basic sets are $E_k$, for $k<M$, $J_{x^0}$ and subsets of either $[{x^0}^\smallfrown\la0\ra]^+$ or $[{x^1}^\smallfrown\la0\ra]^-$. Let $\Cwf_n^0$ be the family of these basic sets contained in $[{x^0}^\smallfrown\la0\ra]^+$ and define $\Cwf^1$ likewise. Put $\Cwf^i=\bigcup_{n<\omega}\Cwf^i_n$ and let $\Ewf^i\subseteq[x^i]$ be as in Lemma \ref{disjextLemma} applied to $\Cwf^i$ for $i\in\{0,1\}$. Put $\Ewf=\Ewf^0\cup\Ewf^1$, which is infinite. Enumerate $\Ewf=\{H_k : k<\omega\}$ and put $B_n=\bigcup_{k<m}E_k\cup J_{x^0}\cup\bigcup_{k\leq n}H_k$ for $n<\omega$.
            \item $x(m-1)$ is a limit ordinal. We may assume that, for $n<\omega$, the disjoint decomposition of $C_n$ into basic sets are $E_k$, for $k<M$, $J_{x^n}$ where $x^n=x\upharpoonright(m-1)^\smallfrown\{\alpha_n\}$ for some $\alpha_n<x(m-1)$, and basic subsets of $[x^\smallfrown\la0\ra]^-$. Let $\Cwf_n$ be the family of the latter basic sets. Without loss of generality, if $\cf(x(m-1))=\omega$ then $\{\alpha_n\}_{n<\omega}$ is an increasing sequence with limit $x(m-1)$, otherwise, the sequence is constant $\alpha$ (so $\la x^n\ra_{n<\omega}$ is also constant). Put $\Cwf=\bigcup_{n<\omega}\Cwf_n$ and find $\Ewf$ by Lemma \ref{disjextLemma} applied to $\Cwf$. $\Ewf=\{H_k : k<\nu\}$ for some $\nu\leq\omega$ ($\nu=\omega$ when $\cf(x(m-1))>\omega$), so put $B_n=\bigcup_{k<M}E_k\cup J_{x^n}\cup\bigcup_{k<\min\{n+1,\nu\}}H_k$ for $n<\omega$.

                To see (*), let $y\in A$ and $H\subseteq A\cap L_y$ be basic. If $H$ intersects $[x^\smallfrown\la0\ra]^-$ then $H$ is contained in it and intersects at most one $H_k$, so $n_0$ as in (*) exists; if $H\cap[x^\smallfrown\la0\ra]^-=\emptyset$ then it is clear that $n_0=0$ works when $\cf(x(m-1))>\omega$, otherwise, $H$ is contained in $\bigcup_{k<M}E_k\cup J_{x^{n_0}}$ for some $n_0<\omega$.
          \end{itemize}
   \end{enumerate}
   It is clear that $\{B_n : n<\omega\}\subseteq\Awf$ is $\subseteq$-increasing and that $\dot{h}$ is a $\Por\frestr C$-name (by Lemma \ref{disjextLemma}(a)), where $C=\bigcup_{n<\omega}B_n\supseteq\bigcup_{n<\omega}C_n$, so it remains to prove that $\Por\frestr C=\limdir_{n<\omega}\Por\frestr B_n$. Let $p\in\Por\frestr C$ and $x=\max(\dom p)$, so there exists a $D\in\Iwf_x\frestr C$ such that $p\frestr L_x\in\Por\frestr D$ and
   $p(x)$ is a $\Por\frestr D$-name of a member of $\Qnm_x^D$. Then, $D=C\cap H$ for some $H\in\Iwf_x$. By (*) applied to $A\cap H$, there exists an $n_0<\omega$ such that $B_{n_0}\cap H=B_{n_0}\cap(A\cap H)=C\cap(A\cap H)=D$ and $x\in B_{n_0}$, so $D\in\Iwf_x\frestr B_{n_0}$ which implies $p\in\Por\frestr B_{n_0}$.
\end{proof}

We will need the following results.

\begin{theorem}[{\cite[Thm. 5.17]{mejia-temp}}]\label{dfrakbig}
   Let $\theta$ be an uncountable regular cardinal and $\Por\frestr\langle L,\bar{\Iwf}\rangle$ a standard template iteration (see Definition \ref{DefStandardIt}). Assume:
   \begin{enumerate}[(i)]
    \item If $\dot{x}$ is a $\Por\frestr L$-name for a real, then it is a $\Por\frestr A$-name for some $A\subseteq L$ of size $<\theta$.
    \item For every $x\in L_S$ and $B\in\hat{\Iwf}_x$, $\Por\frestr B$ forces that $\Qnm^B_x$ is $\sqsubset$-good.
    \item $W\subseteq L$ is a cofinal subset of size $\lambda\geq\theta$ such that, for all $z\in W$, $L_z\in\Iwf_z$ and there is a $\Por\frestr(L_z\cup\{z\})$-name $\dot{c}_z$ for a $\sqsubset$-unbounded real over $V^{\Por\upharpoonright L_z}$.
   \end{enumerate}
   Then, $\Por\frestr L$ forces $\dfrak_\sqsubset\geq\lambda$.
\end{theorem}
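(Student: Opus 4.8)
The plan is to prove, in $V^{\Por\frestr L}$, that no family of reals of size $<\lambda$ is $\sqsubset$-dominating; this is exactly $\dfrak_\sqsubset\geq\lambda$. So let $\dot D$ be a $\Por\frestr L$-name and $p\in\Por\frestr L$ force $|\dot D|<\lambda$. Since $\Por\frestr L$ is ccc (Lemma \ref{CondSupp}(a)), below a suitable strengthening of $p$ we may fix, in the ground model, an ordinal $\mu<\lambda$ and $\Por\frestr L$-names $\langle\dot g_\xi:\xi<\mu\rangle$ for reals with $p\Vdash\dot D\subseteq\{\dot g_\xi:\xi<\mu\}$. By hypothesis (i), each $\dot g_\xi$ is a $\Por\frestr A_\xi$-name for some $A_\xi\subseteq L$ of size $<\theta$; set $B:=\bigcup_{\xi<\mu}A_\xi$. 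As $\theta\leq\lambda$ and (as in the Main Theorem's hypotheses, where $\lambda^{<\lambda}=\lambda$) $\lambda$ is regular, $|B|<\lambda$.

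The heart of the argument is to \emph{catch $B$ below a single stage coming from $W$}. Using that $W$ is cofinal of size $\lambda$ together with the cofinality structure of the template (when $L$ is an initial interval of Shelah's template $L^{\delta,\gamma}$ with $\cf(\gamma)=\lambda$, the set $\{x(0):x\in B\}$ is bounded below some $\alpha<\gamma$, hence every member of $B$ lies below $\langle\alpha\rangle$ and therefore below some element of $W$), fix $z\in W$ with $B\subseteq L_z$. By Theorem \ref{TempIt}(e), $\Por\frestr A_\xi\lessdot\Por\frestr L_z\lessdot\Por\frestr L$, so each $g_\xi$ already lies in $V^{\Por\upharpoonright L_z}$, and thus $p$ forces $\dot D\subseteq V^{\Por\upharpoonright L_z}\cap\omega^\omega$. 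By hypothesis (iii), $L_z\in\Iwf_z$ and there is a $\Por\frestr(L_z\cup\{z\})$-name $\dot c_z$ for a real that is $\sqsubset$-unbounded over $V^{\Por\upharpoonright L_z}$; since $\Por\frestr(L_z\cup\{z\})\lessdot\Por\frestr L$ (Theorem \ref{TempIt}(e) again), $\dot c_z$ names a genuine real $c_z$ of $V^{\Por\upharpoonright L}$. Because $c_z$ and the fixed set $V^{\Por\upharpoonright L_z}\cap\omega^\omega$ both belong to $V^{\Por\upharpoonright L}$ and the relation $\sqsubset=\bigcup_n\sqsubset_n$ is $\Sigma^0_2$ (hence absolute), the statement ``$c_z$ is $\sqsubset$-unbounded over $V^{\Por\upharpoonright L_z}$'' persists to $V^{\Por\upharpoonright L}$; hypothesis (ii), through the preservation theorems (Theorem \ref{PresTemp2}, Theorem \ref{ThmItPres}), is the robust way to secure this when the catching above has to be done more coarsely. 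In particular $c_z\not\sqsubset g$ for every $g\in\dot D$, so $p$ does not force $\dot D$ to be $\sqsubset$-dominating; as $p$ was arbitrary, $\Por\frestr L\Vdash\dfrak_\sqsubset\geq\lambda$.

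For a general template $L$, where one cannot put $B$ below a single $z\in W$, I would run the last step by contradiction and a double pigeonhole: if $\dot D$ were dominating then for each $z\in W$ there is $\xi(z)<\mu$ with $c_z\sqsubset g_{\xi(z)}$, and $\sqsubset$-unboundedness of $c_z$ over $V^{\Por\upharpoonright L_z}$ forces $A_{\xi(z)}\not\subseteq L_z$, i.e.\ some $a_z\in A_{\xi(z)}$ satisfies $z\leq a_z$; since $|W|=\lambda$ while $\mu<\lambda$ and each $|A_\xi|<\theta\leq\lambda$, first fix $\xi^*$ with $\lambda$-many $z$ having $\xi(z)=\xi^*$ and then $a^*\in A_{\xi^*}$ with $\lambda$-many such $z$ satisfying $z\leq a^*$, contradicting that $W$ meets the proper segment $L_{a^*}\cup\{a^*\}$ in fewer than $\lambda$ points. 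The step I expect to be the real obstacle is precisely this catching/pigeonhole combinatorics: playing the small supports $A_\xi$ furnished by (i) against the cofinal set $W$ of (iii), while keeping control — via (ii) and the preservation theory of Section \ref{SecPresProp} — of the $\sqsubset$-unbounded reals $\dot c_z$ inside the full extension rather than only at their own initial stages.
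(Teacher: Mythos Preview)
The paper does not prove this theorem; it is quoted from \cite[Thm.~5.17]{mejia-temp} and used as a black box in Lemma~\ref{ApprLemma}. So there is no ``paper's own proof'' to compare against, and your proposal must stand on its own.

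Your first approach is not self-contained: you invoke $\lambda^{<\lambda}=\lambda$ and a cofinality property of $L$ (``$\{x(0):x\in B\}$ is bounded'') that are hypotheses of the Main Theorem, not of Theorem~\ref{dfrakbig}. For the theorem as stated you cannot assume either.

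Your second (pigeonhole) approach has the right shape but does not close. From $c_z\sqsubset g_{\xi(z)}$ and the absoluteness you correctly note, you only get $A_{\xi(z)}\not\subseteq L_z$, hence some $a_z\in A_{\xi(z)}$ with $a_z\geq z$. Your final line claims this contradicts ``$W$ meets $L_{a^*}\cup\{a^*\}$ in fewer than $\lambda$ points,'' but nothing in the hypotheses forces that: $W$ being cofinal of size $\lambda$ is perfectly compatible with $\lambda$-many members of $W$ lying below a single point (think of $L$ having small cofinality). So the contradiction does not materialize.

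What is missing is exactly the role of hypothesis (ii). The preservation result behind Theorem~\ref{dfrakbig} (this is \cite[Thm.~5.8]{mejia-temp}, the ``first preservation theorem'' discussed in the introduction) upgrades the conclusion of (iii) from ``$c_z$ is $\sqsubset$-unbounded over $V^{\Por\upharpoonright L_z}$'' to ``$c_z$ is $\sqsubset$-unbounded over $V^{\Por\upharpoonright(L\smallsetminus\{z\})}$'' in $V^{\Por\upharpoonright L}$. With this in hand your pigeonhole finishes cleanly: $c_z\sqsubset g_{\xi(z)}$ now forces $z\in A_{\xi(z)}$, so $W\subseteq\bigcup_{\xi<\mu}A_\xi$; but the right-hand side has size at most $\mu\cdot\theta<\lambda$ (using $\mu<\lambda$, each $|A_\xi|<\theta\leq\lambda$), contradicting $|W|=\lambda$. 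Your proposal gestures at (ii) and the preservation theorems but never extracts this stronger unboundedness, and without it the argument does not go through.
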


\begin{theorem}[New reals not added at other stages {\cite[Thm. 5.12]{mejia-temp}}]\label{newrealint}
   Let $\Por\frestr\langle L,\bar{\Iwf}\rangle$ be a standard template iteration (see Definition \ref{DefStandardIt}),
   $x\in L$ such that $\bar{L}_x:=L_x\cup\{x\}\in\hat{\Iwf}_z$ for all $z>x$ in $L$ and let $\dot{f}$ be a $\Por\frestr\bar{L}_x$-name of a real such that
   $\Vdash_{\Por\upharpoonright\bar{L}_x}\dot{f}\notin V^{\Por\upharpoonright L_x}$. Then, $\Por\frestr L$ forces that $\dot{f}\notin V^{\Por\upharpoonright(L\menos\{x\})}$.
\end{theorem}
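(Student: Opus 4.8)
The plan is to factor $\Por\frestr L$ as an amalgamation and then reduce to a statement about intermediate models. First I would set $P_0=\Por\frestr L_x$, $P_1=\Por\frestr(L\menos\{x\})$, $Q_0=\Por\frestr\bar{L}_x$ and $Q_1=\Por\frestr L$, and note that $(L\menos\{x\})\cap\bar{L}_x=L_x$, so that Theorem \ref{TempIt}(e),(f),(g) give $P_0\lessdot P_1\lessdot Q_1$, $P_0\lessdot Q_0\lessdot Q_1$, $P_1\cap Q_0=P_0$, and that $\langle P_0,P_1,Q_0,Q_1\rangle$ is a correct diagram (apply (g) with $A=L$, $A'=L\menos\{x\}$, $D=\bar{L}_x$). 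Fixing a $Q_1$-generic $G$ and writing $G_0=G\cap P_0$, $G_1=G\cap P_1$, $H=G\cap Q_0$ (so $G_0=H\cap P_0=G_1\cap P_0$), it then suffices to show that every real of $V[H]$ lying in $V[G_1]$ already lies in $V[G_0]$: applied to $\dot f^H$, together with the hypothesis $\Vdash_{\Por\frestr\bar{L}_x}\dot f\notin V^{\Por\frestr L_x}$ (whence $\dot f^H\notin V[G_0]$), this yields $\dot f^H\notin V[G_1]$, i.e.\ $\Por\frestr L$ forces $\dot f\notin V^{\Por\frestr(L\menos\{x\})}$.

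The main step is the proof of that reduction, and the key hypothesis is that $\bar{L}_x\in\hat{\Iwf}_z$ for every $z>x$. This is precisely what makes $\bar{L}_x$ behave as an initial segment of the index set from the viewpoint of each coordinate above $x$ (then $(\bar{L}_x\cup\{z\})\cap L_z=\bar{L}_x\in\Iwf_z\frestr(\bar{L}_x\cup\{z\})$), and, combined with correctness of $\langle P_0,P_1,Q_0,Q_1\rangle$, it ensures that no condition of $Q_1$ lies ``beyond'' the amalgamation of $P_1$ and $Q_0$ over $P_0$; hence $Q_1$ is forcing equivalent to $P_1\ast(Q_0/\dot{G}_0)$, where $\dot{G}_0$ is the canonical $P_1$-name for the $P_0$-generic (the standard ``amalgamation as iterated quotient'' fact for correct diagrams; cf.\ \cite{brendle05}). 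Reading the generic $G$ off this equivalence, $H$ becomes a $(Q_0/G_0)$-generic filter over the \emph{larger} model $V[G_1]$; concretely, if $D\subseteq Q_0/G_0$ is dense in $V[G_1]$, named by a $P_1$-name $\dot D$ forced dense by some $p_1\in G_1$, then correctness lets one amalgamate conditions of $P_1$ below $p_1$ with conditions of $Q_0$ and thereby show that $\{q\in Q_0/G_0:\exists p_1'\le p_1\ (p_1'\in G_1\text{ and }p_1'\Vdash\check q\in\dot D)\}$ is dense in $Q_0/G_0$, so $H$ meets $D$.

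Granting that $H$ is $(Q_0/G_0)$-generic over $V[G_1]$, the reduction is immediate: if $a\in2^\omega\cap V[H]\cap V[G_1]$, let $\dot a$ be the $(Q_0/G_0)$-name for $a$ coming from $V[G_0]$; since $a\in V[G_1]$ and $H$ is generic for $Q_0/G_0$ over $V[G_1]\supseteq V[G_0]$, some $r\in H$ forces, over $V[G_1]$, that $\dot a=\check a$. But for each $n$ the statement ``$r$ decides the $n$th value of $\dot a$'' refers only to $Q_0/G_0$, $r$ and $\dot a$, all of which lie in $V[G_0]$, so the forcing relation for such statements is absolute between $V[G_0]$ and $V[G_1]$; hence $r$ decides each value already over $V[G_0]$, and $a$, being the real defined by those decisions, belongs to $V[G_0]$. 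I expect the hard part to be the amalgamation/mutual-genericity step of the second paragraph: one must carefully combine correctness of the diagram with the hypothesis on $\bar{L}_x$ to handle conditions of $P_1$ whose support extends past $x$ — for those the ``restriction to $\bar{L}_x$'' is not literal and must trivialise any $x$-dependence of the names involved — whereas the final passage from models back to the name $\dot f$ is routine given the absoluteness remark above.
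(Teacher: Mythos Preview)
The paper does not give its own proof; the result is imported from \cite{mejia-temp}. Your overall reduction --- set up the correct diagram $\langle P_0,P_1,Q_0,Q_1\rangle$ via Theorem~\ref{TempIt}(g), then show that any real in $V[H]\cap V[G_1]$ already lies in $V[G_0]$ --- is a legitimate strategy, and the final absoluteness step is fine.

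The gap is in the middle. Your assertion that $Q_1$ is forcing equivalent to $P_1\ast(Q_0/\dot G_0)$ is false in general, even under the stated hypothesis. Take $L=\{0,1,2\}$ with $\Iwf_z=\Pwf(L_z)$, $x=1$, put $2\in L_C$ with $C_2=\{0,1\}$ and $\Qnm_2$ nontrivial. Then $\bar L_1=\{0,1\}\in\hat\Iwf_2$, but in $P_1=\Por\frestr\{0,2\}$ the iterand at $2$ is trivial (since $C_2\not\subseteq B$ for every $B\in\Iwf_2\frestr\{0,2\}$), so $P_1\simeq P_0$ and $P_1\ast(Q_0/\dot G_0)\simeq Q_0$, strictly smaller than $Q_1=Q_0\ast\Qnm_2$. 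What your argument actually needs is only the weaker claim that $H$ is $(Q_0/G_0)$-generic over $V[G_1]$, equivalently that $Q_0/G_0\lessdot Q_1/G_1$ in $V[G_1]$. This \emph{is} a known consequence of correctness (it is the quotient--embedding lemma used throughout the matrix and template iteration literature), but your ``concrete'' density argument does not prove it: the set $\{q\in Q_0/G_0:\exists p_1'\le p_1\ (p_1'\in G_1\wedge p_1'\Vdash\check q\in\dot D)\}$ is just $D$ itself, so you have assumed what you want to show. You need instead to prove the complete embedding directly --- e.g.\ by showing that for each $p_1\in G_1$ there is a $P_0$-reduction $p_0\in G_0$, and then using correctness to see that any $q_0\in Q_0/G_0$ below $p_0$ is compatible with $p_1$ in $Q_1$, and dually for reductions of $Q_1/G_1$-conditions.

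One further point: once the genericity step is repaired via the quotient--embedding lemma, the hypothesis $\bar L_x\in\hat\Iwf_z$ for all $z>x$ plays no role in your route, since Theorem~\ref{TempIt}(g) gives the correct diagram for arbitrary $A',D\subseteq L$. The argument in \cite{mejia-temp} proceeds differently, by induction on $\mathrm{Dp}(A)$ over $A\subseteq L$ with $x\in A$, showing $\Por\frestr A\Vdash\dot f\notin V^{\Por\frestr(A\menos\{x\})}$; there the hypothesis is genuinely needed in the direct-limit cases of Theorem~\ref{TempIt}(d)(ii),(iii), to guarantee that $\bar L_x$ (hence the name $\dot f$) is captured by some member of the directed system over which the limit is taken. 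So your approach, once fixed, is more direct and apparently does not need the hypothesis --- which is worth noting --- but as written the key step is not established.
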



\section{Proof of the Main Theorem}\label{SecMain}

\begin{mainthm}
   Let $\theta_0\leq\theta_1\leq\theta<\mu<\lambda$ be uncountable regular cardinals with $\theta^{<\theta}=\theta$ and $\lambda^{<\lambda}=\lambda$. Then, there is a ccc poset that forces $\add(\Nwf)=\theta_0$, $\cov(\Nwf)=\theta_1$, $\pfrak=\sfrak=\gfrak=\theta$, $\add(\Mwf)=\cof(\Mwf)=\mu$ and $\afrak=\non(\Nwf)=\rfrak=\cfrak=\lambda$.
\end{mainthm}

Fix, throughout this section, $\theta_0\leq\theta_1\leq\theta<\mu<\lambda$ regular uncountable cardinals, such that  $\theta^{<\theta}=\theta$ and $\lambda^{<\lambda}=\lambda$. We may assume\footnote{This is forced by a fsi of length $\lambda$ where, by a book-keeping argument, all subposets of $\Loc$ of size $<\theta_0$, all subalgebras of $\Bor$ of size $<\theta_1$ and all posets of the form $\Mor_F$ for a filter base $F$ of size $<\theta$ are used along the iteration.} that there are
\begin{enumerate}[(I)]
  \item a $\theta_0$-$\in^*$-unbounded family of size $\theta_0$,
  \item a $\theta_1$-$\pitchfork$-unbounded family of size $\theta_1$ and
  \item a $\theta$-$\propto$-unbounded family of size $\theta$.
\end{enumerate}

Fix $\Sigma=\la S_\beta : \beta <\theta\ra$ a sequence of pairwise disjoint sets, each of which is co-initial in $\lambda^*$ and such that $\lambda^*=\bigcup_{\beta <\theta}S_\beta$. For $\delta\leq\lambda$, let $\la L^\delta,\bar{\mathcal{I}}^\delta\ra$ be the template defined as follows. Put $L^\delta= L^{\delta,\lambda\cdot\mu}$ as in Definition \ref{DefShelahTemp}, where $\lambda\cdot\mu$ denotes the product as ordinals and let $\Sigma_\delta=\la S_\beta\cap\delta^* : \beta<\theta\ra$. Define $\Iwf^\delta=\Iwf^{\Sigma_\delta,\lambda\cdot\mu}$ (see Definition \ref{DefShelahTemp}).

Note that $x\in L^\delta$ is $\Sigma_\delta$-relevant iff it is $\Sigma$-relevant. For shortness we just call such sequences \emph{relevant}. For such relevant $x$, we denote $J^\delta_x=J^{\delta,\Sigma_\delta}_x$. The sequence of templates $\la (L^\delta,\bar{\mathcal{I}}^\delta)\ra_{\delta\leq\lambda}$ has the following property.

\begin{lemma}\label{restTemp}
If $\theta\leq\delta \leq\delta^\prime\leq\lambda$ then $\mathcal{I}^\delta=\mathcal{I}^{\delta^\prime}\restrict L^\delta$. So for $x\in L^\delta$
we have $\mathcal{I}^\delta_x=\mathcal{I}_x^{\delta^\prime}\restrict L^\delta$.
\end{lemma}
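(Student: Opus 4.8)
The plan is to unravel the definitions of $\Iwf^{\delta}=\Iwf^{\Sigma_\delta,\lambda\cdot\mu}$ and $\Iwf^{\delta'}=\Iwf^{\Sigma_{\delta'},\lambda\cdot\mu}$ from Definition \ref{DefShelahTemp} and compare their generating basic sets. First I would record the trivial containment $L^{\delta}\subseteq L^{\delta'}$ as linear orders, which is immediate from the defining condition $\delta^*<x(k)<\delta$ versus $\delta'^*<x(k)<\delta'$ together with $\delta\le\delta'$; in fact $L^{\delta}=\{x\in L^{\delta'}:\delta^*<x(k)<\delta\text{ for all }0<k<|x|\}$. Then the key point is that a sequence $x\in L^{\delta}$ is $\Sigma_\delta$-relevant if and only if it is $\Sigma_{\delta'}$-relevant: the three conditions (i), (ii), (iv) in the definition of $\Sigma$-relevance depend only on $x$ itself (length odd, parity of signs, $|x|-1\in r_x$), and condition (iii) — that $\{\beta^{\Sigma_\delta}(x(i-1))\}_{i\in r_x}$ be decreasing — agrees for $\Sigma_\delta$ and $\Sigma_{\delta'}$ because $\beta^{\Sigma_\delta}(\xi)=\beta^{\Sigma_{\delta'}}(\xi)$ whenever $\xi\in\delta^*$ (both equal the unique $\beta$ with $\xi\in S_\beta$, and $S_\beta\cap\delta^*\subseteq S_\beta\cap\delta'^*$). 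Here is where the hypothesis $\theta\le\delta$ matters: it guarantees that each $S_\beta\cap\delta^*$ is nonempty (indeed co-initial in $\delta^*$, since $S_\beta$ is co-initial in $\lambda^*$), so the indexing ordinal $\tau=\theta$ of the partition is unchanged when we pass from $\delta'$ to $\delta$ and no relevant sequence is lost through an empty piece $S_\beta\cap\delta^*$.

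Next I would compare the three types of basic sets. For the initial segments, $L^{\delta,\lambda\cdot\mu}_\alpha=L^{\delta',\lambda\cdot\mu}_\alpha\cap L^{\delta}$ for every $\alpha\le\lambda\cdot\mu$, since the length $\lambda\cdot\mu$ is the same for both templates and restricting the universe to $L^{\delta}$ commutes with taking the segment below $\alpha$. For singletons, $\{z\}$ with $z\in L^{\delta}$ is simultaneously a basic set of $\Iwf^{\delta}$ and of $\Iwf^{\delta'}$, and $\{z\}\cap L^{\delta}=\{z\}$. For the sets $J^{\Sigma_\delta,\lambda\cdot\mu}_x$ with $x$ relevant, using the equivalence of relevance above one checks $J^{\delta}_x=J^{\delta'}_x\cap L^{\delta}$ directly from the definition $J^{\Sigma,\gamma}_x=\{z: x\restrict(|x|-1)\le z<x\}$, again because the order on $\mathbf{SO}$ restricts correctly. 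Taking finite unions, every member of $\Iwf^{\delta'}$ meets $L^{\delta}$ in a member of $\Iwf^{\delta}$ and, conversely, every member of $\Iwf^{\delta}$ is already a finite union of basic sets of $\Iwf^{\delta'}$ contained in $L^{\delta}$, hence lies in $\Iwf^{\delta'}$; this gives $\Iwf^{\delta}=\Iwf^{\delta'}\restrict L^{\delta}$. The ``So'' clause follows by intersecting with $L^{\delta}_x$ for $x\in L^{\delta}$, using $(L^{\delta})_x=(L^{\delta'})_x\cap L^{\delta}$.

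I expect no serious obstacle here; the statement is essentially bookkeeping about how the generating basic sets behave under restricting the ordinal width. The one genuinely load-bearing observation is the invariance of $\Sigma$-relevance under $\delta\mapsto\delta'$, and the only place the hypothesis is used is to ensure the pieces $S_\beta\cap\delta^*$ stay nonempty (co-initial), so that the function $\beta^{\Sigma_\delta}$ is total on $\delta^*$ and condition (iii) in the relevance definition is literally the same statement for both templates. Everything else is a direct comparison of convex sets in $\mathbf{SO}$, and one should also note in passing that the uniqueness of the disjoint decomposition into basic sets (remarked after Definition \ref{DefShelahTemp}) makes the two descriptions of a given element of $\Iwf^{\delta}$ manifestly coherent.
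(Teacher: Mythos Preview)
Your approach matches the paper's: compare the three families of basic generators and take finite unions. There is, however, one omission, and it happens to be exactly where the hypothesis $\theta\le\delta$ is actually used. To prove $\Iwf^{\delta'}\restrict L^\delta\subseteq\Iwf^\delta$ you must intersect \emph{every} basic set of $\Iwf^{\delta'}$ with $L^\delta$, and this includes the sets $J^{\delta'}_x$ for $\Sigma_{\delta'}$-relevant $x\in L^{\delta'}\setminus L^\delta$; you only treated $x\in L^\delta$. The paper's proof records this case explicitly: $J^{\delta'}_x\cap L^\delta=\emptyset$ for such $x$. The reason is that every $z\in J^{\delta'}_x$ either equals or properly end-extends $x':=x\restrict(|x|-1)$, so if $x'\notin L^\delta$ then $z\notin L^\delta$; and the remaining possibility $x'\in L^\delta$, $x\notin L^\delta$ cannot occur, because condition (iv) of relevance gives $x(|x|-1)<\tau=\theta\le\delta$, so the last coordinate is always in range. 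That inequality is the genuine use of $\theta\le\delta$.

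By contrast, your stated use of the hypothesis is not needed: $\beta^{\Sigma_\delta}$ is total on $\delta^*$ regardless (since $\bigcup_\beta S_\beta=\lambda^*\supseteq\delta^*$), and the paper keeps $\tau=\theta$ for every $\Sigma_\delta$ by definition, so relevance for $x\in L^\delta$ agrees across all widths without any lower bound on $\delta$. One minor wording issue: your sentence ``every member of $\Iwf^\delta$ is already a finite union of basic sets of $\Iwf^{\delta'}$ contained in $L^\delta$, hence lies in $\Iwf^{\delta'}$'' is not literally true (for instance $L^\delta_\alpha\notin\Iwf^{\delta'}$); what you need, and what your earlier computations give, is that each basic set of $\Iwf^\delta$ is the \emph{trace on $L^\delta$} of a basic set of $\Iwf^{\delta'}$, which yields $\Iwf^\delta\subseteq\Iwf^{\delta'}\restrict L^\delta$ directly.
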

\begin{proof} Observe that $L^\delta_\alpha=L^{\delta^\prime}_\alpha\cap L^\delta$ where $\alpha\in\lambda\mu$ and $J^{\delta^\prime}_x\cap L^\delta$ is either equal to $J^\delta_x$ when $x\in L^\delta$, or is the empty set when $x\notin L^\delta$.
\end{proof}

\begin{definition}\label{DefAppr}
An iteration $\Por\la L,\bar{\Iwf}\ra$ is called \emph{pre-appropriate} if it is a $(\la\Dor\ra,\theta)$-standard iteration where:
\begin{enumerate}[(1)]
  \item $\la L,\bar{\Iwf}\ra=\la L^\delta,\bar{\Iwf}^\delta\ra$ for some $0<\delta\leq\lambda$.
  \item $L=L_H\cup L_A\cup L_R\cup L_F$ is a disjoint union, $L_S=L_H$ and $L_C=L\menos L_H$.
  \item $L_H\cap\lambda\cdot\mu$ is cofinal in $\lambda\cdot\mu$ and has size $\lambda$.
  {\item If $x\in L_H$ then for $B\in \hat{\Iwf}_x$, $\dot{\mathbb{Q}}^B_x=\dot{\mathbb{D}}^{V^{\Por\restrict B}}$.}
  {\item For every $x\in L_F$ there are fixed $C_x\in\hat{\Iwf}_x$ of size $<\theta$ and a $\Por\restrict C_x$-name $\dot{F}_x$ for a filter base of size $<\theta$. $\Qnm_x=\Mor_{\dot{F}_x}$, that is, for $B\in \hat{\mathcal{I}}_x$,
  \[
    \dot{\mathbb{Q}}^B_x=
    \begin{cases}
       \mathbb{M}_{\dot{F}_x}, &\text{in case }C_x\subseteq B, \\
       \text{trivial poset}, &\text{in case }C_x\not\subseteq B.
    \end{cases}
  \]}
  {\item For $x\in L_R$, there are fixed $C_x\in\hat{\Iwf}_x$ of size $<\theta$ and $\Qnm_x$ is a $\Por\frestr C_x$-name of a subalgebra of $\Bor^{V^{\Por\upharpoonright C_x}}$ of size $<\theta_1$.}
  {\item For $x\in L_A$, there are fixed $C_x\in\hat{\Iwf}_x$ of size $<\theta$ and $\Qnm_x$ is a $\Por\frestr C_x$-name of a $\sigma$-linked subposet of $\Loc^{V^{\Por\upharpoonright C_x}}$ of size $<\theta_0$.}
\end{enumerate}
We call such an iteration \emph{appropriate} if it satisfies, additionally:
\begin{enumerate}[(1)]
\setcounter{enumi}{7}
\item If $\dot{F}$ is a $\Por\rest L$-name for a filter base of size $<\theta$, then there is $x\in L_F$ such that $\Vdash_{\Por\rest L} \dot{F}=\dot{F}_x$.
\item If $\Qnm$ is a $\Por\rest L$-name of a subalgebra of $\Bor$ of size $<\theta_1$, then there is an $x\in L_R$ such that $\Vdash_{\Por\rest L}\Qnm=\Qnm_x$.
\item If $\Qnm$ is a $\Por\rest L$-name of a $\sigma$-linked subposet of $\Loc$ of size $<\theta_0$, then there is an $x\in L_A$ such that $\Vdash_{\Por\rest L}\Qnm=\Qnm_x$.
\end{enumerate}
\end{definition}

\begin{lemma}\label{general_properties}
Let $\Por\rest\la L,\bar{\Iwf}\ra$ be a pre-appropriate iteration. If $A\subseteq L$, then
\begin{itemize}
\item $\mathbb{P}\restrict A$ has the Knaster property,
\item if $p\in\Por\restrict A$ then there is $C\subseteq A$ of cardinality (strictly) smaller than $\theta$ such that $p\in\Por\restrict C$ and
\item if $\dot{h}$ is a $\Por\restrict A$-name for a real then there is $C\subseteq A$ of cardinality (strictly) smaller than $\theta$ such that
$\dot{h}$ is a $\Por\restrict C$-name for a real.
\end{itemize}
\end{lemma}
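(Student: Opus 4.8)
The plan is to derive all three assertions directly from the general theory of $\theta$-standard template iterations, since a pre-appropriate iteration is merely a particular instance of such. First I would note that, by Definition \ref{DefAppr}, $\Por\rest\la L,\bar{\Iwf}\ra$ is a $(\la\Dor\ra,\theta)$-standard iteration (Definition \ref{DefSigmaIt}), hence in particular a $\theta$-standard template iteration in the sense of Definition \ref{DefStandardIt}; and that $\theta$ has uncountable cofinality, being one of our fixed regular uncountable cardinals. With these two observations in hand, Lemma \ref{CondSupp}, applied to the template $\la L,\bar{\Iwf}\ra=\la L^\delta,\bar{\Iwf}^\delta\ra$ and the set $A$, yields verbatim the Knaster property of $\Por\rest A$, the existence of a $C\subseteq A$ of size $<\theta$ carrying a given condition, and the existence of a $C'\subseteq A$ of size $<\theta$ over which a given name for a real is defined.

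The only step that requires a short check is that the data fixed in clauses (2)--(7) of Definition \ref{DefAppr} really conform to Definition \ref{DefStandardIt}, and thereby to the $\sigma$-linkedness hypotheses behind Lemma \ref{templateitccc}: for $x\in L_H=L_S$ the iterand $\dot{\mathbb{Q}}^B_x=\dot{\mathbb{D}}^{V^{\Por\rest B}}$ is Hechler forcing, a Suslin $\sigma$-centered correctness-preserving poset coded in the ground model; for $x\in L_F$ the iterand $\Mor_{\dot{F}_x}$ is $\sigma$-centered and its conditions are (codes for) reals, with $|C_x|<\theta$; for $x\in L_R$ the iterand is a subalgebra of $\Bor$ of size $<\theta_1\le\theta$, $\sigma$-linked with real conditions and $|C_x|<\theta$; and for $x\in L_A$ the iterand is a $\sigma$-linked subposet of $\Loc$ of size $<\theta_0\le\theta$ with $|C_x|<\theta$. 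In each case the convention that $\dot{\mathbb{Q}}^B_x$ is (a name for) the trivial poset when $C_x\not\subseteq B$ is exactly clause (v) of Definition \ref{DefStandardIt}, and the $\sigma$-linked witnesses restrict correctly along the complete suborders $\Por\rest D\lessdot\Por\rest B$. So the iteration is genuinely $\theta$-standard and Lemma \ref{CondSupp} applies.

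I do not expect any real obstacle here: the lemma is pure bookkeeping, recording for later use in the proof of the Main Theorem the instances of Lemma \ref{CondSupp} that apply to the concrete iterations of Definition \ref{DefAppr}. The only point one must be slightly careful about is keeping the roles of $\theta_0,\theta_1,\theta$ straight, so that the ``size $<\theta_0$'' and ``size $<\theta_1$'' iterands are indeed of size $<\theta$, and hence fall under the hypotheses of the cited lemmas.
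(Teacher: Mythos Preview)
Your proposal is correct and follows exactly the paper's approach: the paper's proof is the single line ``Directly from Lemma \ref{CondSupp},'' and your argument is precisely the unpacking of why that lemma applies to a pre-appropriate iteration. The extra verification you give that clauses (2)--(7) of Definition \ref{DefAppr} fit Definition \ref{DefStandardIt} is accurate and harmless, though the paper treats this as implicit in having declared the iteration $(\la\Dor\ra,\theta)$-standard.
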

\begin{proof}
   Directly from Lemma \ref{CondSupp}.
\end{proof}

\begin{lemma}\label{ApprLemma}
   If $\la L,\bar{\Iwf}\ra=\la L^\delta,\bar{\Iwf}^\delta\ra$ for some $\delta\leq\lambda$ of uncountable cofinality, then any pre-appropriate iteration $\Por\rest\la L,\bar{\Iwf}\ra$ forces $\add(\Nwf)\leq\theta_0$, $\cov(\Nwf)\leq\theta_1$, $\sfrak\leq\theta$, $\gfrak\leq\theta$, $\add(\Mwf)=\cof(\Mwf)=\mu$ and $\non(\Nwf)=\rfrak=\cfrak=\lambda$. If it is appropriate, equalities are forced for the first four cardinals and $\pfrak=\theta$.
\end{lemma}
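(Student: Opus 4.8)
The plan is to verify the conclusions one value at a time, the pre\nobreakdash-appropriate part first and the extra equalities afterwards. Throughout, recall from Lemma~\ref{general_properties} that $\Por\rest L$ is Knaster and every $\Por\rest L$-name for a real is a $\Por\rest C$-name for some $C\subseteq L$ of size $<\theta$; recall also that $L^\delta=L^{\delta,\lambda\cdot\mu}$ has no maximum and every initial segment $L^\delta_\alpha$ ($\alpha\le\lambda\cdot\mu$) lies in $\Iwf^\delta$, so that $\Por\rest L=\limdir_{\alpha<\lambda\cdot\mu}\Por\rest L^\delta_\alpha$ with $\cf(\lambda\cdot\mu)=\mu$. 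All computations take place over the ground model of the iteration, in which (I)--(III) hold; the hypothesis $\cf(\delta)>\omega$ is what lets us invoke Theorem~\ref{ThmItPres}.

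\emph{Upper bounds and the value $\lambda$.} First I would check that $\Por\rest L$ is $\theta_0$-$\in^*$-good, $\theta_1$-$\pitchfork$-good and $\theta$-$\propto$-good. By Theorem~\ref{ThmItPres} it suffices that each iterand $\Qnm^B_x$ ($x\in L$, $B\in\Iwf^\delta_x$) has the relevant goodness; the iterands are $\dot\Dor$ (on $L_H$), $\Mor_{\dot F_x}$ with $\dot F_x$ of size $<\theta$ (on $L_F$), subalgebras of $\Bor$ of size $<\theta_1$ (on $L_R$), and $\sigma$-linked subposets of $\Loc$ of size $<\theta_0$ (on $L_A$). Now $\dot\Dor$ is $\sigma$-centered and $\propto$-good (Example~\ref{SubsecUnbd}(1), Baumgartner--Dordal), hence also $\theta_0$-$\in^*$-good and $\theta_1$-$\pitchfork$-good by Example~\ref{SubsecUnbd}(2)(3); $\Mor_{\dot F_x}$ is $\sigma$-centered of size $<\theta$; subalgebras of $\Bor$ are $\in^*$-good (Example~\ref{SubsecUnbd}(3), Kamburelis) and of size $<\theta_1\le\theta$; and every poset of size $<\kappa$ is $\kappa$-$\sqsubset$-good by Lemma~\ref{smallPlus}. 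Hence all three goodness properties hold, so the ground-model $\theta_0$-$\in^*$-, $\theta_1$-$\pitchfork$- and $\theta$-$\propto$-unbounded families from (I)--(III) remain such, giving $\add(\Nwf)=\bfrak_{\in^*}\le\theta_0$, $\cov(\Nwf)\le\bfrak_\pitchfork\le\theta_1$ and $\sfrak=\bfrak_\propto\le\theta$. For the value $\lambda$, set $W=L_H\cap\lambda\cdot\mu$: it is cofinal in $L^\delta$, has size $\lambda$, and for $z\in W$ we have $L^\delta_z\in\Iwf^\delta_z$ while the Cohen real added by $\Qnm^{L^\delta_z}_z=\dot\Dor^{V^{\Por\rest L^\delta_z}}$ is both $\propto$- and $\pitchfork$-unbounded over $V^{\Por\rest L^\delta_z}$; since $\dot\Dor$ is $\propto$- and $\pitchfork$-good, Theorem~\ref{dfrakbig} applied with $\sqsubset=\propto$ and then with $\sqsubset=\pitchfork$ gives $\rfrak=\dfrak_\propto\ge\lambda$ and $\non(\Nwf)\ge\dfrak_\pitchfork\ge\lambda$. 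As $|L^\delta|\le\lambda$ and $\lambda^{<\lambda}=\lambda$, also $|\Por\rest L|\le\lambda$, so the extension has $\cfrak\le\lambda$; combined with the previous bounds, $\rfrak=\non(\Nwf)=\cfrak=\lambda$.

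\emph{The value $\mu$.} Since $\add(\Mwf)=\min\{\bfrak,\cov(\Mwf)\}\le\cof(\Mwf)=\max\{\dfrak,\non(\Mwf)\}$, it is enough to force $\bfrak,\cov(\Mwf)\ge\mu$ and $\dfrak,\non(\Mwf)\le\mu$. Choose $\langle\alpha_i\rangle_{i<\mu}$ increasing and cofinal in $\lambda\cdot\mu$ and, using cofinality of $L_H\cap\lambda\cdot\mu$, pick $z_i\in L_H$ with $\alpha_i<z_i<\alpha_{i+1}$; let $\dot d_i$ and $\dot c_i$ be the Hechler real and its associated Cohen real added by $\Qnm^{L^\delta_{z_i}}_{z_i}$, so that (as $L^\delta_{\alpha_i}\subseteq L^\delta_{z_i}$) $\dot d_i$ dominates every real of $V^{\Por\rest L^\delta_{\alpha_i}}$ and $\dot c_i$ is Cohen over $V^{\Por\rest L^\delta_{\alpha_i}}$. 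Since every real and every meager Borel code has support of size $<\theta<\mu=\cf(\lambda\cdot\mu)$, it lies in some $V^{\Por\rest L^\delta_{\alpha_i}}$; hence $\{\dot d_i:i<\mu\}$ is dominating and $\{\dot c_i:i<\mu\}$ is non-meager, while any family of $<\mu$ reals (resp.\ meager sets) sits inside a single $V^{\Por\rest L^\delta_{\alpha_i}}$, hence is dominated by $\dot d_i$ (resp.\ avoided by $\dot c_i$). This yields $\dfrak,\non(\Mwf)\le\mu$ and $\bfrak,\cov(\Mwf)\ge\mu$, so $\add(\Mwf)=\cof(\Mwf)=\mu$. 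Finally $\gfrak\le\theta$ follows from Lemma~\ref{lemmagfrak}: using the partition $\Sigma_\delta=\langle S_\beta\cap\delta^*:\beta<\theta\rangle$, put $W_\beta=V^{\Por\rest L[\beta]}$, where $L[\beta]=\{x\in L^\delta:\beta^\Sigma(x(k))<\beta$ for every negative $x(k)\}$; then $\langle L[\beta]\rangle_{\beta\le\theta}$ is increasing with union $L^\delta$, clause (iii) of Lemma~\ref{lemmagfrak} holds because a support of size $<\theta$ meets only $<\theta$-boundedly many pieces $S_\beta$, clause (ii) is clear, and clause (i) is obtained by adding, at a suitable $x'\in L[\beta+1]\setminus L[\beta]$ with $\bar{L}_{x'}\in\hat{\Iwf}^\delta_z$ for all $z>x'$, a real which by Theorem~\ref{newrealint} is not recreated inside $W_\beta$; this is the argument of \cite[\S5]{mejia-temp}.

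\emph{The appropriate case.} For the remaining three lower bounds we use Definition~\ref{DefAppr}(8)--(10) to find, for each potential small witness, a position that destroys it. Given a $\Por\rest L$-name $\dot F$ for a filter base of size $<\theta$, clause (8) gives $x\in L_F$ with $\Vdash\dot F=\dot F_x$, and $\Qnm_x=\Mor_{\dot F_x}$ adds a pseudo-intersection of $\dot F$; hence $\pfrak\ge\theta$, and as $\pfrak\le\sfrak\le\theta$ and $\pfrak\le\gfrak\le\theta$ we get $\pfrak=\sfrak=\gfrak=\theta$. Given a set $F$ of $<\theta_0$ reals, the subposet $\Qnm=\{(s,G)\in\Loc:G\in[F]^{\le|s|}\}$ of $\Loc$ is $\sigma$-linked of size $<\theta_0$, so by clause (10) it equals $\Qnm_x$ for some $x\in L_A$, whose generic slalom localizes every member of $F$; thus $\add(\Nwf)=\bfrak_{\in^*}\ge\theta_0$, i.e.\ $\add(\Nwf)=\theta_0$. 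Given $<\theta_1$ null sets $N_\alpha=\bigcap_n U^\alpha_n$ with $U^\alpha_n$ open of measure tending to $0$, let $\Qnm$ be the subalgebra of $\Bor$ generated by the sets $[U^\alpha_n]$ together with the basic clopen sets; it has size $<\theta_1$, so by clause (9) $\Qnm=\Qnm_x$ for some $x\in L_R$, and since for every nonzero $b\in\Qnm$ and each $\alpha$ some $[U^\alpha_n]$ has measure $<\mu(b)$, a density argument shows the generic real of $\Qnm_x$ avoids all $N_\alpha$; hence $\cov(\Nwf)\ge\theta_1$, i.e.\ $\cov(\Nwf)=\theta_1$. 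I expect the main obstacle to be purely infrastructural: that reals (and the auxiliary posets above) really have support of size $<\theta$, that Theorem~\ref{ThmItPres} is available for Shelah's template, and that the book-keeping of Definition~\ref{DefAppr}(8)--(10) is consistent; the most delicate single point is the $\theta$-stratification behind $\gfrak\le\theta$, where one must select positions to which Theorem~\ref{newrealint} applies, and which is taken over from \cite{mejia-temp}.
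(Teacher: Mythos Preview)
Your proof is essentially correct and follows the same strategy as the paper: goodness via Theorem~\ref{ThmItPres} to preserve the three unbounded families (I)--(III); Hechler and Cohen reals along $L_H\cap\lambda\cdot\mu$ for the $\mu$-values; Theorem~\ref{dfrakbig} for $\rfrak,\non(\Nwf)\ge\lambda$; and the book-keeping clauses (8)--(10) for the lower bounds in the appropriate case.

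The one substantive difference is your treatment of $\gfrak\le\theta$. You stratify $L$ by the $\beta^\Sigma$-values of the negative coordinates, setting $L[\beta]=\{x:\beta^\Sigma(x(k))<\beta\text{ for all negative }x(k)\}$, and then look for a suitable $x'\in L[\beta+1]\setminus L[\beta]$ at which to invoke Theorem~\ref{newrealint}. This can be made to work, but the verification of clause~(i) is delicate: you must ensure both that the new real actually lies in $W_{\beta+1}$ (which is not automatic, since $L_{x'}\not\subseteq L[\beta+1]$ in general) and that at such an $x'$ a genuinely new real is added (so $x'\in L_H$, which a pre-appropriate iteration does not guarantee at every position with a given negative entry). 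The paper sidesteps these issues entirely: it picks any $\theta$-many ordinals $\zeta_\alpha\in L_H\cap\lambda\cdot\mu$ and takes $Z_\alpha=L\setminus\{\zeta_\beta:\beta\ge\alpha\}$, so that $(Z_{\alpha+1}\setminus Z_\alpha)\cap L_H\cap\lambda\cdot\mu=\{\zeta_\alpha\}$, clause~(iii) follows immediately from the $<\theta$-support property, and clause~(i) is a direct application of Theorem~\ref{newrealint} at the ordinal $\zeta_\alpha$ (where $\bar L_{\zeta_\alpha}\in\Iwf^\delta$ is automatic). Your stratification is more structural but buys nothing extra here; the paper's is both simpler and self-contained. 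For the appropriate-case lower bounds you build explicit small subposets of $\Loc$ and $\Bor$, whereas the paper simply takes a small transitive model $N$ and uses $\Bor^N$ (and analogously for the others); both routes are fine.
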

\begin{proof}
   By the preservation Theorem \ref{ThmItPres}, $\Por\rest L$ is $\theta_0$-$\in^*$-good, $\theta_1$-$\pitchfork$-good and $\theta$-$\propto$-good. By hypotheses (I), (II) and (III), each respective family is preserved in the forcing extension, so they witness $\add(\Nwf)\leq\theta_0$, $\cov(\Nwf)\leq\bfrak_\pitchfork\leq\theta_1$ and $\sfrak\leq\theta$.

   For $\alpha\in L_H\cap\lambda\cdot\mu$ let $d_\alpha$ be the dominating real added at $\alpha$ and let $c_\alpha$ be the Cohen real added at $\alpha$ in the iteration (recall that Hechler forcing adds Cohen reals). As $L_\alpha\in\Iwf_\alpha$, then $d_\alpha$ is Hechler over $V^{\Por\rest L_\alpha}$ and $c_\alpha$ is Cohen over the same model. Therefore, $\{d_\alpha:\alpha\in L_H\cap\lambda\cdot\mu\}$ forms a scale of cofinality $\mu$ in $V^{\Por\rest L}$, so $\bfrak=\dfrak=\mu$ in that model (also use Lemma \ref{general_properties}). On the other hand, $\Por\frestr L$ forces $\non(\Mwf)\leq\mu\leq\cov(\Mwf)$ because of the $\mu$-cofinal Cohen reals added, so $\add(\Mwf)=\cof(\Mwf)=\mu$ is clearly forced.

   For $\alpha<\theta$ we put $W_\alpha=V^{\Por\upharpoonright Z_\alpha}$ and $W_\theta=V^{\Por\upharpoonright L}$ where $\la Z_\alpha\ra_{\alpha<\theta}$ is an increasing sequence of subsets of $L$ whose union is $L$ and $(Z_{\alpha+1}\menos Z_\alpha)\cap L_H\cap \lambda\cdot\mu\neq\emptyset$. As a consequence of Theorem \ref{newrealint}, $\la W_\alpha\ra_{\alpha\leq\theta}$ satisfies the hypothesis of Lemma \ref{lemmagfrak}, so $\gfrak\leq\theta$ holds in $V^{\Por\upharpoonright L}$.

   In $V^{\Por\rest L}$, it is clear that $\cfrak\leq\lambda$ because $\Por\rest L$ has size $L$. On the other hand, $\lambda\leq\dfrak_\pitchfork\leq\non(\Nwf)$ and $\lambda\leq\rfrak$ by Theorem \ref{dfrakbig} because $\Dor$ is $\pitchfork$-good and $\propto$-good.

   Now, if the iteration is appropriate, it further forces $\theta_0\leq\add(\Nwf)$, $\theta_1\leq\cov(\Nwf)$ and $\theta\leq\pfrak\leq\sfrak$ (recall that $\pfrak\leq\gfrak$). We show the second one (the others are proven similarly). In $V^{\Por\rest L}$, let $\Bwf$ be a family of Borel null sets of size $<\theta_1$ so there is a transitive model $N$ of a large enough fragment of ZFC such that $\Bwf\subseteq N$ (the Borel codes) and $|N|<\theta_1$. By (8) of Definition \ref{DefAppr}, there is an $x\in L_R$ such that $\Qor_x=\Bor^N$, so $\Qor_x$ has already added a random real over $N$ and $\Bwf$ does not cover that real.
\end{proof}

To prove the Main Theorem, we need to construct an appropriate iteration that forces $\afrak=\lambda$. The following lemma is essential to construct this iteration.

\begin{mainlemma}
Let $\theta \leq\delta <\lambda$. Let $\Por\rest \la L^\delta,\bar{\mathcal{I}}^\delta\ra$ be a pre-appropriate iteration and let $\dot{\mathcal{A}}$ be a $\Por\rest L^\delta$-name for an almost disjoint family such that $\theta^+\leq|\dot{\mathcal{A}}|<\lambda$. Then there is $\delta^\prime$, $\delta <\delta^\prime <\lambda$ and an appropriate iteration $\Por'=\Por'\rest \la L^{\delta^\prime},\bar{\mathcal{I}}^{\delta^\prime}\ra$ such that
\begin{enumerate}[(a)]
\item $\Por'\restrict L^\delta = \Por\restrict L^\delta$, and
\item $\Vdash_{\Por'\restrict L^{\delta^\prime}} `` \dot{\mathcal{A}}\;\hbox{is not maximal}"$.
\end{enumerate}
\end{mainlemma}
\begin{proof}
This proof is inspired by \cite[Thm 3.3]{brendle02}.

Let $\dot{\mathcal{A}}=\{\dot{a}_\epsilon:\epsilon <\nu\}$ for some $\theta^+\leq\nu<\lambda$ be a $\Por\restrict \la L^\delta,\mathcal{I}^\delta\ra$-name for an almost disjoint  family. For every $\dot{a}_\epsilon$ there is a $B_\epsilon \subseteq L^\delta$ of size $<\theta$ such that $\dot{a}_\epsilon$ is a $\Por\rest B_\epsilon$-name for a real. We may assume that $B_\epsilon$ is c.i.s.t..  Indeed, start with an arbitrary $B^0_\epsilon$ of size less than $\theta$ such that $\dot{a}_\epsilon$ is a $\Por\rest B^0_\epsilon$-name (by Lemma~\ref{general_properties} such $B_\epsilon^0$ exists) and, for $n\in\omega$, define $B^{n+1}_\epsilon$ as the closure of $B^n_\epsilon\cup\{C_x^\delta:x\in B^n_\epsilon\cap L^\delta_C\}$ under initial segments. Take $B_\epsilon=\bigcup_{n\in\omega} B^n_\epsilon$.

By the $\Delta$-system Lemma (because $\theta^{<\theta}=\theta$) we may assume that $\la B_\alpha:\alpha<\theta^+\ra$ is a $\Delta$-system with root $R$ which is also a c.i.s.t. (so $C^\delta_x\subseteq R$ for $x\in R\cap L^\delta_C$).  By Lemma \ref{notmanytypes} we thin out the $\Delta$-system so that, for all $\alpha\neq\beta$, there is a $\Por\restrict \la L^\delta,\mathcal{I}^\delta\ra$-isomorphism
$\phi_{\alpha,\beta}: B_\alpha\to B_\beta$, which lifts to an isomorphism $\Phi_{\alpha,\beta}: \Por\rest \la B_\alpha, \mathcal{I}\rest B_\alpha\ra\to \Por\rest\la B_\beta,\mathcal{I}\rest B_\beta\ra$ (see Definition~\ref{DefItIsom}).
Moreover we may assume that
\begin{itemize}
\item $\phi_{\alpha,\beta}\rest R$ is the identity map,
\item  $\phi_{\alpha,\beta}[B_\alpha\cap L^\delta_F]= B_\beta\cap L^\delta_F$, $\phi_{\alpha,\beta}[B_\alpha\cap L^\delta_R]= B_\beta\cap L^\delta_R$, $\phi_{\alpha,\beta}[B_\alpha\cap L^\delta_A]= B_\beta\cap L^\delta_A$,
\item if $x\in B_\alpha\cap L^\delta_F$ then $\Phi_{\alpha,\beta}$ sends $\dot{F}^\delta_x$ to $\dot{F}^\delta_{\phi_{\alpha,\beta}(x)}$ (recall that $\phi_{\alpha,\beta}[C^\delta_x]=C^\delta_{\phi_{\alpha,\beta}(x)}$),
\item $\Phi_{\alpha,\beta}$ sends $\dot{a}_\alpha$ to $\dot{a}_\beta$,
\item $\phi^{-1}_{\alpha,\beta}=\phi_{\beta,\alpha}$ and $\phi_{\beta,\gamma}\circ\phi_{\alpha,\beta}=\phi_{\alpha,\gamma}$, likewise for the induced isomorphisms.
\end{itemize}
By shrinking again, we also assume that there is a $\rho_0<\theta$ such that, for any $\alpha<\theta^+$, $x\in B_\alpha$ and $k<|x|$, if $x(k)$ is negative then $x(k)\in S_{\rho}$ for some $\rho<\rho_0$.

Let $T\subseteq L^{\theta,\theta}$ be a tree of size $<\theta$ that represents $\la B_\alpha\ra_{\alpha <\theta^+}$, that is, for each $\alpha<\theta^+$ there is a bijection $x_\alpha:T\to B_\alpha$ satisfying (i)-(viii) of Definition \ref{DefItIsom} and such that $\phi_{\alpha,\beta}\circ x_\alpha=x_\beta$ for any $\beta\neq\alpha$.

Let $S\subseteq T$ be a tree which represents the root of the $\Delta$-system, that is, $x_\alpha[S]=R$ for each $\alpha<\theta^+$. Note that for all $\alpha,\beta$ in $\theta^+$ and all $t\in S$ we have $x_\alpha(t)=x_\beta(t)$. Furthermore we may assume that whenever $s\in S\cup\{\emptyset\}$ and $t:=s^\smallfrown \la \xi\ra\in T\backslash S$ then, for all $\alpha<\theta^+$, we have that
\begin{itemize}
\item $x_\alpha(t)(|s|)>\theta$, in case $\xi$ is positive, and
\item $x_\alpha(t)(|s|) <\theta^*$, in case $\xi$ is negative.
\end{itemize}

Now, let $\{ t_\eta: \eta < \kappa\}$ with $\kappa<\theta$ enumerate $\{s^\smallfrown \la \xi\ra: s\in S\cup\{\emptyset\}, s^\smallfrown \la \xi\ra\in T\backslash S\}$. Consider the coloring
$F: [\theta^+]^2\to\kappa$ defined as follows:
for $\alpha<\beta$ let
\begin{multline*}
  F(\alpha,\beta)=\min\{\eta<\kappa: \textrm{either }x_\alpha(t_\eta)(|t_\eta|-1)>x_\beta(t_\eta)(|t_\eta|-1)\textrm{\ and }t_\eta(|t_\eta|-1)\textrm{\ is positive,}\\
  \textrm{or }x_\alpha(t_\eta)(|t_\eta|-1)<x_\beta (t_\eta)(|t_\eta|-1)\textrm{\ and }t_\eta(|t_\eta|-1)\textrm{\ is negative}\}
\end{multline*}
when such a $\eta$ exists, otherwise put $F(\alpha,\beta)=0$.

We will use the following reformulation of Erd\"os-Rado theorem.
\begin{clm} If $F:[\theta^+]^2\to \kappa$, where $\kappa<\theta$ and $\theta^{<\theta}=\theta$, then there is a homogeneous set of  size $\theta$.
\end{clm}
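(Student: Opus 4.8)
The plan is to read the Claim as the ordinary partition relation $\theta^+\to(\theta)^2_\kappa$ and to prove it by the classical Erd\H{o}s--Rado elementary-submodel argument, the point being that $\theta^{<\theta}=\theta$ forces $\theta$ to be regular and lets us build a chain of submodels of size $\theta$ closed under sequences of length $<\theta$. (When $\theta$ is a successor $\mu^+$ the hypothesis gives $2^\mu=\mu^+$ and one could instead just quote Erd\H{o}s--Rado in the form $(2^\mu)^+\to(\mu^+)^2_\mu$; but the submodel proof below works uniformly, including the case where $\theta$ is inaccessible, so I will run that.)

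First I would fix a large regular $\chi$ and a well-order $<^*$ of $H(\chi)$ and construct a $\subseteq$-increasing continuous chain $\langle M_\alpha:\alpha<\theta\rangle$ of elementary submodels of $(H(\chi),\in,<^*)$ with $\{F,\theta^+,\kappa\}\cup(\kappa+1)\subseteq M_0$ such that, for each $\alpha<\theta$: $|M_{\alpha+1}|=\theta$, $M_{\alpha+1}$ is closed under sequences of length $<\theta$ (both possible because $\theta^{<\theta}=\theta$), $\langle M_\beta:\beta\le\alpha\rangle\in M_{\alpha+1}$, and $M_\alpha\cap\theta^+$ is an ordinal $\delta_\alpha<\theta^+$ (the last because $|M_\alpha|\le\theta<\theta^+=\cf(\theta^+)$). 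Writing $M=\bigcup_{\alpha<\theta}M_\alpha$ and $\delta=\sup_{\alpha<\theta}\delta_\alpha$, regularity of $\theta^+$ gives $\delta<\theta^+$, so I can fix $\gamma$ with $\delta\le\gamma<\theta^+$; this $\gamma$ lies ``above'' all of $M$ and will steer the construction.

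Next I would pick, by recursion on $\alpha<\theta$, ordinals $a_\alpha\in M_{\alpha+1}\cap\theta^+$ with $a_\alpha\ge\delta_\alpha$ (so the $a_\alpha$ are automatically pairwise distinct and lie outside $M_\alpha$) and $F(\{a_\beta,a_\alpha\})=F(\{a_\beta,\gamma\})$ for all $\beta<\alpha$. The reason this succeeds: at stage $\alpha$ both the sequence $\langle a_\beta:\beta<\alpha\rangle$ and the ``trace'' $t=\langle F(\{a_\beta,\gamma\}):\beta<\alpha\rangle$ are sequences of length $<\theta$ whose entries lie in $M_{\alpha+1}$ (each $a_\beta\in M_{\beta+1}\subseteq M_\alpha$, and each $t(\beta)\in\kappa\subseteq M_{\alpha+1}$), hence belong to $M_{\alpha+1}$ by closure; in $V$ the ordinal $\gamma\ge\delta_\alpha$ witnesses the first-order statement ``some $\eta<\theta^+$ with $\eta\ge\delta_\alpha$ satisfies $F(\{a_\beta,\eta\})=t(\beta)$ for all $\beta<\alpha$'', whose parameters ($F$, $\theta^+$, $\delta_\alpha$, $\langle a_\beta:\beta<\alpha\rangle$, $t$) are all in $M_{\alpha+1}$, so elementarity hands me such an $\eta$ inside $M_{\alpha+1}$, which I take as $a_\alpha$. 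Finally, since for $\beta<\alpha$ the colour $F(\{a_\beta,a_\alpha\})=F(\{a_\beta,\gamma\})$ depends only on $\beta$, setting $d(\beta)$ to be this common value gives $d\colon\theta\to\kappa$; as $\kappa<\theta=\cf(\theta)$, $d$ is constant, say $\equiv i$, on some $H\subseteq\theta$ of cardinality $\theta$, and then $\{a_\beta:\beta\in H\}$ is an $F$-homogeneous set of size $\theta$, since for $\beta<\beta'$ in $H$ one has $F(\{a_\beta,a_{\beta'}\})=d(\beta)=i$.

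I expect the only real bookkeeping to be the submodel construction of the first step --- arranging simultaneously that each $M_{\alpha+1}$ has size $\theta$, is closed under sequences of length $<\theta$, contains $\langle M_\beta:\beta\le\alpha\rangle$, and meets $\theta^+$ in an ordinal --- but this is entirely routine given $\theta^{<\theta}=\theta$, so there is no genuine obstacle here.
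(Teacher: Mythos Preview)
Your proof is correct and is precisely the standard Erd\H{o}s--Rado argument; the paper itself gives no argument at all beyond ``Similar to \cite[Lemma III.8.11]{kunen11}'', so you have supplied exactly the kind of proof that reference points to. One cosmetic point: the assertion that $M_\alpha\cap\theta^+$ is an ordinal is not justified by the cardinality bound alone (that only gives boundedness), but nothing in your argument actually needs it---taking $\delta_\alpha=\sup(M_\alpha\cap\theta^+)$, which lies in $M_{\alpha+1}$ since $M_\alpha\in M_{\alpha+1}$, makes everything go through verbatim.
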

\begin{proof}
   Similar to~\cite[Lemma III.8.11]{kunen11}.
\end{proof}

Thus we can find an $F$-homogeneous set of size $\theta$. It should have color $0$ since otherwise we will have an infinite decreasing chain of ordinals. Without loss of generality, this homogeneous set is $\theta$.

For every $s\in S\cup\{\emptyset\}$, $\xi$ and $\eta$ such that $s^\smallfrown \la \xi\ra$, $s^\smallfrown \la \eta\ra$ are in $T\backslash S$, denote by $\upsilon_{s^\smallfrown\la\xi\ra}$ the limit of $\{ x_\alpha (s^\smallfrown \la\xi\ra)(|s|)\}$ (which is a supremum if $\xi$ is positive, or an infimum otherwise). We may assume the following:

\begin{itemize}
\item if $\xi<\eta$ are positive, then
\begin{itemize}
\item either $\upsilon_{s^\smallfrown\la\xi\ra}< x_0(s^\smallfrown \la\eta\ra)(|s|)$ (when $\upsilon_{s^\smallfrown\la\xi\ra}<\upsilon_{s^\smallfrown\la\eta\ra}$),
\item or $x_\alpha(s^\smallfrown\la \eta\ra)(|s|) < x_\beta(s^\smallfrown \la\xi\ra)(|s|)$ for all $\alpha<\beta<\theta$ (when $\upsilon_{s^\smallfrown\la\xi\ra}=\upsilon_{s^\smallfrown\la\eta\ra}$).\footnote{In that case, there is a club subset of $\theta$ with that property (for fixed $s^\smallfrown \la \xi\ra,s^\smallfrown \la \eta\ra$).}
\end{itemize}
\item if $\xi<\eta$ are negative, then
\begin{itemize}
\item either $x_0(s^\smallfrown \la\xi\ra)(|s|)<\upsilon_{s^\smallfrown\la\eta\ra}$,
\item or $x_\alpha(s^\smallfrown \la \xi\ra)(|s|)> x_\beta (s^\smallfrown \la \eta\ra)(|s|)$ for all $\alpha<\beta<\theta$.
\end{itemize}
\end{itemize}

Recall that any object in $L^\delta$ contains only elements of $(\delta^*,\delta)$ from the second coordinate on. Now, choose $\gamma^*\in S_{\rho_0}$ such that $\delta<\gamma<\lambda$ (exists because $S_{\rho_0}$ is co-initial in $\lambda^*$) and let $\delta'<\lambda$ be any ordinal larger than $\gamma$ (we can also allow for $\delta^\prime$ to be a successor ordinal).

We define $x_\nu:T\to L^{\delta'}$ as follows.

\begin{itemize}
\item if $t\in S$ then $x_\nu(t)=x_0(t)\in R$,
\item if $t=s^\smallfrown \la\xi\ra\in T\backslash S$ with $s\in S\cup\{\emptyset\}$, then
\begin{itemize}
\item if $\xi$ is positive, then
\[
x_\nu(t)=
\begin{cases}
 x_\nu(s)^\smallfrown \la\upsilon_{s^\smallfrown\la\xi\ra},\gamma^*\ra^\smallfrown x_0(t)(|s|), & \text{if }|s|\neq0 \\
 x_\nu(s)^\smallfrown \la\upsilon_{s^\smallfrown\la\xi\ra},\gamma^*\ra^\smallfrown \la \xi\ra,                         & \text{if } |s|=0
\end{cases}
\]
\item if $\xi$ is negative, then $x_\nu(t)=x_\nu(s)^\smallfrown\la \upsilon_{s^\smallfrown\la\xi\ra},\gamma\ra^\smallfrown x_0(t)(|s|)$.
\end{itemize}
\item if $t\in T$ then
$x_\nu(t)=x_\nu(t\rest m)^\smallfrown x_0(t)\rest [m, |t|)$ were $m$ is the minimal (if exists) such that $t\rest m\in T\backslash S$,.
\end{itemize}

Put $B_\nu=\{x_\nu(t):t\in T\}$, which is a subset of $L^{\delta'}$ that is isomorphic (as a linear order) with $T$ via $x_\nu$. Thus, $\phi_{\alpha,\nu}:=x_\nu\circ x_\alpha^{-1}:B_\alpha\to B_\nu$ is an order isomorphism for all $\alpha<\theta$. Moreover, $B_\nu\cap L^\delta=R$ and $\phi_{\alpha,\nu}\rest R$ is the identity map. Let $\phi_{\nu,\alpha}=\phi^{-1}_{\alpha,\nu}$. Note that $\phi_{0,\nu}$ is also a template-isomorphism (see Defintion \ref{DefTempIsom}) between $\langle B_0,\bar{\Iwf}^\delta\rest B_0\rangle$ and $\la B_\nu,\bar{\Jwf}\ra$ where $\Jwf_z=\{\phi_{0,\nu}[X]:X\in\Iwf^\delta_{\phi_{\nu,0}(z)}\rest B_0\}$ for each $z\in B_\nu$.

\begin{claim}\label{claimInnoc1}
  $\la B_\nu,\Iwf^{\delta'}\rest B_\nu\ra$ is a $\theta$-innocuous extension of $\la B_\nu,\bar{\Jwf}\ra$.
\end{claim}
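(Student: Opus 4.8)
The plan is to verify directly the two clauses of Definition~\ref{DefInnoc}. Spelled out for the present situation, they require: (i) $\Jwf_z\subseteq\Iwf^{\delta'}_z\restrict B_\nu$ for every $z\in B_\nu$; and (ii) every $X$ in the ideal generated by $\Iwf^{\delta'}_z\restrict B_\nu$ with $|X|<\theta$ belongs to $\hat{\Jwf}_z$. The first observation is that $B_\nu=x_\nu[T]$ has size $\le|T|<\theta$, so every subset of $B_\nu$ has size $<\theta$; hence the cardinality hypothesis in (ii) is vacuous and (ii) must be shown for \emph{all} such $X$. Next, since $\Jwf_z$ and $\Iwf^{\delta'}_z\restrict B_\nu$ are closed under finite unions (and the generated ideals under subsets), and since $\Jwf_z=\{\phi_{0,\nu}[Y]:Y\in\Iwf^\delta_{\phi_{\nu,0}(z)}\restrict B_0\}$ by construction, I would reduce (i) to: for every basic set $X$ of $\Iwf^\delta$ with $X\subseteq L^\delta_{z'}$ and $X\cap B_0\ne\emptyset$ (where $z'=\phi_{\nu,0}(z)$), $\phi_{0,\nu}[X\cap B_0]\in\Iwf^{\delta'}_z\restrict B_\nu$; and reduce (ii) to: for every basic set $Z$ of $\Iwf^{\delta'}$ with $\emptyset\ne Z\cap B_\nu\subseteq B_\nu\cap L^{\delta'}_z$, there is $W\in\Jwf_z$ with $Z\cap B_\nu\subseteq W$. (The case analysis below in fact delivers $\hat{\Jwf}_z=\widehat{(\Iwf^{\delta'}\restrict B_\nu)}_z$ for every $z$, although only these two inclusions are needed.)

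The core of the proof is then a case analysis on the three kinds of basic set --- the initial segments $L_\alpha$, the sets $J_x$ for relevant $x$, and the singletons --- carried out from the explicit formula for $x_\nu$, using throughout that $\phi_{0,\nu}=x_\nu\circ x_0^{-1}$ is an order isomorphism $B_0\to B_\nu$ that is the identity on $R=B_\nu\cap L^\delta$. Singletons are immediate. For initial segments I would combine order-preservation of $\phi_{0,\nu}$ with the fact that each limit ordinal $\upsilon_{s^\smallfrown\la\xi\ra}$ is placed exactly at the coordinate at which the corresponding branch of $B_\nu$ leaves $S$: this is what makes an initial segment of $B_0$ of the form $B_0\cap L^\delta_\alpha$ land on an initial segment of $B_\nu$ of the form $B_\nu\cap L^{\delta'}_{\alpha'}$, and conversely. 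Here one uses the preliminary arrangements on the $\upsilon$'s --- the dichotomy ``$\upsilon_{s^\smallfrown\la\xi\ra}<x_0(s^\smallfrown\la\eta\ra)(|s|)$, or else $x_\alpha(s^\smallfrown\la\eta\ra)(|s|)<x_\beta(s^\smallfrown\la\xi\ra)(|s|)$ for all $\alpha<\beta<\theta$'' and its negative analogue --- together with the assumption that the coordinates by which branches of the $B_\alpha$ leave $S$ are large (above $\theta$ when positive, below $\theta^*$ when negative); these ensure that two distinct branches of $T$ are never glued onto the same leaving-coordinate in $B_\nu$, which is precisely what the conclusion for initial segments needs.

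The main case --- and the step I expect to be the real obstacle --- is that of the $J$-sets, and in particular direction (ii). Direction (i) here is relatively direct: if $J^\delta_x$ is basic with $x$ relevant and $J^\delta_x\cap B_0\ne\emptyset$, then $x\restrict(|x|-1)=x_0(t')\in B_0$ for some $t'\in T$, and the definition of $x_\nu$ shows that $\phi_{0,\nu}[J^\delta_x\cap B_0]$ is $J^{\delta'}_y\cap B_\nu$ for a suitable relevant $y\in L^{\delta'}$ end-extending $x_\nu(t')$ (or else a singleton). For (ii) the difficulty is that a relevant $y\in L^{\delta'}$ with $J^{\delta'}_y\cap B_\nu\ne\emptyset$ need not lie in $B_\nu$, so one must rule out that the coordinates newly introduced by $x_\nu$ --- the fixed negative $\gamma^*$, the fixed positive $\gamma$, and the $\upsilon$'s --- combine to let $J^{\delta'}_y$ slice $B_\nu$ in a way not produced by $\bar{\Jwf}$. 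Preventing this is exactly the purpose of the remaining preparations, and the plan is to use each of them: the choice of $\gamma^*\in S_{\rho_0}$ with $\rho_0<\theta$ picked above every $\rho$ for which some negative coordinate occurring in some $B_\alpha$ lies in $S_\rho$ (so that, in the relevance ordering of Definition~\ref{DefShelahTemp}(4)(iii), the colour $\beta^\Sigma(\gamma^*)=\rho_0$ strictly dominates all colours coming from negative coordinates of the $B_\alpha$'s, which restricts along which branches of $B_\nu$ a relevant sequence can run); the $F$-homogeneity of colour $0$ supplied by the Erd\H{o}s--Rado argument above (which fixes, level by level, the direction in which $x_\alpha(t_\eta)(|t_\eta|-1)$ moves with $\alpha$, hence which relevant $y$ can meet more than one element of $B_\nu$); and the $\upsilon$-dichotomies already invoked. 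Pushing the bookkeeping through with these inputs, one checks that $J^{\delta'}_y\cap B_\nu$ is always empty, a singleton, or $\phi_{0,\nu}[J^\delta_x\cap B_0]$ for the matching relevant $x\in L^\delta$, which completes (ii) and the Claim. The computations in this last case are routine but lengthy, and all the subtlety is concentrated there.
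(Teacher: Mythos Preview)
Your proposal is correct and follows essentially the same approach the paper invokes: the paper's own proof is simply ``Similar to the argument in \cite[Thm 3.3]{brendle02}'', and your outline --- reducing both directions to basic sets, handling singletons and initial segments via order-preservation and the placement of the $\upsilon$'s, and concentrating the work on the $J$-sets using the colour $\rho_0$ of $\gamma^*$, the Erd\H{o}s--Rado colour-$0$ homogeneity, and the $\upsilon$-dichotomies --- is precisely that argument spelled out. One minor caution: in your final clause for (ii) you assert that $J^{\delta'}_y\cap B_\nu$ is always empty, a singleton, or exactly $\phi_{0,\nu}[J^\delta_x\cap B_0]$; what you actually need (and what the bookkeeping yields) is only the containment $J^{\delta'}_y\cap B_\nu\subseteq W$ for some $W\in\Jwf_z$, so do not overcommit to equality in the write-up.
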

\begin{proof}
   Similar to the argument in \cite[Thm 3.3]{brendle02}.
\end{proof}

Let
$$L^{\delta^\prime}_F:= L^\delta_F\cup \phi_{0,\nu}[ L^\delta_F\cap B_0]\cup\{\la\eta,\gamma,\gamma,\gamma\ra:\eta\in\lambda\cdot\mu,\ \eta\equiv0\mod3\},$$
$$L^{\delta^\prime}_R:= L^\delta_R\cup \phi_{0,\nu}[ L^\delta_R\cap B_0]\cup\{\la\eta,\gamma,\gamma,\gamma\ra:\eta\in\lambda\cdot\mu,\ \eta\equiv1\mod3\},$$
$$L^{\delta^\prime}_A:= L^\delta_A\cup \phi_{0,\nu}[ L^\delta_A\cap B_0]\cup\{\la\eta,\gamma,\gamma,\gamma\ra:\eta\in\lambda\cdot\mu,\ \eta\equiv2\mod3\},$$
let $L^{\delta'}_C=L^{\delta^\prime}_A\cup L^{\delta'}_R\cup L^{\delta'}_F$ and $L^{\delta^\prime}_H= L^{\delta^\prime}\backslash L^{\delta^\prime}_C$ which contains $L^\delta_H$. Fix a bijection $g:\lambda\to\lambda\times\theta$ and an enumeration $\{C_{\zeta,\beta}:\beta<\lambda\}$ of $[L^{\delta'}_{\lambda\cdot\zeta}]^{<\theta}$ (which is a subset of $\hat{\Iwf}^{\delta'}_{\lambda\cdot\zeta}$) for each $\zeta<\mu$. When $z$ is an ordered pair, $(z)_0$ denotes its first coordinate and $(z)_1$ its second.

For $x\in L^{\delta'}_C$,
\begin{itemize}
\item if $x\in L^{\delta}_C$, then let $C^{\delta^\prime}_x:= C^\delta_x$;
\item if $x=x_\nu(t)$ for some $t\in T$ let $C^{\delta^\prime}_x:=\phi_{0,\nu}[C^\delta_{x_0(t)}]$ (note that this does not disagree with the previous bullet);
\item if $x=\la \eta,\gamma,\gamma,\gamma\ra$ and $\eta=\lambda\cdot\zeta+3\cdot\varrho+i$ where $\zeta<\mu$, $\varrho<\lambda$ and $i<3$, let $C^{\delta'}_x=C_{\zeta,(g(\varrho))_0}$.
\end{itemize}
Note that, for $\alpha\leq\nu$, if $x\in B_\alpha\cap L^{\delta'}_C$ then $C^{\delta'}_x\subseteq B_\alpha$.

We construct a $(\la\Dor\ra,\theta)$-standard iteration $\Por'\rest\la L^{\delta^\prime},\bar{\mathcal{I}}^{\delta^\prime}\ra$ such that
\begin{enumerate}[(i*)]
\setcounter{enumi}{0}
   \item $L^{\delta'}_H$ are the coordinates where (full) Hechler forcing is used, while $L^{\delta'}_C$ are the coordinates where ccc posets of size $<\theta$ are used according to what we naturally mean for coordinates in $L^{\delta'}_A$ (localization poset), $L^{\delta'}_R$ (random) and in $L^{\delta'}_F$ (Mathias-Prickry);
   \item for $X\subseteq L^\delta$, $\Por'\rest X=\Por\rest X$;
   \item for $z\in L^{\delta}_C$, $\Qnm'_z=\Qnm_z$;
   \item there is a forcing isomorphism $\Phi_{\nu,0}:\Por'\rest B_\nu\to\Por\rest B_0$ that lifts $\phi_{\nu,0}$ (in the sense of (viii*) and (ix*) below) and $\Phi_{\nu,0}\rest(\Por\rest R)$ is the identity;
   \item for each $\zeta<\mu$ and $\beta<\lambda$, $\{\dot{F}'_{\zeta,\beta,\alpha}:\alpha<\theta\}$ enumerates\footnote{This family of names has size $\leq\theta$ because $|\Por'\rest C_{\zeta,\beta}|\leq\theta$ as noted in the proof of Lemma \ref{notmanytypes}.} all (nice) $\Por'\rest C_{\zeta,\beta}$-names for filter bases of size $<\theta$ and, if $\eta=\lambda\cdot\zeta+3\cdot\varrho$ for some $\varrho<\lambda$ and if $z=\la\eta,\gamma,\gamma,\gamma\ra$ then $\Qnm_z=\Mor_{\dot{F}'_{\zeta,g(\varrho)}}$;
   \item for each $\zeta<\mu$ and $\beta<\lambda$, $\{\dot{\Bor}_{\zeta,\beta,\alpha}:\alpha<\theta\}$ enumerates all (nice) $\Por'\rest C_{\zeta,\beta}$-names for subalgebras of $\Bor$ of size $<\theta_1$ and, if $\eta=\lambda\cdot\zeta+3\cdot\varrho+1$ for some $\varrho<\lambda$ and if $z=\la\eta,\gamma,\gamma,\gamma\ra$ then $\Qnm_z=\Bnm_{\zeta,g(\varrho)}$;
   \item for each $\zeta<\mu$ and $\beta<\lambda$, $\{\dot{\Loc}_{\zeta,\beta,\alpha}:\alpha<\theta\}$ enumerates all (nice) $\Por'\rest C_{\zeta,\beta}$-names for subposets of $\Loc$ of size $<\theta_0$ and, if $\eta=\lambda\cdot\zeta+3\cdot\varrho+2$ for some $\varrho<\lambda$ and if $z=\la\eta,\gamma,\gamma,\gamma\ra$ then $\Qnm_z=\Locnm_{\zeta,g(\varrho)}$.
\end{enumerate}
Conditions (v*), (vi*) and (vii*) guarantee that $\Por'\rest\la L^{\delta^\prime},\mathcal{I}^{\delta^\prime}\ra$ is an appropriate iteration. For instance, if $\Qnm$ is a $\Por'\rest L^{\delta'}$-name for a subalgebra of random forcing of size $<\theta_1$, by Lemma \ref{general_properties} there exists $C'\subseteq L^{\delta'}$ of size $<\theta$ such that $\Qnm$ is (forced to be equal to) a $\Por'\rest C'$-name, so there is $\zeta<\mu$ such that $C'\subseteq L^{\delta'}_{\lambda\cdot\zeta}$ and there exists a $\beta<\lambda$ such that $C'=C_{\zeta,\beta}$. By (vi*), $\Qnm=\Bnm_{\zeta,\beta,\alpha}$ for some $\alpha<\theta$ so $\Qnm=\Qnm_z$ where $z=\la\eta,\gamma,\gamma,\gamma\ra$, $\eta=\lambda\cdot\mu+3\cdot\varrho+1$ and $\varrho=g^{-1}(\beta,\alpha)$.

By Claim \ref{claimInnoc1} and Lemmas \ref{InnConstr} and \ref{InnEqv}, there is a $(\la\Dor\ra,\theta)$-template iteration $\Por'\rest\la B_\nu,\bar{\Iwf}^{\delta'}\rest B_\nu\ra$ and a forcing isomorphism $\Phi_{\nu,0}:\Por'\rest B_\nu\to\Por\rest B_0$ satisfying
\begin{enumerate}[(i*)]
\setcounter{enumi}{7}
   \item $\Phi_{\nu,0}\rest(\Por'\rest \phi_{0,\nu}[X]):\Por'\rest \phi_{0,\nu}[X]\to\Por\rest X$ is an isomorphism for any $X\subseteq B_0$ and
   \item $\Qnm_{\phi_{0,\nu}(x)}$ is the $\Por'\rest C^{\delta'}_{\phi_{0,\nu}(x)}$-name associated to $\Qnm_x$ via $\Phi_{\nu,0}$ for any $x\in B_0$.
\end{enumerate}
It is clear that $\Por'\rest R=\Por\rest R$ and $\Phi_{\nu,0}\rest(\Por\rest R)$ is the identity map. Therefore, as $L^\delta\cap B_\nu=R$, we can easily extend $\Por'\rest\la B_\nu,\bar{\Iwf}^{\delta'}\rest B_\nu\ra$ to an iteration $\Por'\rest\la L^\delta\cup B_\nu,\bar{\Iwf}^{\delta'}\rest(L^\delta\cup B_\nu)\ra$ satisfying (i*). Furthermore, as $(L^\delta\cup B_\nu)\cap\{\la\eta,\gamma,\gamma,\gamma\ra:\eta\in\lambda\cdot\mu\}=\emptyset$, we can extend the iteration to $\Por'\rest\la L^{\delta^\prime},\bar{\mathcal{I}}^{\delta^\prime}\ra$ satisfying, additionally, (v*)-(vii*). Observe that, for any $0<\alpha<\nu$, $\Phi_{\nu,\alpha}:=\Phi_{0,\alpha}\circ\Phi_{\nu,0}:\Por'\rest B_\nu\to\Por\rest B_\alpha$ is a forcing isomorphism that lifts $\phi_{\nu,\alpha}$ and satisfies similar properties as (iv*), (viii*) and (ix*).

\bigskip
Let $\dot{a}_\nu$ be the $\Por'\rest B_\nu$-name corresponding to $\dot{a}_0$ via $\Phi_{\nu,0}$.  To finish the proof, we show that $\Vdash_{\Por'\rest L^{\delta^\prime}} \forall\epsilon <\nu (\dot{a}_\epsilon\cap\dot{a}_\nu\;\hbox{is finite})$. Fix $\epsilon <\nu$. As $|B_\epsilon|<\theta$ and $\la B_\alpha:\alpha<\theta\ra$ forms a $\Delta$-system, there is an $\alpha_\epsilon <\theta$ such that $\forall \alpha\in[\alpha_\epsilon,\theta) (B_\alpha\cap B_\epsilon\subseteq R)$. Moreover, we may assume that
\begin{itemize}
  \item[(**)] For any $s\in S\cup\{\emptyset\}$ and $t=s^\smallfrown \la\xi\ra\in T\backslash S$, if $\xi$ is positive then
   $$\sup\{ y(|s|): y\in B_\epsilon, y\rest |s| = x_\nu (s)\;\hbox{and}\; y(|s|)< x_\nu(|s|)\} < x_{\alpha_\epsilon}(|s|)$$
   and if $\xi$ is negative then
   $$\inf\{ y(|s|): y\in B_\epsilon, y\rest |s| = x_\nu (s)\;\hbox{and}\; y(|s|)> x_\nu(|s|)\} > x_{\alpha_\epsilon}(|s|).$$
\end{itemize}

Take any $\alpha\in[\alpha_\epsilon,\theta)\menos\{\epsilon\}$ and consider the mapping $\phi: B_\nu\cup B_\epsilon\to B_\alpha\cup B_\epsilon$ where
\[x\mapsto \phi(x)=
\begin{cases}
\phi_{\nu,\alpha}(x), & \text{if } x\in B_\nu, \\
x,                     & \text{if }x\in B_\epsilon.
\end{cases}
\]

From (**), $\phi:\la B_\nu\cup B_\epsilon,\bar{\Jwf}'\ra\to\langle B_\alpha\cup B_\epsilon,\bar{\Iwf}^{\delta'}\rest(B_\alpha\cup B_\epsilon)\rangle$ is a template isomorphism where $\Jwf'_z=\{\phi^{-1}[X]:X\in\Iwf^{\delta}_{\phi(z)}\rest(B_\alpha\cup B_\epsilon)\}$ for any $z\in B_\nu\cup B_\epsilon$. Furthermore,

\begin{claim}\label{claimInnoc2}
   $\la B_\nu\cup B_\epsilon,\bar{\Iwf}^{\delta'}\rest(B_\nu\cup B_\epsilon)\ra$ is a $\theta$-innocuous extension of $\la B_\nu\cup B_\epsilon,\bar{\Jwf}'\ra$.
\end{claim}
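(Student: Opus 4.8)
The plan is to check the two clauses of Definition~\ref{DefInnoc} for the pair of templates $\la A,\bar{\Jwf}'\ra$ and $\la A,\bar{\Iwf}^{\delta'}\rest A\ra$ on the linear order $A:=B_\nu\cup B_\epsilon$; the argument parallels the proof of Claim~\ref{claimInnoc1} and of \cite[Thm.~3.3]{brendle02}, the new feature being that $\phi$ is the identity on $B_\epsilon$ and the canonical embedding of $B_0$ produced in the definition of $x_\nu$ on $B_\nu$, so the two pieces have to be glued. Since $|A|<\theta$, clause~(ii) reduces to showing, for every $z\in A$, that every trace on $A$ of a member of $\Iwf^{\delta'}$ lying below $z$ is contained in a member of $\Jwf'_z$; together with clause~(i) it thus suffices to compare, below each $z\in A$, the basic traces on $A$ of members of $\Iwf^{\delta'}$ with the basic sets of $\bar{\Jwf}'$.

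For clause~(i) I would show that each basic set of $\Iwf^\delta$ below $\phi(z)$ pulls back along $\phi$ to the trace on $A$ of a member of $\Iwf^{\delta'}$ lying below $z$ (recall $B_\alpha\cup B_\epsilon\subseteq L^\delta$ and $\Iwf^\delta=\Iwf^{\delta'}\rest L^\delta$): singletons are immediate; an initial segment of $L^\delta$ pulls back to a downward-closed subset of $A$, which since $|A|<\theta$ is the trace of an initial segment of $L^{\delta'}$, and by convexity the bound can be taken below the first coordinate of $z$; and a set $J^\delta_x$ with $x$ relevant pulls back to a convex subset of $A$ which, using property (**), the root $R$, and the explicit form of $x_\nu$, is the trace of some $J^{\delta'}_{x'}$ (with $x'$ obtained from $x$ by the insertion of the fresh coordinate $\gamma^*$, resp.\ $\gamma$, dictated by the construction) or of an initial segment. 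For clause~(ii) one argues in the opposite direction: a $\delta'$-basic set below $z$ is a singleton, an initial segment (handled as before), or a set $J^{\delta'}_{x'}$ with $x'$ relevant in $L^{\delta'}$, and if $x'$ uses neither of the fresh coordinates $\gamma,\gamma^*$ it already ``lives in $L^\delta$'' and its trace on $A$ is supplied by $\bar{\Jwf}'$.

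The main obstacle, and the only genuinely new point, is the case of $J^{\delta'}_{x'}$ where $x'$ actually involves $\gamma$ or $\gamma^*$. Here I would use the two facts built into the construction: $\gamma\geq\lambda>\theta=\tau$, so $\gamma$ cannot occur at an even coordinate $\geq 2$ contributing to $r_{x'}$; and $\gamma^*\in S_{\rho_0}$ with $\rho_0$ chosen strictly above $\beta^\Sigma(y(k))$ for every $y$ in the $\Delta$-system and every negative coordinate $y(k)$, so that the decreasing-$\beta^\Sigma$ requirement in the definition of relevance forces $\gamma^*$ to occur as the leftmost element contributing to $r_{x'}$. Combining these constraints with the explicit shape of $x_\nu$, with property (**), and with the fact that $B_\alpha\cap B_\epsilon\subseteq R$ for the indices $\alpha\in[\alpha_\epsilon,\theta)$ in play, one checks that $J^{\delta'}_{x'}\cap A$ is empty, a singleton, or contained in a basic set already present in $\bar{\Jwf}'$ --- in particular no such trace can split nontrivially between the $B_\nu$-part and the $B_\epsilon$-part of $A$ away from the common root $R$. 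This is exactly the step in which the earlier choices (a single fresh ordinal $\gamma$, its conjugate $\gamma^*$ placed in the block $S_{\rho_0}$ above the $\beta^\Sigma$-value of every negative coordinate occurring in the $\Delta$-system, and the separation property (**) of $B_\epsilon$ relative to $B_{\alpha_\epsilon}$) are spent, and the verification is the direct adaptation to $A=B_\nu\cup B_\epsilon$ of the corresponding argument in \cite[Thm.~3.3]{brendle02} and in Claim~\ref{claimInnoc1}.
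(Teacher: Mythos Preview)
Your approach is essentially the same as the paper's, which simply defers to the argument in \cite[Thm.~3.3]{brendle02}; you have spelled out the adaptation in appropriate detail, correctly isolating the roles of property~(**), of $\gamma^*\in S_{\rho_0}$, and of $B_\alpha\cap B_\epsilon\subseteq R$. One slip: you write ``$\gamma\geq\lambda>\theta=\tau$'', but in fact $\gamma<\lambda$ by construction; the inequality you actually need, $\gamma\geq\tau$, follows instead from $\gamma>\delta\geq\theta=\tau$.
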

\begin{proof}
   Similar to the argument in \cite[Thm 3.3]{brendle02}.
\end{proof}

Therefore, by Lemma \ref{InnEqv} and items (i*)-(iv*), (viii*) and (ix*), there is a forcing isomorphism $\Phi:\Por'\rest(B_\nu\cup B_\epsilon)\to\Por\rest(B_\alpha\cup B_\epsilon)$ lifting $\phi$, moreover, $\Phi\rest(\Por'\rest B_\nu)=\Phi_{\nu,\alpha}$ and $\Phi\rest(\Por'\rest B_\epsilon)$ is the identity map (these by uniqueness in Lemma \ref{InnEqv}) so $\dot{a}_\nu$ is identified with $\dot{a}_\alpha$ via $\Phi$ and $\dot{a}_\epsilon$ is identified with itself. As $\Vdash_{\Por\rest(B_\alpha\cup B_\epsilon)}|\dot{a}_\alpha\cap\dot{a}_{\epsilon}|<\aleph_0$ we conclude that $\Vdash_{\Por'\rest(B_\nu\cup B_\epsilon)}|\dot{a}_\nu\cap\dot{a}_{\epsilon}|<\aleph_0$.
\end{proof}

As a consequence of the previous proof, we obtain

\begin{corollary}\label{preapprExt}
   Let $\delta <\lambda$ and $\Por\rest \la L^\delta,\bar{\mathcal{I}}^\delta\ra$ a pre-appropriate iteration. Then there is $\delta^\prime$, $\delta <\delta^\prime <\lambda$ and an appropriate iteration $\Por'=\Por'\rest \la L^{\delta^\prime},\bar{\mathcal{I}}^{\delta^\prime}\ra$ such that $\Por'\restrict L^\delta = \Por\restrict L^\delta$.
\end{corollary}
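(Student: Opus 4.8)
This is a simplification of the proof of the Main Lemma: since there is now no prescribed almost disjoint family whose maximality must be destroyed, the $\Delta$-system, Erd\H{o}s--Rado and innocuous-extension part of that proof disappears, and it suffices to carry out directly on $L^{\delta'}$ the ``book-keeping part'' of the construction. Concretely, I would fix a successor ordinal $\delta'$ with $\delta<\delta'<\lambda$ and an ordinal $\gamma$ with $\delta\leq\gamma<\delta'$; then $\gamma$ is a legitimate coordinate of $L^{\delta'}$, $\la\eta,\gamma,\gamma,\gamma\ra\notin L^\delta$ for every $\eta<\lambda\cdot\mu$, $L^\delta\subseteq L^{\delta'}$, and $\bar{\Iwf}^{\delta'}\restrict L^\delta=\bar{\Iwf}^\delta$ by Lemma \ref{restTemp}. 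Exactly as in the Main Lemma, set $L^{\delta'}_F=L^\delta_F\cup\{\la\eta,\gamma,\gamma,\gamma\ra:\eta\in\lambda\cdot\mu,\ \eta\equiv0\bmod3\}$ and likewise $L^{\delta'}_R$, $L^{\delta'}_A$ by adjoining the coordinates with $\eta\equiv1$, resp.\ $\eta\equiv2$, mod $3$; put $L^{\delta'}_C=L^{\delta'}_F\cup L^{\delta'}_R\cup L^{\delta'}_A$ and $L^{\delta'}_H=L^{\delta'}\menos L^{\delta'}_C$, which contains $L^\delta_H$ (the only members of $L^{\delta'}_C$ lying in $L^\delta$ are those of $L^\delta_C$). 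Fix a bijection $g\colon\lambda\to\lambda\times\theta$ and, for each $\zeta<\mu$, an enumeration $\{C_{\zeta,\beta}:\beta<\lambda\}$ of $[L^{\delta'}_{\lambda\cdot\zeta}]^{<\theta}$ (using $\lambda^{<\lambda}=\lambda$); set $C^{\delta'}_x=C^\delta_x$ for $x\in L^\delta_C$, and $C^{\delta'}_x=C_{\zeta,(g(\varrho))_0}$ for $x=\la\eta,\gamma,\gamma,\gamma\ra$ with $\eta=\lambda\cdot\zeta+3\varrho+i$, $\zeta<\mu$, $\varrho<\lambda$, $i<3$.

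Next I would build the $(\la\Dor\ra,\theta)$-standard iteration $\Por'\restrict\la L^{\delta'},\bar{\Iwf}^{\delta'}\ra$ by recursion on $\Dp^{\bar{\Iwf}^{\delta'}}$ (Theorem \ref{TempIt}, Definition \ref{DefStandardIt}), with the iterands prescribed by Definition \ref{DefAppr}(4)--(7): full Hechler on $L^{\delta'}_H$; on $x\in L^\delta_C$ the support $C^\delta_x$ and the name $\dot F^\delta_x$, resp.\ the subalgebra of $\Bor$, resp.\ the $\sigma$-linked subposet of $\Loc$, already carried by $\Por$; and on the new coordinates $x=\la\eta,\gamma,\gamma,\gamma\ra$ the book-kept posets of clauses (v*)--(vii*) of the Main Lemma (for $\eta\equiv0$ enumerate all nice $\Por'\restrict C_{\zeta,\beta}$-names for filter bases of size $<\theta$ and let $\Qnm_x=\Mor_{\dot F'_{\zeta,g(\varrho)}}$; for $\eta\equiv1$ use all such names for subalgebras of $\Bor$ of size $<\theta_1$; for $\eta\equiv2$ use all such names for $\sigma$-linked subposets of $\Loc$ of size $<\theta_0$). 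Since $\bar{\Iwf}^{\delta'}\restrict L^\delta=\bar{\Iwf}^\delta$ and, on coordinates in $L^\delta$, the prescribed iterands agree with those of $\Por$ (for $L^\delta_H$ because Hechler is Suslin, for $L^\delta_C$ because $C^\delta_x\subseteq L^\delta$), an induction on $\Dp$ using Theorem \ref{TempIt}(b),(f) and Lemma \ref{UpsilonTemp}(d) yields $\Por'\restrict X=\Por\restrict X$ for all $X\subseteq L^\delta$, in particular $\Por'\restrict L^\delta=\Por\restrict L^\delta$, which is (a); and $\Por'\restrict L^{\delta'}$ is ccc by Lemma \ref{CondSupp}.

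Finally I would verify appropriateness. Clauses (1)--(7) of Definition \ref{DefAppr} hold by construction: $\la L^{\delta'},\bar{\Iwf}^{\delta'}\ra$ is Shelah's template of width $\delta'$; $L^{\delta'}_H\cap\lambda\cdot\mu\supseteq L^\delta_H\cap\lambda\cdot\mu$ is cofinal in $\lambda\cdot\mu$ of size $\lambda$; $L_S=L_H$ carries full Hechler; and the supports and iterands on $L_F,L_R,L_A$ have the prescribed form and size bounds (supports and filter bases of size $<\theta$, random subalgebras of size $<\theta_1$, localization subposets of size $<\theta_0$), inherited from $\Por$ on old coordinates and imposed on the new ones. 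For (8): given a $\Por'\restrict L^{\delta'}$-name $\dot F$ for a filter base of size $<\theta$, Lemma \ref{general_properties} and the regularity of $\theta$ give $C'\subseteq L^{\delta'}$ of size $<\theta$ with $\dot F$ a $\Por'\restrict C'$-name for such a filter base; since $\lambda\cdot\mu=\bigcup_{\zeta<\mu}\lambda\cdot\zeta$ there is $\zeta<\mu$ with $C'\subseteq L^{\delta'}_{\lambda\cdot\zeta}$, say $C'=C_{\zeta,\beta}$, whence $\dot F$ is forced equal to $\dot F'_{\zeta,\beta,\alpha}$ for some $\alpha<\theta$, so $\dot F=\dot F_z$ with $z=\la\lambda\cdot\zeta+3\,g^{-1}(\beta,\alpha),\gamma,\gamma,\gamma\ra\in L^{\delta'}_F$; clauses (9) and (10) are identical. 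This produces the required appropriate $\Por'$ with $\Por'\restrict L^\delta=\Por\restrict L^\delta$. I do not expect a genuine obstacle here --- this is precisely the ``easy half'' of the Main Lemma's argument --- the only points needing (routine) care being that the freshly added coordinates all lie outside $L^\delta$, so that the new iteration restricts to exactly $\Por$ there, and that the book-keeping over the sets $C_{\zeta,\beta}$ genuinely catches every relevant name of the prescribed size, which is where $\lambda^{<\lambda}=\lambda$ and $\theta^{<\theta}=\theta$ enter.
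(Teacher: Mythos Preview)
Your proposal is correct and takes essentially the same approach as the paper: the paper's proof is literally one sentence saying to choose $\delta<\gamma<\delta'<\lambda$ and repeat the construction of the Main Lemma while ignoring everything related to $B_\nu$, $B_0$, $\phi_{0,\nu}$ and $\Phi_{\nu,0}$, which is exactly the book-keeping extension you spell out in detail. The only cosmetic difference is that the paper takes $\gamma$ strictly above $\delta$ (as in the Main Lemma), whereas you allow $\gamma=\delta$; this is harmless since $\la\eta,\delta,\delta,\delta\ra\notin L^\delta$ anyway.
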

\begin{proof}
   Choose any $\gamma$, $\delta<\gamma<\lambda$ and let $\delta'$ be any ordinal strictly between $\gamma$ and $\lambda$. $\Por'=\Por'\rest \la L^{\delta^\prime},\bar{\mathcal{I}}^{\delta^\prime}\ra$ is defined exactly as in the previous proof (just ignore anything related to $B_\nu$, $B_0$, $\phi_{0,\nu}$ and $\Phi_{\nu,0}$).
\end{proof}


\begin{proof}[Proof of the Main Theorem]
Fix a bookkeeping function $h: \lambda\to\lambda\times\lambda$ such that $h$ is a bijection and for all $\alpha\in\lambda$ if $h(\alpha)=(\xi,\eta)$ then $\alpha\geq\xi$. By recursion, we define a sequence $\la\Por^\alpha\rest\la L^{\delta_\alpha},\bar{\Iwf}^{\delta_\alpha}\ra\ra_{\alpha\leq\lambda}$ of appropriate iterations as follows.

\bigskip
\noindent
{\emph{Basic step $\alpha=0$.}} Let $\Por\rest\la L^{\theta^+},\bar{\Iwf}^{\theta^+}\ra$ be a pre-appropriate iteration with $L^{\theta^+}_H=L^{\theta^+}$ (that is, $\Dor$ is used everywhere). By Corollary \ref{preapprExt}, find $\delta_0\in(\theta^+,\lambda)$ and an appropriate iteration $\Por^0\rest\la L^{\delta_0},\bar{\Iwf}^{\delta_0}\ra$.

\bigskip
\noindent
{\emph{Successor step.}} Let  $\la \dot{\mathcal{A}}_{\alpha,\eta}:\eta<\lambda\ra$ enumerate all (nice) $\Por^\alpha\rest L^{\delta_\alpha}$-names of almost disjoint families of size in $[\theta^+,\lambda)$ (such enumeration has size $\lambda$ because $\lambda^{<\lambda}=\lambda$ and $|\Por^{\delta_\alpha}\rest L^{\delta_\alpha}|=\lambda$). By the Main Lemma, we can find $\delta_{\alpha+1}\in(\delta_\alpha,\lambda)$ and an appropriate iteration $\Por^{\alpha+1}\rest \la L^{\delta_{\alpha+1}},\bar{\mathcal{I}}^{\delta_{\alpha+1}}\ra$ such that $\Vdash_{\Por^\alpha\rest L^{\delta_{\alpha+1}}} ``\dot{\mathcal{A}}_{h(\alpha)}\;\text{is not maximal}"$ ($\dot{\Awf}_{h(\alpha)}$ has already been defined because $\xi\leq\alpha$ when $h(\alpha)=(\xi,\eta)$).

\bigskip
\noindent
{\emph{Limit step}}. Let $\delta=\sup_{\xi<\alpha} \{\delta_\xi\}$ so $L^\delta=\bigcup_{\xi<\alpha} L^{\delta_\xi}$. If $\alpha<\lambda$ then $\delta<\lambda$, but $\alpha=\lambda$ implies $\delta=\lambda$. $\mathcal{I}^{\delta_\xi}_x=\mathcal{I}^\delta_x\rest L^{\delta_\xi}$ for any $x\in L^{\delta_\xi}$ and $\xi<\alpha$ by Lemma \ref{restTemp}. Let $L_H^\delta=\bigcup_{\xi<\alpha} L_H^{\delta_\xi}$, $L^\delta_F=\bigcup_{\xi<\alpha} L_F^{\delta_\xi}$ and likewise for $L_R^\delta$ and $L_A^\delta$. In addition, for every $x\in L^\delta_C$ we can find $\xi<\alpha$ such that $x\in L^{\delta_\xi}_C$. Then define $C^\delta_x= C^{\delta_\xi}_x$ and $\Qnm^\delta_x=\Qnm^{\delta_\xi}_x$, which does not depend on the choice of $\xi$. This allows us to define a pre-appropriate iteration $\hat{\Por}\rest\la L^\delta,\bar{\mathcal{I}}^\delta\ra$ such that $\hat{\Por}\rest L^{\delta_\xi} = \Por^\xi\rest L^{\delta_\xi}$ for any $\xi<\alpha$. It is clear that the iteration $\hat{\Por}\rest\la L^\delta,\bar{\mathcal{I}}^\delta\ra$ is appropriate when $\mathrm{cf}(\alpha)\geq\theta$, in which case $\delta_\alpha=\delta$ and $\Por^\alpha\rest\la L^{\delta_\alpha},\bar{\mathcal{I}}^{\delta_\alpha}\ra=\hat{\Por}\rest\la L^\delta,\bar{\mathcal{I}}^\delta\ra$, moreover, this is the direct limit of $\Por^{\xi}\rest L^{\delta_\xi}$ for $\xi<\alpha$ since any condition $p\in \Por^\alpha\rest L^{\delta_\alpha}$ is restricted to a subset of size $<\theta$ by Lemma \ref{general_properties}; if $\mathrm{cf}(\alpha)<\theta$ we just find $\delta_\alpha\in(\delta,\lambda)$ and an appropriate iteration $\Por^\alpha\rest\la L^{\delta_\alpha},\bar{\Iwf}^{\delta_\alpha}\ra$ such that $\Por^\alpha\rest L^{\delta}=\hat{\Por}\rest L^\delta$ by Corollary \ref{preapprExt}.

\bigskip
As $\Por^\lambda\rest \la L^\lambda,\mathcal{I}^\lambda\ra$ is an appropriate iteration, by Lemma \ref{ApprLemma} we only need to show that $\Vdash_{\Por^\lambda\rest L^\lambda}\mathfrak{a}\notin[\theta^+,\lambda)$ (because $\Por^\lambda\rest L^\lambda$ already forces $\bfrak=\mu\geq\theta^+$ and $\bfrak\leq\afrak$ is probable in ZFC). Let $\dot{\Awf}$ be a $\Por^\lambda\rest L^\lambda$-name for an almost disjoint family of size in $[\theta^+,\lambda)$ (by ccc-ness, this size can be decided). As $\Por^\lambda\rest L^\lambda$ is the direct limit of $\Por^{\delta_\alpha}\rest L^{\delta_\alpha}$ for $\alpha<\lambda$, we can find $\xi,\eta<\lambda$ such that $\dot{\Awf}=\dot{\Awf}_{\xi,\eta}$ so, if $h(\alpha)=(\xi,\eta)$ then $\Por^{\alpha+1}\rest L^{\delta_{\alpha+1}}$ already forces that $\dot{\Awf}$ is not maximal.
\end{proof}



\section{Questions}\label{SecDisc}

J. Brendle~\cite{brendle03} modified Shelah's original template iteration technique to incorporate a product-like forcing as a complete suborder of the entire template iteration. This modified template iteration produces the consistency of $\mathfrak{a}$ being of countable cofinality.
Recently, the first author jointly with A. T\"ornquist (see~\cite{VFAT}) showed that the minimal size of a maximal cofinitary group $\mathfrak{a}_g$, as well as some other close combinatorial relatives of the almost disjointness number, like $\mathfrak{a}_p$, $\mathfrak{a}_e$, can be of countable cofinality. Of interest remains the following question:

\begin{question}
Can the iteration techniques developed in this paper be further developed to expand the results by including the case in which $\mathfrak{a}$, $\mathfrak{a}_g$, $\mathfrak{a}_p$ or $\mathfrak{a}_e$ are singular, or even of countable cofinality?
\end{question}

The iteration of eventually different forcing along Shelah's original template produces the consistency of $\mathfrak{a}=\aleph_1<\hbox{non}(\mathcal{M})<\mathfrak{a}_g$ (see~\cite[Thm. 4.11]{brendle02}). It is unknown whether this consistency result could be improved as follows.

\begin{question}
Is it consistent that $\aleph_1<\mathfrak{a}<\hbox{non}(\mathcal{M})<\mathfrak{a}_g$?
\end{question}

In his work on template iterations, Shelah \cite{shelah04} (see also \cite{brendle07}) also constructed, using a measureable cardinal $\kappa$, a ccc poset that forces $\kappa<\ufrak<\afrak$. As this poset also forces $\ufrak=\bfrak=\sfrak$, the consistency of $\bfrak=\sfrak<\afrak$ is clear modulo a measurable cardinal. However, it is not known whether these consistency results can be obtained from ZFC alone.

\begin{question}\label{Qb=s<a}
   Is it consistent with ZFC alone that
   \begin{enumerate}[(1)]
      \item $\bfrak=\sfrak<\afrak$?
      \item $\bfrak=\sfrak=\aleph_1<\afrak=\aleph_2$? (see \cite{BrRa}).
      \item $\ufrak<\afrak$?
   \end{enumerate}
\end{question}

Question \ref{Qb=s<a}(2) is a very important and challenging problem. It is closely related to the famous Roitman's problem (still open) on whether ``$\dfrak=\aleph_1$ implies $\afrak=\aleph_1$" is provable in ZFC.




\end{document}